\begin{document}

\newtheorem{lemma}{Lemma}[section]
\newtheorem{corollary}{Corollary}[section]
\newtheorem{theorem}{Theorem}[section]
\newtheorem{definition}{Definition}[section]
\newtheorem{note}{Note}[section]

\begin{center}
{\bf \Large The Critical Locus for Complex H\'{e}non Maps.}\medskip

Tanya Firsova
\end{center}

\begin{abstract} We give a topological model of the critical locus for
complex H\'{e}non maps that are perturbations of the quadratic
polynomial with disconnected Julia set.
\end{abstract}

\def\IMSmarkvadjust{0 pt}
\def\IMSmarkhadjust{0 pt}
\def\IMSmarkhpadding{0 pt}
\def\IMSpubltext{Published in modified form:}
\def\SBIMSMark#1#2#3{
 \font\SBF=cmss10 at 10 true pt
 \font\SBI=cmssi10 at 10 true pt
 \setbox0=\hbox{\SBF \hbox to \IMSmarkhpadding{\relax}
                Stony Brook IMS Preprint \##1}
 \setbox2=\hbox to \wd0{\hfil \SBI #2}
 \setbox4=\hbox to \wd0{\hfil \SBI #3}
 \setbox6=\hbox to \wd0{\hss
             \vbox{\hsize=\wd0 \parskip=0pt \baselineskip=10 true pt
                   \copy0 \break%
                   \copy2 \break%
                   \copy4 \break}}
 \dimen0=\ht6   \advance\dimen0 by \vsize \advance\dimen0 by 8 true pt
                \advance\dimen0 by -\pagetotal
	        \advance\dimen0 by \IMSmarkvadjust
 \dimen2=\hsize \advance\dimen2 by .25 true in
	        \advance\dimen2 by \IMSmarkhadjust

%
%
  \openin2=publishd.tex
  \ifeof2\setbox0=\hbox to 0pt{}
  \else 
     \setbox0=\hbox to 3.1 true in{
                \vbox to \ht6{\hsize=3 true in \parskip=0pt  \noindent  
                {\SBI \IMSpubltext}\hfil\break
                \input publishd.tex 
                \vfill}}
  \fi
  \closein2
  \ht0=0pt \dp0=0pt
 \ht6=0pt \dp6=0pt
 \setbox8=\vbox to \dimen0{\vfill \hbox to \dimen2{\copy0 \hss \copy6}}
 \ht8=0pt \dp8=0pt \wd8=0pt
 \copy8
 \message{*** Stony Brook IMS Preprint #1, #2. #3 ***}
}

\SBIMSMark{2011/1}{February 2011}{}

\section{Introduction. Foliations.}\label{introduction}

The family of H\'{e}non mappings is a basic example of nonlinear
dynamics. Both real and complex versions of these maps were
extensively studied, but still there is a great deal of unknown
about them.

In this article we study complex quadratic H\'{e}non mappings.
These are maps of the form:

$$f_a(x)=\left(\begin{array}{c} x^2+c-ay \\ x \end{array}\right).$$


\noindent Note that these maps are biholomorphisms of $\mathbb
C^2$. They have constant Jacobian, which equals to the parameter
$a$. As $a\to 0$, H\'{e}non mappings degenerate to quadratic
polynomial maps $x\mapsto x^2+c$, which act on the parabola
$x=y^2+c$. The H\'{e}non mappings that we study are perturbations
of quadratic polynomials with disconnected Julia set.

In analogy with one-dimensional dynamics, Hubbard and
Oberste-Vorth \cite{HOV1} introduced the functions $G_a^+$ and
$G_a^-$ that measure the growth rate of the forward and backwards
iterations of the orbit. These functions are pluriharmonic on the
set of points $U_a^+$, $U_a^-$, whose orbits tend to infinity
under forward and backwards iterates. Their level sets are
foliated by Riemann surfaces. These natural foliations ${\cal
F}_a^+$ and ${\cal F}_a^-$ were introduced and extensively studied
in \cite{HOV1}.






The foliations ${\cal F}_a^+, {\cal F}_a^-$ can also be
characterized in terms of B\"{o}ttcher coordinates. There are maps
$\phi_{a,+}$ and $\phi_{a,-}$ that semiconjugate the map $f_a$ and
the map $f_a^{-1}$ to $z\mapsto z^2$ and $z\mapsto z^2/a$
correspondingly. These functions were constructed in \cite{HOV1}.
We recall the definitions of these functions and list their
properties in Section \ref{bottcher_coordinates}. The level sets
of $\phi_{a,+}$ and $\phi_{a,-}$ define ${\cal F}_a^+$ and ${\cal
F}_a^-$ in some domains near the infinity and can be propagated by
dynamics to all of $U_a^+$ and $U_a^-$ correspondingly.

The dynamical description of these foliations is the following:






\begin{lemma}[\cite{BS4}] The leaves of ${\cal F}_a^+$ are
``super-stable manifolds of $\infty$"\ , i.e. if $z_1,z_2$ belong
to the same leaf, then $d(f_a^n(z_1), f_a^n(z_2))\to 0$
super-exponentially, where $d$ is Euclidean distance in $\mathbb
C^2$. If $z_1,z_2\in U_a^+$ do not belong to the same leaf, then
$d(f_a^n(z_1), f_a^n(z_2))\not \to 0$.
\end{lemma}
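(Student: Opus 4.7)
My plan is to exploit the B\"ottcher coordinate $\phi_{a,+}$ recalled in Section~\ref{bottcher_coordinates}, which satisfies $\phi_{a,+}\circ f_a=\phi_{a,+}^2$ and hence $\phi_{a,+}(f_a^n(z))=\phi_{a,+}(z)^{2^n}$. On a region $V$ near infinity of the form $\{|x|>R,\ |x|>|y|\}$, solving the functional equation order by order gives the expansion $\phi_{a,+}(x,y)=x+(c-ay)/(2x)+O(|x|^{-3})$. Every $z\in U_a^+$ eventually enters $V$, and both alternatives in the lemma are preserved under replacing $z_i$ by $f_a^N(z_i)$, so I may assume $z_1,z_2\in V$ throughout.

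\smallskip
\noindent\textbf{Same-leaf case.} Set $w:=\phi_{a,+}(z_1)=\phi_{a,+}(z_2)$ with $|w|>1$, and let $(x_i^{(n)},y_i^{(n)}):=f_a^n(z_i)$, so both points lie on the level set $\{\phi_{a,+}=w^{2^n}\}$. Inverting the expansion by the implicit function theorem gives $|\partial x/\partial y|_{\phi_{a,+}=\mathrm{const}}=|a|/(2|x|)+O(|x|^{-3})$, and since $|x|\asymp|w|^{2^n}$ on this level set we obtain $|x_1^{(n)}-x_2^{(n)}|\le C|w|^{-2^n}|y_1^{(n)}-y_2^{(n)}|$. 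The second coordinate of $f_a$ is the first coordinate of its argument, so $y_i^{(n)}=x_i^{(n-1)}$. Writing $\delta_n:=|x_1^{(n)}-x_2^{(n)}|$ yields the recursion $\delta_n\le C|w|^{-2^n}\delta_{n-1}$, and iterating gives $\delta_n\le C^n|w|^{-(2^{n+1}-2)}\delta_0$. This decays super-exponentially and controls the Euclidean distance.

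\smallskip
\noindent\textbf{Different-leaf case.} Assume $\phi_{a,+}(z_1)\neq\phi_{a,+}(z_2)$. If $|\phi_{a,+}(z_1)|\neq|\phi_{a,+}(z_2)|$, then $G_a^+(f_a^n(z_i))=2^n G_a^+(z_i)$ separate at a doubly-exponential rate; since $G_a^+\sim\log|x|$ in $V$, the $x$-coordinates live on vastly different scales and $d(f_a^n(z_1),f_a^n(z_2))\to\infty$. Otherwise the two B\"ottcher values differ by a pure phase $e^{i\theta}$ with $\theta\ne 0$, and
$$\phi_{a,+}(f_a^n(z_1))-\phi_{a,+}(f_a^n(z_2))=\phi_{a,+}(z_2)^{2^n}\bigl(e^{i2^n\theta}-1\bigr).$$
The parenthesized factor stays bounded below along a subsequence (since $2^n\theta\not\to 0\pmod{2\pi}$ whenever $\theta$ is not a dyadic multiple of $2\pi$), while $|\phi_{a,+}(z_2)^{2^n}|\ge 1$; combined with $x=\phi_{a,+}(1+o(1))$, this forces $|x_1^{(n)}-x_2^{(n)}|\gtrsim 1$ along that subsequence, so $d(f_a^n(z_1),f_a^n(z_2))\not\to 0$.

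\smallskip
\noindent\textbf{Main obstacle.} The delicate step is the uniform one-step contraction in the same-leaf case: one must show that the constant $C$ and the region $V$ can be chosen so that $\delta_n\le C|w|^{-2^n}\delta_{n-1}$ holds at every step of the forward orbit, which rests on uniform control of the error terms in the B\"ottcher expansion along the whole escaping orbit. The angular argument in the different-leaf case has a borderline when $\theta$ is a dyadic multiple of $2\pi$; such pairs merge into the same leaf after finitely many iterations, and the statement is naturally read with leaves identified with super-stable classes.
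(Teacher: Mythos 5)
The paper does not prove this lemma; it is cited from Bedford--Smillie \cite{BS4}, so there is no in-paper argument to compare against. Your sketch via the B\"ottcher coordinate $\phi_{a,+}$ is the natural route, and the structure is sound. Two points worth tightening. First, in the same-leaf reduction you need that after enough iterates the two points land on the same \emph{local} level set $\{\phi_{a,+}=w\}$ in $V_+$, not merely on the same global leaf: a global leaf may meet $V_+$ in several $\phi_{a,+}$-level components whose values differ by $2^k$-th roots of unity. This does hold, because a compact arc in the leaf joining $z_1$ and $z_2$ is eventually carried wholly into $V_+$ by $f_a^N$ and hence onto a single level set, but it should be said. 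Second, in the different-leaf equal-modulus case the needed number-theoretic fact is that a non-dyadic $t\in[0,1)$ has infinitely many $n$ with fractional part $\{2^n t\}\geq 1/4$ (so $|e^{i2^n\theta}-1|$ is bounded below along a subsequence), and the dyadic case cannot actually arise here: by the same-leaf criterion above, a common $\phi_{a,+}$-value after finitely many iterates would put $z_1,z_2$ on one leaf, so your closing remark about ``super-stable classes'' is unnecessary once the reduction is made precise. The uniform one-step contraction $\delta_n\leq C|w|^{-2^n}\delta_{n-1}$ that you flag as the main obstacle is indeed where the quantitative work lies; it comes down to choosing $\alpha$ (hence $V_+$) large enough that the error in the B\"ottcher expansion is controlled uniformly along the whole escaping orbit, which is exactly the content of the telescopic estimates in Section~\ref{bottcher_coordinates}.
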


\noindent The leaves of ${\cal F}_a^-$ are ``super-unstable
manifolds of $\infty$".

One would like to think of these foliations as coordinates in
$U_a^+\cap U_a^-$. However, for all H\'{e}non mappings there is a
codimension one subvariety of tangencies between ${\cal F}_a^+$
and ${\cal F}_a^-$\cite{BS4}.

\begin{definition} The critical locus ${\cal C}_a$ is the set of tangencies
between foliations ${\cal F}_a^+$ and ${\cal F}_a^-$.
\end{definition}

\noindent Thus, the critical locus is the set of ``heteroclinic
tangencies"\ between the ``super-stable"\ and ``super-unstable"\
manifolds.

We give an explicit description of the critical locus for
H\'{e}non mappings $$(x,y)\mapsto(x^2+c-ay,x),$$ \noindent where
$x^2+c$ has disconnected Julia set, and $a$ is sufficiently small.
The topological model of the critical locus for such H\'{e}non
maps was conjectured by John Hubbard. We justify his picture.

Lyubich and Robertson (\cite{LR}) gave the description of the
critical locus for H\'{e}non mappings $$(x,y)\mapsto
(p(x)-ay,x),$$ \noindent where $p(x)$ is a hyperbolic polynomial
with connected Julia set, $a$ is sufficiently small. They showed
that for each critical point $c$ of $p$ there is a component of
the critical locus that is asymptotic to the line $y=c$. Each
component of the critical locus is an iterate of these ones, and
each is a punctured disk.

They used critical locus to show that a pair of quadratic
H\'{e}non maps of the studied type, taken along with the natural
foliations, gives a rigid object. This means that if a conjugacy
between two H\'{e}non maps sends the natural foliations of the
first map to the natural foliations of the second map then the two
H\'{e}non maps are conjugated by a holomorphic or antiholomorphic
affine map.

Since the work of Lyubich and Robertson \cite{LR} is unpublished,
we include the proof of all the results that we are using.

Some of the sources of the fundamental results about H\'{e}non
mappings are \cite{BLS}, \cite{BS1}, \cite{BS2}, \cite{BS3},
\cite{BS4}, \cite{BS5}, \cite{BS6}, \cite{HOV1}, \cite{HOVII},
\cite{FM}, \cite{FS}.

\subsection{Acknowledgements.} The author is grateful to Mikhail
Lyubich for statement of the problem, numerous discussions and
useful suggestions. We are thankful to John Hubbard for showing
the conjectural picture of the critical locus.

\section{Topological model for the critical locus.}

In our case truncated spheres serve as building blocks for the
critical locus. Consider a sphere $S$ and a pair of disjoint
Cantor sets $\Sigma, \Omega \subset S$. The elements of $\Sigma$,
$\Omega$ can be parametrized by one-sided infinite sequences of
$0,1$'s. Denote by $\sigma_{\alpha}\in \Sigma$,
$\omega_{\alpha}\in\Omega$ elements parametrized by a sequence
$\alpha$.

Let $\alpha_n$ be a $n$-string of $0$'s and $1$'s. For each
$\alpha_n$, $n\in \mathbb N\cup \{0\}$, take a disk
$V_{\alpha_n}\subset S\backslash\left(\Sigma\cup \Omega\right)$
with the smooth boundary. (Let $V$ denote the disk that
corresponds to an empty sequence.) We require that $V_{\alpha_n}$
are disjoint. Moreover, $V_{\alpha_n}$ converge to
$\sigma_{\alpha}$, where $\alpha_n$ is the string of the first $n$
elements of $\alpha$.

\begin{figure}[h]
\centering
\psfrag{sigma}{$\Sigma$}
\includegraphics[height=3.5cm]{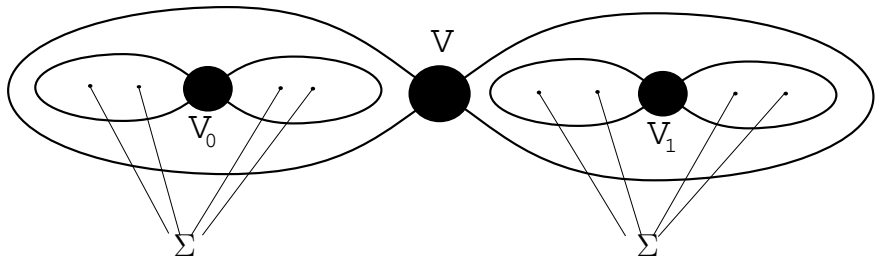}
\caption{The geometry of a truncated sphere}
\end{figure}

$U_{\alpha_n}$ play the same role for $\Omega$ as $V_{\alpha_n}$
for $\Sigma$.

We assume that there is a fixed homeomorphism $\tilde{h}$ between
the boundaries of $V_{\alpha_n}$ and $U_{\alpha_n}$.

Let $p\in S$ be a point.

We say that $S\backslash\left(\Sigma\cup\Omega\cup\sum_{\alpha_n}
[U_{\alpha_n}\cup V_{\alpha_n}]\cup p\right)$ is a truncated
sphere.

Note that all truncated spheres are homeomorphic one to another.

\begin{theorem}\label{theorem_main} Suppose $x^2+c$ has disconnected Julia set. There
exists $\delta$, such that $\forall |a|<\delta$ the critical locus
of the map
$$
f_a\left(\begin{array}{c}x \\y \end{array}\right)=
\left(\begin{array}{c}x^2+c-ay \\ x \end{array}\right)
$$
is smooth, and has the following topological model: take
countably many truncated spheres $S_n$, $n\in \mathbb Z$, and glue
the boundary of $V_{\alpha_n}$ on $S_k$ to the boundary of
$U_{\alpha_n}$ of $S_{n+k}$ using the homeomorphism $\tilde{h}$.
Map $f_a$ acts on the critical locus by sending $S_n$ to
$S_{n+1}$.
\end{theorem}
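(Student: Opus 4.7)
The strategy is to exploit the $f_a$-invariance of $\mathcal{C}_a$: construct a single fundamental piece $S_0\subset \mathcal{C}_a$, show it is homeomorphic to a truncated sphere, and set $S_n:=f_a^n(S_0)$. Since $f_a$ is a biholomorphism of $\mathbb{C}^2$ and $\mathcal{C}_a$ is $f_a$-invariant, the gluing homeomorphism $\tilde{h}$ of the theorem will then be automatic: the boundary of $V_{\alpha_n}\subset S_k$ coincides, under $f_a^n$, with the boundary of $U_{\alpha_n}\subset S_{n+k}$.

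The first step is quantitative control of $\mathcal{F}_a^\pm$ near infinity using the B\"ottcher coordinates $\phi_{a,\pm}$ recalled in Section \ref{bottcher_coordinates}. Writing the tangency equation $d\phi_{a,+}\wedge d\phi_{a,-}=0$ in coordinates and applying an implicit-function-theorem perturbation from the degenerate case $a=0$ should give the smoothness assertion of the theorem for $|a|<\delta$, and identify one asymptotic branch of $\mathcal{C}_a$ reaching to infinity.

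To construct $S_0$, I would fix a fundamental annulus $A\subset\mathbb{C}^*$ for $z\mapsto z^2$, set $W_0:=\phi_{a,+}^{-1}(A)$, and take $S_0:=\mathcal{C}_a\cap W_0$. The four features of a truncated sphere should then correspond to the following pieces of $S_0$. The removed point $p$ is the trace of infinity along the $\phi_{a,-}$-direction. The Cantor set $\Sigma$ is the accumulation of $S_0$ on the H\'enon Julia set $J_a$; it is a Cantor set because $c$ lies in the shift locus of $x^2+c$ and $a$ is small, so structural stability conjugates $f_a|_{J_a}$ to the shift on two symbols. The Cantor set $\Omega$ plays the symmetric role for the inverse dynamics. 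The removed disks $V_{\alpha_n}$ and $U_{\alpha_n}$ are those components of $\mathcal{C}_a\cap W_0$ which, under $n$ forward (respectively backward) iterations of $f_a$, are absorbed into $\Sigma$ (respectively $\Omega$) along the cylinder indexed by $\alpha_n$ in the Markov partition of $J_a$ inherited from the disconnected quadratic Julia set.

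The main obstacle will be this last combinatorial identification: one has to establish that the components of $\mathcal{C}_a$ in $W_0$ are indexed by finite binary strings in a nested fashion compatible with the parametrization of $\Sigma,\Omega$, and that $\tilde{h}:=f_a^n|_{\partial V_{\alpha_n}}$ is a well-defined homeomorphism onto $\partial U_{\alpha_n}\subset S_{n+k}$. This amounts to careful bookkeeping between the shift structure on the H\'enon Julia set and the component structure of the critical locus. Once this is in place, the global description $\mathcal{C}_a=\bigsqcup_n S_n/\!\sim$ follows immediately from $f_a(S_n)=S_{n+1}$.
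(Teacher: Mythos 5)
Your overall strategy is in the right spirit (build a fundamental piece $S_0\subset\mathcal{C}_a$, set $S_n=f_a^n(S_0)$, and let the dynamics supply the gluing), but there are two genuine gaps that the paper has to work hard to close and that your sketch does not address.

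First, the choice $W_0=\phi_{a,+}^{-1}(A)$ and $S_0=\mathcal{C}_a\cap W_0$ does not yield a truncated sphere, and the paper's fundamental domain is not of this form either. The paper takes $D=\mathcal{C}_a\cap(\Omega_a\cup\hat{\Omega}_1)$, where
$\Omega_a=\{G_a^+\leq r\}\cap\{|y|\leq\alpha\}\cap\{|p(y)-x|\geq |a|\alpha\}$ — note that this allows $G_a^+$ to run all the way down to $0$, so $D$ contains, for every $k\geq 0$ and every $\xi_k$ with $p^{\circ k}(\xi_k)=0$, a handle $H_{\xi_k}$ living at dynamical depth $G_a^+\approx r/2^k$. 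This $D$ is a fundamental domain for $f_a\vert_{\mathcal{C}_a}$ (verified via a variant of Poincar\'e's polyhedron theorem), but it is \emph{not} homeomorphic to a truncated sphere as it sits in $\mathbb{C}^2$: the handles are scattered across infinitely many depths. The essential step you omit is the dynamical regluing: cut $D$ along $|y|=\epsilon$ into a main component (the perturbation of $y=0$) and the handles $H_{\xi_k}$, then reattach each $H_{\xi_k}$ to the main component by $f_a^{k}$, so that all handles are brought to the same level. Only after this surgery does one obtain a truncated sphere with the prescribed $V_{\alpha_n}$, $U_{\alpha_n}$ decorations, and the Cantor sets $\Sigma,\Omega$ are shown to be honest Cantor sets by a moduli-of-annuli estimate, not by quoting structural stability of $f_a\vert_{J_a}$. (Also, $\Sigma$ and $\Omega$ sit on the ideal boundary corresponding to $G_a^+\to 0$ and $G_a^-\to 0$ separately, not to accumulation on $J_a=J_a^+\cap J_a^-$.)

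Second, ``applying an implicit-function-theorem perturbation from $a=0$'' cannot give smoothness. The degenerate critical locus $\mathcal{C}_0$ is the singular curve $\{y=0\}\cup\bigcup_k\{x=p^{-k}(0)\}$, with transversal double points at $(\xi_k,0)$; at these points the IFT says nothing about whether the perturbed zero set of $w_a$ is smooth or again nodal. The paper instead computes that each node has Milnor number $1$, checks via Ehresmann that the level sets $\{w_a=c\}$ fibre trivially over the parameter, and then rules out a persisting node by a geometric argument: such a node would force a point where $\mathcal{F}_a^-$ has a horizontal tangent inside $\Omega_a$, contradicting the horizontal-like control of $\mathcal{F}_a^-$ established via invariant cone fields. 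Your proposal also leaves out the detailed description of $\mathcal{F}_a^{\pm}$ in $\Omega_a$ (vertical parabolas at lines $x=\xi_k$, horizontal parabolas $p(y)-x=\mathrm{const}$), which is what produces the ``connected sum of two disks with two holes'' topology of $\mathcal{C}_a\cap\Omega_a^{\xi_n}$ and hence the handle structure that the regluing step relies on.
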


\begin{figure}[h]
\centering
\includegraphics[height=11cm]{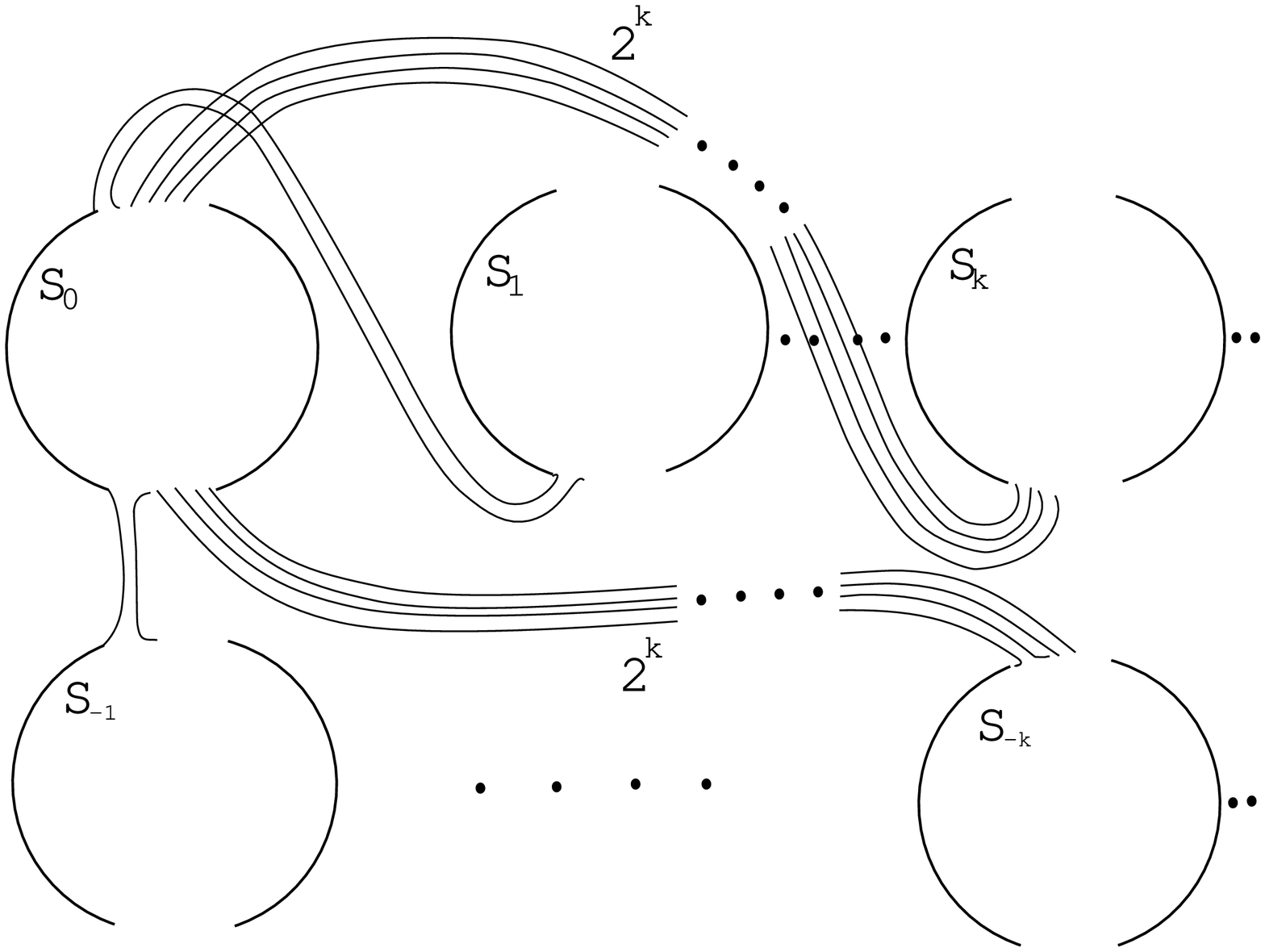}
\caption{The topological model.}
\end{figure}

\begin{note} It is easy to see that the topological model of the critical locus is
well-defined up to a homeomorphism.
\end{note}

\section{Strategy of description.}\label{strategy}

When $a=0$, the H\'{e}non mapping reduces to

$$\left(\begin{array}{c}x \\ y\end{array}\right) \mapsto \left(\begin{array}{c} x^2+c \\ x\end{array}\right)$$

\noindent which is a map $x\mapsto p(x)$ acting on the curve
$x=p(y)$.

As $a\to 0$ the map degenerates, but the foliations and the Green
functions persist and become easy to analyze.

In Section \ref{bottcher_coordinates} we study  $\phi_{a,+}$ and
$\phi_{a,-}$, paying extra attention to the degeneration as $a\to
0$.

Section \ref{sec:Green_function} is devoted to Green's functions.
We prove that $G_a^+$ and $G_a^-$ depend continuously on $x,y$ and
the parameter $a$.

In Section \ref{sec:critical_locus} we describe the foliations
${\cal F}_a^+$ and ${\cal F}_a^-$ and the critical locus $\cal C$
in terms of $\phi_{a,+}$ and $\phi_{a,-}$. We also calculate the
critical locus in the degenerate case. As $a$ deviates from zero,
we carefully describe the perturbation. The latter is done in
several steps:

First, we choose appropriate values $r$ and $\alpha$ and describe
the critical locus in the domain

$$\Omega_a = \{G_a^+\leq r\}\cap
\{|y|\leq \alpha\}\cap \{|p(y)-x|\geq |a|\alpha\}.$$

\begin{figure}[h]
\centering
\psfrag{omega}{$\Omega_a$} \psfrag{G=r}{$G_a^+=r$}
\psfrag{|a|alpha}{$|a|\alpha$}

\psfrag{|y|=alpha}{$|y|=\alpha$}

\includegraphics[height=6cm]{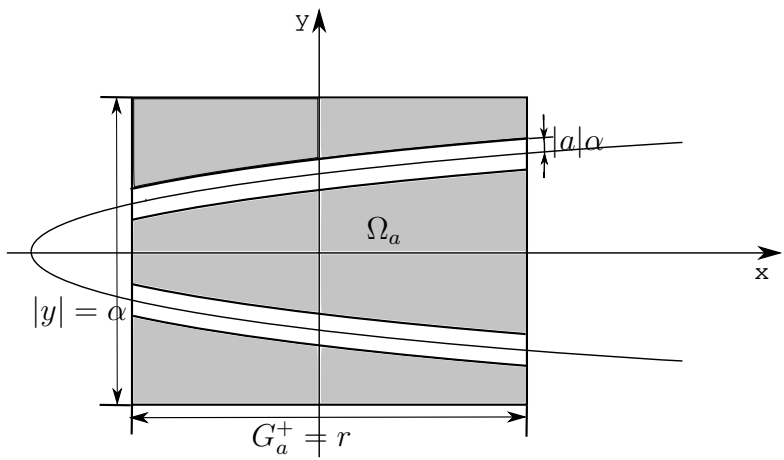}
\caption{Domain $\Omega_a$}\label{fig:Omega}
\end{figure}

We choose $r$ that lies between the critical point level and
critical value level:

$$G_p(0)<r<G_p(c),$$

\noindent where $G_p$ is the Green function of polynomial $p$.

We choose $\alpha$ such that $G_a^+|_{\{|x|>\alpha, |x|>|y|\}}>r$.
Moreover, in Section \ref{F-} we choose $\alpha$ such that leaves
of foliation ${\cal F}_a^-$ in $\Omega_a$ form a family horizontal
of parabolas.

\begin{figure}[h!]
\centering
\psfrag{y}{$y$}\psfrag{x}{$x$}\psfrag{x=xim}{$x=p^{-m}(0)$}\psfrag{x=xi1}{$x=p^{-1}(0)$}\psfrag{x=xik}{$x=p^{-k}(0)$}
\psfrag{J+}{$J_a^+$}
\includegraphics[height=6cm]{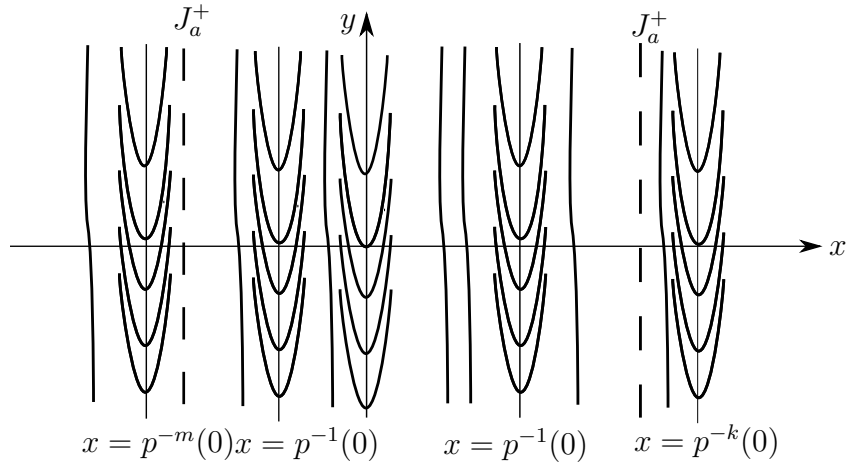}
\caption{The foliation ${\cal F}_a^+$}\label{fig:F+}
\end{figure}

In Section \ref{F+} we give a description of the foliation ${\cal
F}_a^+$ in

$$\{G_a^+\leq r\}\cap \{|y|\leq \alpha\}.$$

\noindent We show that the local leaves of ${\cal F}_a^+$ are
either vertical-like or vertical parabolas. There is exactly one
family of vertical parabolas (see Fig. \ref{fig:F+}) corresponding
to each line $x=\xi_k,$ where $p^{\circ k}(\xi_k)=0$.

The critical locus in $\Omega_a$ is the set of tangencies of a
family of horizontal parabolas with vertical-like leaves and
vertical parabolic leaves. In Section \ref{Omega} we show that for
each family of parabolas there is exactly one handle.

In Section \ref{infinity} we show that foliations ${\cal F}_a^+$
and ${\cal F}_a^-$ extend holomorphically to $x=\infty$. We show
that a point $(0,\infty)$ belongs to the extension and calculate
the tangent line to the critical locus at this point. This gives
us a description of the critical locus in
$$\{|y|<\epsilon\}\cap \{|x|>\alpha\}.$$

In Section \ref{ext-a-neighborhood} we show that the critical
locus can be extended up to $a|\alpha|$-neighborhood of parabola
$p(y)-x=\mbox{const}$ along $y=0$.

In Section \ref{final} we combine the results from the previous
sections to get the description of the fundamental domain of the
certain component of the critical locus. We rule out ghost
components. We also do a dynamical regluing of the fundamental
domain of the critical locus to obtain a description in terms of
truncated spheres.

\section{Functions $\phi_{a,+}$ and
$\phi_{a,-}$}\label{bottcher_coordinates}

In this section we construct functions $\phi_{a,+}$ and
$\phi_{a,-}$. In their description we follow \cite{HOV1}. In
description of the degeneration of $\phi_{a,+}$ and $\phi_{a,-}$
as $a\to 0$, we follow \cite{LR}.

\subsection{Large scale behavior of the H\'{e}non map}

The study of H\'{e}non mappings usually begins with introduction
of domains $V_{+}$ and $V_{-}$ which are invariant under the
action of H\'{e}non map:
$$f_{a}(V_{+})\subset V_{+}, \quad f^{-1}_{a}(V_{-})\subset V_{-}.$$

Moreover, one requires that every point that has unbounded forward
orbit eventually enters $V_{+}$ and every point that has unbounded
backward orbit enters $V_{-}$.

Fix $R$, $0<r<1$, and choose $\alpha$ so that

\begin{equation}\label{1}
\frac{|c|}{|y^2|}+\frac{R+1}{|y|}<r
\tag{\ref{bottcher_coordinates}.1}
\end{equation}

\begin{equation}\label{2}
|p(y)|>(2R+1)|y|\tag{\ref{bottcher_coordinates}.2}
\end{equation}

\noindent for all $|y|\geq \alpha$.

Consider the following partition of $\mathbb C^2:$

$$V_{+}=\{(x,y) :\ |x|> |y|, |x|> |\alpha|\};$$

$$V_{-}=\{(x,y) :\ |y|> |x|, |y|>\alpha\};$$

$$W=\{(x,y) :\ |x|\leq \alpha, |y|\leq \alpha \}.$$

\begin{figure}[h]
\centering
\psfrag{ex1}{$V_-$} \psfrag{ex2}{$V_+$} \psfrag{R}{W}

\includegraphics[height=5cm]{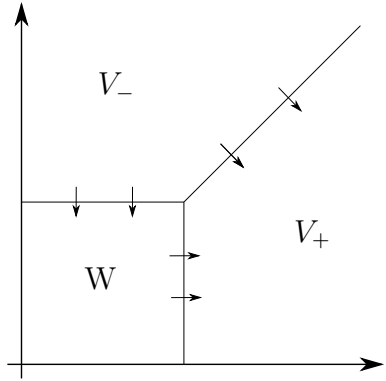}
\caption{The crude picture of the dynamics of $f_a$}
\end{figure}

\begin{lemma}\label{V+invariance} For $a\in D_R$, $f_a(V_+)\subset V_+$.
\end{lemma}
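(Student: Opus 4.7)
The plan is to verify the two defining inequalities of $V_+$ directly at $f_a(x,y)$. Take any $(x,y)\in V_+$, so $|x|>|y|$ and $|x|>\alpha$. Writing $f_a(x,y)=(x',y')$ with $x'=x^2+c-ay$ and $y'=x$, membership in $V_+$ reduces to showing $|x'|>|x|$ and $|x'|>\alpha$, and the second follows from the first together with $|x|>\alpha$, so only $|x^2+c-ay|>|x|$ needs proof.

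The first step is a triangle-inequality lower bound
\[
|x^2+c-ay|\;\geq\;|x|^2-|c|-|a|\,|y|.
\]
Since $a\in D_R$ gives $|a|<R$, and $|y|<|x|$ by assumption, we get $|a|\,|y|<R|x|$, hence
\[
|x^2+c-ay|\;\geq\;|x|^2-|c|-R|x|.
\]

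Now I would invoke the defining inequality (\ref{1}) at the point $|x|>\alpha$ (legitimate because (\ref{1}) holds for all $|y|\geq\alpha$, and here $|x|$ plays the role of that variable). Rearranging
\[
\frac{|c|}{|x|^2}+\frac{R+1}{|x|}\;<\;r\;<\;1
\]
multiplied by $|x|^2$ gives $|x|^2>(R+1)|x|+|c|$, i.e. $|x|^2-|c|-R|x|>|x|$. Combined with the previous bound, this yields $|x'|=|x^2+c-ay|>|x|>\alpha$, so $(x',y')\in V_+$.

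There is no real obstacle here; the lemma is essentially the statement that $\alpha$ was chosen precisely to make this computation work. The only mild care needed is to notice that $R$ bounds $|a|$ (via $a\in D_R$) and that the hypothesis $|x|>|y|$ from $V_+$ is exactly what lets the $|a|\,|y|$ term be absorbed into the $(R+1)/|x|$ term of (\ref{1}), leaving the remaining margin $|x|^2-(R+1)|x|-|c|>0$ to dominate $|x|$ itself.
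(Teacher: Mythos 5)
Your proof is correct and follows essentially the same route as the paper: bound $|p(x)-ay|\geq|p(x)|-|a|\,|y|$, use $|a|<R$ and $|y|<|x|$ to absorb the error term, and conclude $|x'|>|x|>\alpha$. The only small divergence is the choice of defining inequality on $\alpha$: you expand $|p(x)|\geq|x|^2-|c|$ and invoke (\ref{1}), while the paper appeals directly to (\ref{2}) in the form $|p(x)|>(2R+1)|x|$; both are imposed on $\alpha$ precisely to make this computation go through, so either is a legitimate citation and the underlying estimate is the same.
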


\begin{proof}

$|p(x)-ay|\geq |p(x)|+|a||y|\geq (2R+1-R)|y|\geq |y|$

$|p(x)-ay|\geq (R+1)|y|\geq \alpha$
\end{proof}

Let

$$\left( \begin{array}{l}x_n\\y_n\end{array}\right)=f^n_a\left( \begin{array}{l}x\\y\end{array}\right).$$

\begin{lemma}
$U_a^+=\bigcup_{n}f^{-n}_a(V_{+});$
\end{lemma}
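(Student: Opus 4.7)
The plan is to prove the two set-theoretic inclusions separately.

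For the easy direction ``$\supset$'', suppose $z \in f_a^{-n}(V_+)$, so that $f_a^n(z) \in V_+$. By Lemma \ref{V+invariance} every subsequent iterate remains in $V_+$. Inside $V_+$ one has $|x| > \alpha$ and $|y| \leq |x|$, so the polynomial escape estimate (\ref{2}) together with $|a| < R$ gives
$$|p(x) - ay| \geq |p(x)| - |a||y| > (2R+1)|x| - R|x| = (R+1)|x|.$$
Thus the first coordinate of $f_a^{n+k}(z)$ grows at least like $(R+1)^k$, and hence $z \in U_a^+$.

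For the nontrivial inclusion $U_a^+ \subset \bigcup_n f_a^{-n}(V_+)$, I will argue by contrapositive: if the forward orbit of $z$ never enters $V_+$, then it is bounded, contradicting $z \in U_a^+$. The crucial ingredient is the H\'enon-specific identity $y_{n+1} = x_n$ between consecutive iterates. Writing $(x_n, y_n) = f_a^n(z)$, the assumption $(x_n, y_n) \notin V_+$ forces either $|x_n| \leq \alpha$ or $|x_n| \leq |y_n|$, so in either case
$$|y_{n+1}| = |x_n| \leq \max(|y_n|, \alpha).$$
A straightforward induction then yields $|y_n| \leq \max(|y_0|, \alpha)$ for every $n$, and consequently $|x_n| \leq \max(|y_n|, \alpha) \leq \max(|y_0|, \alpha)$ as well. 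This confines the orbit to a fixed bidisk.

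There is no serious obstacle here; once the invariance lemma and the escape estimate (\ref{2}) are in hand, the argument is essentially combinatorial bookkeeping. The only step that requires a moment of insight is the decision to track $|y_n|$ rather than the full norm $\|(x_n,y_n)\|$, so as to exploit the rigid link $y_{n+1} = x_n$ built into the H\'enon normal form; without that identity the trichotomy $V_+ \sqcup V_- \sqcup W$ alone would not propagate a uniform bound. I expect the written proof to occupy only a handful of lines.
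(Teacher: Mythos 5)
Your proof is correct and rests on exactly the same key observation as the paper's: the H\'enon-specific identity $y_{n+1}=x_n$, which forces $|y_n|$ not to grow when the orbit avoids $V_+$. The paper phrases the nontrivial direction slightly differently (an escaping orbit is eventually outside $W$, so eventually in $V_-$ if never in $V_+$, and then $|y_n|$ is decreasing, contradicting escape), while you run the clean contrapositive (never in $V_+$ implies $|y_n|\leq\max(|y_0|,\alpha)$ and hence $|x_n|=|y_{n+1}|$ is bounded as well), but this is bookkeeping rather than a different idea; you also spell out the easy $\supset$ inclusion via Lemma \ref{V+invariance} and estimate (\ref{2}), which the paper leaves implicit.
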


\begin{proof}
Let $f_a^n(x)\to \infty$. For some $n$, $f_a^n(x)\in V_{+}$ or
$f_a^n(x)\in V_{-}$.

Suppose that for all $n\geq n_0$ $(x_n,y_n)\in V_-$. Note that
$|y_{n+1}|=|x_n|\leq |y_{n}|$. Sequence $\{|y_n|\}$ is decreasing.
Contradiction.
\end{proof}

\begin{lemma} For $a\in D_R$ $f^{-1}_{a}(V_-)\subset V_{-}$.
\end{lemma}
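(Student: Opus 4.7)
The plan is a direct verification using the explicit formula for $f_a^{-1}$ together with the growth condition (\ref{2}) on $p$.

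First I would invert the H\'enon map. Writing $(X,Y)=f_a(x,y)=(p(x)-ay,\,x)$, we read off $x=Y$ and then solve for $y$ to get
\[
f_a^{-1}(X,Y)=\Bigl(Y,\ \tfrac{p(Y)-X}{a}\Bigr).
\]
So if $(X,Y)\in V_-$ and $(x',y'):=f_a^{-1}(X,Y)$, then $x'=Y$ and $y'=(p(Y)-X)/a$. I must check the two defining inequalities of $V_-$: namely $|y'|>|x'|$ and $|y'|>\alpha$.

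Next I would estimate $|y'|$ from below. Since $(X,Y)\in V_-$ we have $|Y|>\alpha$, so condition (\ref{2}) applies at $y=Y$ and gives $|p(Y)|>(2R+1)|Y|$. Combined with $|X|<|Y|$, this yields
\[
|y'|=\frac{|p(Y)-X|}{|a|}\ \geq\ \frac{|p(Y)|-|X|}{|a|}\ >\ \frac{2R|Y|}{|a|}\ \geq\ 2|Y|,
\]
where the final step uses $|a|\leq R$ (which is what $a\in D_R$ means).

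Finally I would conclude. The bound $|y'|\geq 2|Y|$ immediately gives $|y'|>|Y|=|x'|$, which is the first defining inequality of $V_-$, and also $|y'|\geq 2|Y|>2\alpha>\alpha$, which is the second. Hence $(x',y')\in V_-$, proving $f_a^{-1}(V_-)\subset V_-$. There is no real obstacle: the only slightly nontrivial point is that the factor $1/|a|$ in the formula for $f_a^{-1}$ could have been a problem, but it is absorbed by the quadratic growth built into (\ref{2}), which forces the ratio $|p(Y)|/|Y|$ to dominate $2R$.
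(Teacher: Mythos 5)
Your proof is correct and takes essentially the same approach as the paper: invert $f_a$ explicitly, then use condition (\ref{2}) together with $|X|<|Y|$ and $|a|<R$ to bound $|p(Y)-X|/|a|$ from below by $2|Y|$, which verifies both defining inequalities of $V_-$. The paper's argument is the identical chain of estimates, just written more tersely.
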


\begin{proof}

$\frac{|p(y)-x|}{|a|}\geq \frac{|2R+1||y|-|x|}{R}\geq |y|$

$\frac{|p(y)-x|}{|a|}\geq |y|\geq |\alpha|$
\end{proof}

\begin{lemma}
$U_a^-=\cup_{n}f^n_a(V_{-}).$
\end{lemma}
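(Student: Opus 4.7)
The plan is to mirror the proof of the analogous $V_+$ lemma given just above: establish the two containments separately, using the backward invariance of $V_-$ (established in the preceding lemma) in one direction and the boundedness of $W$ together with the decreasing behavior on $V_+$ in the other.

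For the inclusion $\bigcup_n f_a^n(V_-)\subset U_a^-$, I would start with $w\in V_-$. By the preceding lemma $f_a^{-1}(V_-)\subset V_-$, so the entire backward orbit of $w$ stays in $V_-$. There $|y|>|x|$ and $|y|\geq\alpha$, so condition (\ref{2}) gives $|p(y)-x|\geq |p(y)|-|x|\geq (2R+1)|y|-|y|=2R|y|$, and since $|a|\leq R$ this yields $|y_{-1}|=|p(y)-x|/|a|\geq 2|y|$. Iterating, the $y$-coordinate of $f_a^{-m}(w)$ grows at least geometrically, so $f_a^{-m}(w)\to\infty$. Hence any forward image $z=f_a^n(w)$ lies in $U_a^-$.

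For the reverse inclusion $U_a^-\subset\bigcup_n f_a^n(V_-)$, I would argue as in the cited proof for $V_+$, reversing roles of forward/backward. Take $z\in U_a^-$; since the backward orbit escapes to infinity and $W$ is bounded, eventually $(x_{-m},y_{-m})\in V_+\cup V_-$. If some iterate lands in $V_-$ we are done, so suppose for contradiction that $(x_{-m},y_{-m})\in V_+$ for all large $m$. The inverse map is $f_a^{-1}(x,y)=(y,(p(y)-x)/a)$, so $x_{-m-1}=y_{-m}$, and in $V_+$ we have $|y_{-m}|<|x_{-m}|$. Hence $|x_{-m}|$ is a strictly decreasing positive sequence, so bounded, and $|y_{-m}|<|x_{-m}|$ is bounded too, contradicting escape.

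The only subtle point is checking that the "never in $V_-$" assumption really does force the orbit into $V_+$ for all large $m$ (rather than oscillating back through $W$), but boundedness of $W$ and escape handle this immediately. I expect the write-up to be as short as the analogous $V_+$ proof.
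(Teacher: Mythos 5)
Your proof is correct and takes essentially the same route as the paper, which simply says the argument is the same as for $U_a^+=\bigcup_n f_a^{-n}(V_+)$: if the backward orbit of an escaping point never entered $V_-$, it would eventually lie in $V_+$, where $|x_{-(m+1)}|=|y_{-m}|<|x_{-m}|$ forces boundedness, contradicting escape. Your explicit check of the easy inclusion $\bigcup_n f_a^n(V_-)\subset U_a^-$ via the geometric growth of $|y_{-m}|$ is a detail the paper leaves implicit in the preceding invariance lemma.
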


\begin{proof} The proof is the same as in the previous lemma.
\end{proof}

\subsection{Function $\phi_{a,+}$}

The function $\phi_{a,+}$ is constructed as the limit

$$\phi_{a,+}(x,y)=\lim_{n\to \infty} |x_n|^{\frac{1}{2^n}}.$$

\noindent with the appropriate choice of the branch of root. We
are particularly interested in this function in $V_+$. Sense is
made of the above limit using the telescopic formula

\begin{equation}\label{telescopic_formula1}
\phi_{a,+}=\lim_{n\to \infty}x \exp
\left(\frac{1}{2}\log\frac{x_1}{x^2}+\dots
+\frac{1}{2^n}\log\frac{x_n}{x_{n-1}^2}+\dots\right).
\end{equation}

\begin{lemma} The function $\phi_{a,+}$, defined by
formula (\ref{telescopic_formula1}), is well-defined and
holomorphic for all $(x,y)\in V_+$ and all $a\in D_R$. Moreover,
there exists $B$ so that

$$B^{-1}<\left|\frac{\phi_{a,+}}{x}\right|<B.$$
\end{lemma}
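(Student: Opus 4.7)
The plan is to prove uniform convergence of the telescopic series in (\ref{telescopic_formula1}) by showing that each summand is bounded in modulus by a constant times $2^{-n}$ on all of $V_+\times D_R$; holomorphy and the two-sided bound $B^{-1}<|\phi_{a,+}/x|<B$ will then follow immediately.

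First I would invoke Lemma \ref{V+invariance} to conclude that the entire forward orbit of any $(x,y)\in V_+$ stays in $V_+$, so for every $n\geq 0$ we have $|x_n|>|y_n|$ and $|x_n|>\alpha$. Then, from $x_{n+1}=x_n^2+c-ay_n$, I would write
$$\frac{x_{n+1}}{x_n^2}-1=\frac{c-ay_n}{x_n^2},$$
and apply (\ref{1}) with $x_n$ playing the role of $y$ (permissible since $|x_n|\geq \alpha$), combined with $|a|\leq R$ and $|y_n|<|x_n|$, to obtain
$$\left|\frac{x_{n+1}}{x_n^2}-1\right|\leq \frac{|c|}{|x_n|^2}+\frac{R|y_n|}{|x_n|^2}<\frac{|c|}{|x_n|^2}+\frac{R+1}{|x_n|}<r<1.$$

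Hence the principal branch of $\log(x_{n+1}/x_n^2)$ is unambiguously defined and bounded in modulus by $C:=-\log(1-r)$, independently of $(x,y,a)\in V_+\times D_R$ and of $n$. The $n$-th summand of (\ref{telescopic_formula1}) is then at most $C/2^n$, so the series converges absolutely and uniformly on $V_+\times D_R$. Since each summand is holomorphic in $(x,y,a)$, the Weierstrass convergence theorem shows that $\phi_{a,+}$ is holomorphic, and exponentiating the uniform bound on the total sum yields $e^{-C}\leq |\phi_{a,+}/x|\leq e^C$, so any $B\geq e^C$ works.

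The only delicate point is the consistent choice of a branch of $\log$, but this is automatic: each ratio $x_{n+1}/x_n^2$ lies in the disk $\{|w-1|<r\}$, which is bounded away from $0$ and simply connected, so the principal branch works globally and no monodromy issue arises.
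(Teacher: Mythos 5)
Your proposal is correct and follows essentially the same route as the paper: introduce $s_n^+=x_n/x_{n-1}^2-1$, use condition (\ref{1}) together with forward invariance of $V_+$ to get $|s_n^+|<r$, take principal logarithms bounded by $-\log(1-r)$, and sum the geometric weights. You spell out a couple of steps the paper leaves implicit (explicitly citing Lemma \ref{V+invariance} and writing out why $|s_n^+|<r$), but the argument is the same.
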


\begin{proof}

Let $s_n^+=\frac{x_n}{x^2_{n-1}}-1$. By property (\ref{1}),
$|s_n^+|<r$.

\noindent$\log\frac{x_n}{x_{n-1}^2}=\log(1+s_n^+)$ is calculated
using the principle branch of $\log$. Therefore, the series

\begin{equation}\label{sum1}
\frac{1}{2}\log\frac{x_1}{x^2}+\dots+\frac{1}{2^n}\log\frac{x_n}{x_{n-1}^2}+\dots
\end{equation}

\noindent converges absolutely and uniformly. Since
$|\log\frac{x_n}{x_{n-1}^n}|<-\frac{\log(1-r)}{2^n}$, the infinite
sum (\ref{sum1}) is no bigger than $-\log B= -\log(1-r)$.

The final claim follows immediately from the expression
(\ref{telescopic_formula1}) and the bound derived for the series
(\ref{sum1}).
\end{proof}

We show that $\phi_{a,+}\sim x$ as $x\to \infty$ in Section
\ref{infinity}.

Let ${\cal D}_{n,+}=\{(x,y,a)|\ f^n_a(x,y,a)\in V_+, a\in D_R\}$

\begin{lemma} The function $\phi^{2^n}_{a,+}$ extends to a
holomorphic function on ${\cal D}_{n,+}$ given by
$\phi^{2^n}_{a,+}=\phi_{a,+}\circ f^{n}_a$. Moreover,
$\phi^{2^n}_{0,+}(x,y)=b^{2^n}_p(x)$.
\end{lemma}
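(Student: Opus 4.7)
The plan is to verify the functional equation $\phi_{a,+}\circ f_a = \phi_{a,+}^2$ on $V_+$ directly from the telescopic formula, iterate it to obtain $\phi_{a,+}\circ f_a^n = \phi_{a,+}^{2^n}$ on $V_+$, and then use this identity to extend $\phi_{a,+}^{2^n}$ to all of ${\cal D}_{n,+}$ by composition.

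First I would establish the fundamental functional equation. Pick $(x,y,a)\in V_+\times D_R$ and apply the telescopic formula to the point $f_a(x,y)=(x_1,y_1)$, which also lies in $V_+$ by Lemma \ref{V+invariance}. Writing out
\[
\phi_{a,+}(x_1,y_1) = \lim_{n\to \infty} x_1 \exp\!\left(\tfrac{1}{2}\log\tfrac{x_2}{x_1^2} + \cdots + \tfrac{1}{2^n}\log\tfrac{x_{n+1}}{x_n^2}+\cdots\right),
\]
and squaring the telescopic expression for $\phi_{a,+}(x,y)$, the factor $x^2\exp\!\left(\log\tfrac{x_1}{x^2}\right)=x_1$ absorbs the first term and one recognizes the tail above. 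Hence $\phi_{a,+}(f_a(x,y)) = \phi_{a,+}(x,y)^2$ on $V_+$. Iterating $n$ times yields $\phi_{a,+}\circ f_a^n = \phi_{a,+}^{2^n}$ on $V_+$.

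Next I would define the extension. On ${\cal D}_{n,+}$ the map $f_a^n$ is holomorphic in $(x,y,a)$ (it is a polynomial in $x,y$ with coefficients polynomial in $a$), and by definition $f_a^n(x,y,a)\in V_+$, so the composition $\Phi_n(x,y,a) := \phi_{a,+}\circ f_a^n(x,y,a)$ makes sense and is holomorphic on ${\cal D}_{n,+}$ by the previous lemma (which gives joint holomorphicity of $\phi_{a,+}$ in its domain $V_+\times D_R$). Since $V_+$ is forward-invariant, $V_+\times D_R\subset {\cal D}_{n,+}$, and on this subset the functional equation above gives $\Phi_n = \phi_{a,+}^{2^n}$. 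Thus $\Phi_n$ is a holomorphic extension of $\phi_{a,+}^{2^n}$ from $V_+\times D_R$ to ${\cal D}_{n,+}$.

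For the statement at $a=0$, observe that when $a=0$ we have $f_0(x,y)=(p(x),x)$, so $f_0^n(x,y)=(p^{\circ n}(x),p^{\circ(n-1)}(x))$, and the telescopic formula for $\phi_{0,+}$ involves only the $x$-coordinates $x_k=p^{\circ k}(x)$; it reduces to the classical B\"ottcher series for $p$, so $\phi_{0,+}(x,y)=b_p(x)$ on $V_+$. Combining with the extension,
\[
\phi_{0,+}^{2^n}(x,y) = \phi_{0,+}(f_0^n(x,y)) = b_p(p^{\circ n}(x)) = b_p(x)^{2^n},
\]
where the last equality uses the B\"ottcher functional equation $b_p\circ p = b_p^2$. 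The only delicate point in the argument is the series manipulation verifying $\phi_{a,+}\circ f_a = \phi_{a,+}^2$; everything else is just bookkeeping with holomorphicity and invariance.
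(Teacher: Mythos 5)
Your proposal is correct and follows essentially the same route as the paper: define the extension by $\phi_{a,+}\circ f_a^n$, observe it is holomorphic on ${\cal D}_{n,+}$, and degenerate $f_a$ to $(x,y)\mapsto(p(x),x)$ at $a=0$. The one place you add genuine content is the explicit telescoping verification of the functional equation $\phi_{a,+}\circ f_a=\phi_{a,+}^2$ on $V_+$, which the paper silently takes for granted when it asserts the extension is ``holomorphic by definition''; that check is exactly what is needed to know the composition actually agrees with $(\phi_{a,+})^{2^n}$ on $V_+\times D_R$.
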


\begin{proof} The function $\phi^{2^n}_{a,+}$ is holomorphic by
definition.

As $a\to 0$ the H\'{e}non mappings degenerate to a one-dimensional
map $x\to p(x)$, acting on $y=p(x)$. Therefore,
$\phi_{0,+}(x,y)=b_p(x)$ on $V_+$, and
$\phi_{0,+}^{2^n}(x,y)=b_p^{2^n}(x)$ if $f_0^n(x,y)\in V_+$.
\end{proof}

Note that 
$$K_0^+=J_0^+=J_p\times \mathbb C,$$
$$U_0^+=U_p\times \mathbb C,$$
where $J_p$ is the Julia set for the one-dimensional map
$x\mapsto p(x)$; $U_p$ is the set of points, whose orbits escape
to $\infty$ under the map $x\mapsto p(x)$.

\subsection{Function $\phi_{a,-}$}

We start by working the leading terms of $y_{-n}$ as a polynomial
in $y$ and as a polynomial in $\frac{1}{a}$. The following
notation simplifies the statement of the result:

$\sigma_k=1+2+\dots+2^{k-1}$ for $k\geq 1$ and $\sigma_k=0$ for
$k\leq 0$.

By an easy induction we get:

\begin{lemma}\label{pol_decomposition} The leading term of $y_{-n}(x,y,a)$ considered as a
polynomial in $y$ is $y^{2^n}/a^{\sigma_n}$. The leading term of
$y_{-n}$ considered as a polynomial in $\frac{1}{a}$ is
$\frac{1}{a^{\sigma_n}}(p(y)-x)^{2^{n-1}}$.
\end{lemma}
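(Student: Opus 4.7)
The plan is to prove both statements by a single induction on $n$, driven by the backward-iteration recursion. Since $f_a^{-1}(x,y)=\bigl(y,(p(y)-x)/a\bigr)$, iterating gives $x_{-n}=y_{-(n-1)}$ and therefore the scalar recursion
\begin{equation*}
y_{-(n+1)} \;=\; \frac{p(y_{-n})-y_{-(n-1)}}{a},
\end{equation*}
with initial data $y_0=y$ and $y_{-1}=(p(y)-x)/a$. The arithmetic of the exponents is controlled by the identity $\sigma_{n+1}=2\sigma_n+1$, which is immediate from the definition of $\sigma_k$.

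For the base step I check $n=1$ directly: $y_{-1}=(p(y)-x)/a$ has leading $y$-term $y^2/a=y^{2^1}/a^{\sigma_1}$ and leading $1/a$-term $(p(y)-x)/a=(p(y)-x)^{2^0}/a^{\sigma_1}$. Assuming the lemma for $n-1$ and $n$, I compute the leading term of $p(y_{-n})=y_{-n}^2+c$: in the $y$-variable it is $\bigl(y^{2^n}/a^{\sigma_n}\bigr)^2=y^{2^{n+1}}/a^{2\sigma_n}$, and in the $1/a$-variable it is $(p(y)-x)^{2^n}/a^{2\sigma_n}$. In both readings, the subtracted term $y_{-(n-1)}$ is of strictly lower order than $p(y_{-n})$ (degree $2^{n-1}<2^{n+1}$ in $y$, and pole order $\sigma_{n-1}<2\sigma_n$ in $1/a$), so it does not interfere with the leading term. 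Dividing by $a$ adds $1$ to the denominator exponent, and $2\sigma_n+1=\sigma_{n+1}$ closes the induction.

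The only thing that requires a small comment is that ``leading term'' is read in two different senses—the top-degree monomial in $y$ (with coefficients in the field of Laurent series in $a$) and the top-order pole in $1/a$ (with coefficients in $\mathbb{C}[x,y]$). The induction works separately in each sense because in both cases squaring doubles the relevant exponent while subtracting the lower-order term $y_{-(n-1)}$ is harmless, and the division by $a$ contributes the same $+1$ to $\sigma_n$ in either reading.

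I do not expect any genuine obstacle here; the argument is a bookkeeping induction. The one place to stay alert is verifying the strict inequality of orders so that no cancellation of leading terms occurs—but this is automatic from $2\sigma_n>\sigma_{n-1}$ and $2^{n+1}>2^{n-1}$, which hold for all $n\ge 1$.
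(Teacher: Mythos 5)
Your induction is correct and is exactly the "easy induction" the paper invokes without writing out: you identify the recursion $y_{-(n+1)}=(p(y_{-n})-y_{-(n-1)})/a$, use $\sigma_{n+1}=2\sigma_n+1$, and observe that the subtracted term $y_{-(n-1)}$ has strictly lower degree/pole order so cannot cancel the leading term coming from $y_{-n}^2$. Nothing to correct.
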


We define the function $\phi_{a,-}$ as a limit

\begin{equation}\phi_{a,-}=\lim_{n\to \infty}(y_{-n}\circ
a^{\sigma_n})^{\frac{1}{2^n}}\end{equation}

\noindent with an appropriate choice of branch of root. Note that
the factor $a^{\sigma_n}$ is chosen, so that the leading term of
$y_n\cdot a^{\sigma_n}$, as a polynomial in $y$, is $y^{2^n}$.

Let $D_R^*=D_R\backslash\{0\}$.

Sense is made of the limit in $V_-\times D^*_R$ using the
telescopic formula:

\begin{equation}\label{telescopic_formula2}
\phi_{a,-}(x,y)=\lim_{n\to
\infty}\exp\left(\frac{1}{2}\log\frac{ay_{-1}}{y^2}+\frac{1}{2^2}\log\frac{ay_{-2}}{y^2_{-1}}+\dots\right)
\end{equation}

\begin{lemma}\label{existence_phi-} The function $\phi_{a,-}$, defined by formula
(\ref{telescopic_formula2}) is well-defined and holomorphic on
$V_-\times D_R^*$. Moreover, there exists $B$ so that
$$B^{-1}<\left|\frac{\phi_{a,-}}{y}\right|<B.$$

\end{lemma}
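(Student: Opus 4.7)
The plan is to mirror the argument that established holomorphy of $\phi_{a,+}$, the one new feature being that the backwards recursion carries factors of $1/a$, which is why the domain is $V_- \times D_R^*$ rather than $V_- \times D_R$. From $f_a^{-1}(X,Y) = (Y,(p(Y)-X)/a)$ one gets the backward recursion $a y_{-k} = y_{-(k-1)}^2 + c - x_{-(k-1)}$, hence
$$\frac{a\, y_{-k}}{y_{-(k-1)}^2} = 1 + s_k^-, \qquad s_k^- := \frac{c - x_{-(k-1)}}{y_{-(k-1)}^2}.$$
The invariance $f_a^{-1}(V_-) \subset V_-$ already proved above places every $(x_{-(k-1)}, y_{-(k-1)})$ in $V_-$, so $|x_{-(k-1)}| \leq |y_{-(k-1)}|$ and $|y_{-(k-1)}| \geq \alpha$; property (\ref{1}), which in particular gives $|c|/|y|^2 + 1/|y| < r$ for $|y|\geq \alpha$, then yields the uniform estimate $|s_k^-| < r < 1$ for every $k\geq 1$.

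Next I would verify that the partial sums in (\ref{telescopic_formula2}) indeed compute $(a^{\sigma_n} y_{-n})^{1/2^n}$ for the correct branch of root. Applying the principal branch of $\log$ to $a y_{-k} = (1+s_k^-) y_{-(k-1)}^2$ (legitimate since $|s_k^-|<1$), weighting by $1/2^k$, and telescoping from $k=1$ to $n$ — using $\sum_{k=1}^n 2^{-k} = \sigma_n/2^n$ — gives
$$\frac{1}{2^n}\log y_{-n} + \frac{\sigma_n}{2^n}\log a \;=\; \log y + \sum_{k=1}^{n} \frac{1}{2^k}\log(1+s_k^-),$$
so exponentiating recovers (\ref{telescopic_formula2}) and simultaneously pins down the branch of the $2^n$-th root that is being used to define $\phi_{a,-}$.

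Finally, since $|\log(1+s_k^-)| \leq -\log(1-r)$ throughout $V_- \times D_R^*$, the series $\sum 2^{-k}\log(1+s_k^-)$ converges absolutely and uniformly on compact subsets; each term is holomorphic in $(x,y,a)$ (the $s_k^-$ are rational with no poles off $\{a=0\}$), so $\phi_{a,-}$ is holomorphic on $V_- \times D_R^*$. The total sum is bounded in modulus by $-\log(1-r)=:\log B$, which forces $|\phi_{a,-}/y| \in [1/B,\,B]$. The only obstacle is the bookkeeping for the telescoping: one must select the branches consistently so that the limit emerges as the advertised $2^n$-th root of $a^{\sigma_n} y_{-n}$, and must lean on the $V_-$-invariance to guarantee that the bound $|s_k^-| < r$ holds simultaneously for every $k$, rather than merely along some subsequence.
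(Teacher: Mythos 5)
Your proposal is correct and follows essentially the same route as the paper: introduce $s_k^- = (c - x_{-(k-1)})/y_{-(k-1)}^2$, use $V_-$-invariance and property (\ref{1}) to get $|s_k^-| < r < 1$ uniformly in $k$, and conclude absolute uniform convergence of the weighted $\log$-series with the bound $-\log(1-r)$, giving $B = (1-r)^{-1}$. Your version is slightly more careful about the telescoping identity and the branch of the root, but the substance is identical to the paper's argument.
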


\begin{proof}
Let $s_n^-=\frac{c-x_{-(n-1)}}{y^2_{-(n-1)}}$

Note that $|s_n^-|<r$ for $(x,y)\in V_-$, $a\in D^*_R$.

We evaluate $\frac{ay_{-n}}{y^2_{-(n-1)}}=\log(1+s_n^-)$ using the
principle branch of $\log$.

Since $\log(1+s_n^-)\leq -\log(1-r)$, the series

\begin{equation}\label{sum2}
\frac{1}{2}\log\frac{ay_{-1}}{y^2}+\frac{1}{2^2}\frac{ay_{-2}}{y^2_{-1}}+\dots
\end{equation}

\noindent converges uniformly and absolutely to a holomorphic
function bounded by $-\log(1-r)$. Letting $B=(r-1)^{-1}$, the last
statement of the Lemma follows.
\end{proof}

We show that $\phi_{a,-}\sim y$ as $y\to \infty$ in Section
\ref{infinity}.

The next lemma states that one can extend $\phi_{a,-}$ to a
holomorphic function on $V_-\times D_R$.

\begin{lemma}\label{limiting_value} $\phi_{a,-}=(p(y)-x)^{\frac{1}{2}}+ah(x,y,a)$ for
some holomorphic function $h$ on $V_-\times D_R$.
\end{lemma}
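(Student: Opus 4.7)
The plan is to work directly with the telescopic formula for $\phi_{a,-}$, show term by term that it extends holomorphically across $a = 0$, and compute its value there as $(p(y)-x)^{1/2}$. The function $h := (\phi_{a,-} - (p(y)-x)^{1/2})/a$ is then holomorphic on $V_- \times D_R$ by the standard division lemma applied to a holomorphic function vanishing on the hypersurface $\{a=0\}$.

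First I fix a single-valued branch of $(p(y)-x)^{1/2}$ on $V_-$. Condition (\ref{1}) combined with $|x| < |y|$ on $V_-$ gives $|(c-x)/y^2| \leq |c|/|y|^2 + 1/|y| < r < 1$, so $(p(y)-x)/y^2 = 1 + (c-x)/y^2$ takes values in the disk $|w - 1| < r$ that avoids $0$. Setting $(p(y)-x)^{1/2} := y \cdot ((p(y)-x)/y^2)^{1/2}$ with the principal branch yields a holomorphic function on $V_-$ satisfying $(p(y)-x)^{1/2} \sim y$ as $|y| \to \infty$, compatible with the asymptotics of $\phi_{a,-}$ from Lemma \ref{existence_phi-}.

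Next I analyze the factors in the telescopic product. Iterating $f_a^{-1}(x,y) = (y, (p(y)-x)/a)$ gives $ay_{-n} = p(y_{-(n-1)}) - y_{-(n-2)}$, hence for $n \geq 2$,
\[
\frac{a y_{-n}}{y_{-(n-1)}^2} = 1 + \frac{c - y_{-(n-2)}}{y_{-(n-1)}^2}.
\]
Lemma \ref{pol_decomposition} says $y_{-n} \cdot a^{\sigma_n}$ is a polynomial, so $y_{-n}$ has a pole of order at most $\sigma_n = 2^n - 1$ at $a = 0$. Since $\sigma_n = 2\sigma_{n-1} + 1$, the pole orders cancel exactly in $ay_{-n}/y_{-(n-1)}^2$, and comparison of leading terms via Lemma \ref{pol_decomposition} shows this factor extends holomorphically to $a = 0$ with value $1$. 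The $n = 1$ factor is simply $(p(y)-x)/y^2$, independent of $a$. Thus every factor in the telescopic product extends holomorphically to $V_- \times D_R$.

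Finally, the uniform estimate $|\log(ay_{-n}/y_{-(n-1)}^2)| \leq -\log(1-r)$ from the proof of Lemma \ref{existence_phi-} lets the Weierstrass M-test apply to the series $\sum 2^{-n} \log(ay_{-n}/y_{-(n-1)}^2)$ of holomorphically extended terms, producing a holomorphic sum on $V_- \times D_R$. Exponentiating and evaluating at $a = 0$ gives $\phi_{a,-}|_{a=0} = y \cdot ((p(y)-x)/y^2)^{1/2} = (p(y)-x)^{1/2}$, so $\phi_{a,-} - (p(y)-x)^{1/2}$ is holomorphic and vanishes on $\{a = 0\}$; dividing by $a$ yields the required $h$. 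The main obstacle is the pole-cancellation step: individual iterates $y_{-n}$ become highly singular as $a \to 0$, with poles of order $2^n - 1$, and only the precise identity $\sigma_n = 2\sigma_{n-1} + 1$ encoded in Lemma \ref{pol_decomposition} makes the specific combinations appearing in the telescopic formula regular at $a = 0$.
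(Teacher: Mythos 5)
Your proposal is correct and follows essentially the same route as the paper: both proofs rest on the telescopic formula together with the pole-order bookkeeping of Lemma \ref{pol_decomposition} to show each factor $ay_{-n}/y_{-(n-1)}^2$ extends holomorphically across $a=0$ with value $1$, then conclude $\phi_{0,-}=(p(y)-x)^{1/2}$ and divide by $a$. The only difference is cosmetic — the paper records the same cancellation by noting that $s_n^-=ay_{-n}/y_{-(n-1)}^2-1$ vanishes in $a$ to order $2\sigma_{n-1}-\sigma_{n-2}$, while you state it as exact cancellation of pole orders, which is the same fact.
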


\begin{proof} By Lemma \ref{pol_decomposition} the leading term of $y_{-n}$
as a polynomial in $\frac{1}{a}$ is
$\frac{1}{a^{\sigma_n}}(p(y)-x)^{2^{n-1}}$. Since
$x_{-n}=y_{-(n-1)}$. The leading term of $x_{-n}$ as a polynomial
in $\frac{1}{a}$ is the leading term of $y_{-(n-1)}$.

Recall $s_n^-=\frac{c-x_{-(n-1)}}{y^2_{-(n-1)}}$. It follows that
$s_n^-$ is a polynomial in $a$ and it vanishes in $a$ to the order
$2\sigma_{n-1}-\sigma_{n-2}$. Hence the series (\ref{sum2}) takes
the form

$$
\frac{1}{2}\log\frac{ay_{-1}}{y^2}+ag(x,y,a),
$$
\noindent where $g(x,y,a)$ is a holomorphic function on $V_+\times
D_R$.

By (\ref{telescopic_formula2}), $\phi_{a,-}(x,y)=y \exp
\left(\frac{1}{2}\log \frac{ay_1}{y^2})\right)\exp(ag(x,y,a))$.
The conclusion follows.
\end{proof}

Let
$$C(p)=\{p(y)-x=0\}$$
Domain $f_a(V_{-})$ swells to $\mathbb C^2\backslash
C(p)$ as $a\to 0$.

\begin{lemma}\label{lem:swell} 
For $(x,y)\in \mathbb C^2\backslash C(p)$ we have $(x,y)\in
f_a(V_{-})$ for all sufficiently small $a$. Moreover, for all
$K\Subset \mathbb C^2\backslash C(p)$ $\Rightarrow$ $K\Subset
f_a(V_{-})$ for all small enough $a$.
\end{lemma}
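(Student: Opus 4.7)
My plan is to reduce the claim to an explicit inequality by computing the preimage $f_a^{-1}(x,y)$ directly. Since $f_a(X,Y) = (p(X) - aY, X)$, the inverse is given by $f_a^{-1}(x,y) = \bigl(y,\, (p(y)-x)/a\bigr)$. Hence $(x,y) \in f_a(V_-)$ if and only if $f_a^{-1}(x,y) \in V_-$, which by definition of $V_-$ means
\[
\left|\frac{p(y)-x}{a}\right| > |y| \qquad \text{and} \qquad \left|\frac{p(y)-x}{a}\right| > \alpha.
\]

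For the first assertion, I would simply observe that $(x,y) \notin C(p)$ means $p(y) - x \neq 0$, so $|(p(y)-x)/a| \to \infty$ as $a \to 0$. Both displayed inequalities are then satisfied for all sufficiently small $|a|$, giving $(x,y) \in f_a(V_-)$.

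For the uniform version with $K \Subset \mathbb{C}^2 \setminus C(p)$, the function $|p(y)-x|$ is continuous and strictly positive on the compact set $K$, so it admits a positive lower bound $\varepsilon := \inf_K |p(y)-x| > 0$. Likewise $M := \sup_K |y| < \infty$. Taking $|a| < \varepsilon / \max(M, \alpha)$ forces $|(p(y)-x)/a| > \max(|y|, \alpha)$ uniformly on $K$, hence $K \Subset f_a(V_-)$ for all sufficiently small $a$.

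There is no real obstacle here: the statement is essentially the formal consequence of the identity $f_a^{-1}(x,y) = \bigl(y, (p(y)-x)/a\bigr)$ together with the fact that dividing a nonzero quantity by something tending to zero produces an arbitrarily large modulus. The only mild subtlety is keeping track of the two defining inequalities for $V_-$ simultaneously, which is handled by the single bound $|a| < \varepsilon/\max(M,\alpha)$.
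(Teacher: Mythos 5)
Your proof is correct and takes essentially the same route as the paper: the paper's (very terse) proof simply records the explicit description $f_a(V_-)=\{|p(y)-x|\geq |a|\alpha,\ |p(y)-x|\geq |a||y|\}$ and asserts that both claims follow, while you derive that description from $f_a^{-1}(x,y)=\bigl(y,(p(y)-x)/a\bigr)$ and then spell out the pointwise and compactness arguments. Same idea, just with the details filled in.
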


\begin{proof} Both statements follow from
$$f_a(V_{-})=\{|p(y)-x|\geq |a|\alpha, |p(y)-x|\geq
|a||y|\}.$$
\end{proof}

$${\cal D}_{-,n}=\{(x,y,a)|\ (x,y)\in f_a^{n}(V_-), a\neq
0\ \mbox{or}\ (x,y)\in \mathbb C^2\backslash C(p), a=0 \}.$$

\begin{lemma}\label{lim_value2} The function $\phi_{a,-}^{2^n}(x,y)$
extends from a holomorphic function on $V_{-}\times D_R$ to ${\cal
D}_{-,n}$, by letting

\begin{enumerate}
\item $\phi^{2^n}_{a,-}=a^{\sigma_n}\phi_{a,-}\circ f_a^{-n}$
for $a\neq 0$;
\item $\phi^{2^n}_{0,-}(x,y)=(p(y)-x)^{2^{n-1}}$.
\end{enumerate}
\end{lemma}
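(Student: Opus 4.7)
My plan is to define $\phi_{a,-}^{2^n}$ for $a\ne 0$ via the functional equation arising from the semiconjugacy property of $\phi_{a,-}$, rewrite it in a form manifestly bounded as $a\to 0$, and then extend across $\{a=0\}$ by Riemann's removable singularities theorem. The functional equation $\phi_{a,-}\circ f_a^{-1}=\phi_{a,-}^{2}/a$ on $V_-\times D_R^*$ is a direct consequence of the telescopic formula (\ref{telescopic_formula2}) together with the relation $\sigma_k=2\sigma_{k-1}+1$; iterating gives
$$a^{\sigma_n}\phi_{a,-}\circ f_a^{-n}=\phi_{a,-}^{2^n}\quad\text{on } V_-\cap f_a^n(V_-),\ a\ne 0,$$
so the prescription in (1) consistently extends the original $\phi_{a,-}^{2^n}$ to a holomorphic function on $\mathcal{D}_{-,n}\setminus\{a=0\}$.

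To cross $\{a=0\}$, I plug the telescopic product for $\phi_{a,-}$ in at $(X,Y)=(x_{-n},y_{-n})=f_a^{-n}(x,y)$; using $aY_{-k}/Y_{-(k-1)}^2=1+s_{n+k}^-$ this yields
$$a^{\sigma_n}\phi_{a,-}(x_{-n},y_{-n})=Y_n(x,y,a)\cdot\prod_{k=1}^{\infty}\bigl(1+s_{n+k}^-\bigr)^{1/2^k},$$
where $Y_n:=a^{\sigma_n}y_{-n}$ is, by Lemma \ref{pol_decomposition}, a \emph{polynomial} in $(x,y,a)$ with $Y_n(x,y,0)=(p(y)-x)^{2^{n-1}}$. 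Fixing $(x_0,y_0)\in\mathbb{C}^2\setminus C(p)$ and a compact neighborhood $K$, Lemma \ref{lem:swell} gives $K\Subset f_a(V_-)\subset f_a^n(V_-)$ for all sufficiently small $a$, and then $f_a^{-1}(V_-)\subset V_-$ puts the entire backward tail $\{f_a^{-k}(x,y):k\ge n\}$ in $V_-$. The estimates from the proof of Lemma \ref{existence_phi-} force $|s_{n+k}^-|<r<1$ uniformly in $(x,y,a)\in K\times D_{r_0}$, so the infinite product is uniformly bounded; and the proof of Lemma \ref{limiting_value} shows that for $k\ge 1$ each $s_{n+k}^-$ is a polynomial in $a$ vanishing at $a=0$, so the product tends to $1$ and the whole expression to $(p(y)-x)^{2^{n-1}}$ as $a\to 0$.

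Thus the extended $\phi_{a,-}^{2^n}$ is holomorphic on $\mathcal{D}_{-,n}\setminus\{a=0\}$ and locally bounded across $\{a=0\}$, so Riemann's removable singularities theorem in several variables produces a holomorphic extension to all of $\mathcal{D}_{-,n}$, and continuity forces the $a=0$ value to be $(p(y)-x)^{2^{n-1}}$, matching the prescription in (2). The main obstacle is the bookkeeping in the middle step: one must simultaneously track the $a^{\sigma_n}$ prefactor, the $a^{-\sigma_n}$ blow-up of $y_{-n}$, and the vanishing orders of the $s_{n+k}^-$. Lemma \ref{pol_decomposition} is precisely the combinatorial identity that makes the first two cancel into a polynomial, while Lemma \ref{lem:swell} supplies the geometric input that these estimates survive as $a\to 0$.
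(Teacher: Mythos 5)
Your argument is correct but uses a different decomposition than the paper's. Where the paper inserts the additive normal form $\phi_{a,-}=(p(y)-x)^{1/2}+ah(x,y,a)$ of Lemma \ref{limiting_value} into $a^{\sigma_n}\phi_{a,-}\circ f_a^{-n}$ and controls each resulting piece separately, you bypass Lemma \ref{limiting_value} and split multiplicatively as $Y_n\cdot\prod(1+s_{n+k}^-)^{1/2^k}$ with $Y_n=a^{\sigma_n}y_{-n}$. Both arguments hinge on Lemma \ref{pol_decomposition} to cancel the prefactor $a^{\sigma_n}$ against the $a^{-\sigma_n}$ growth of $y_{-n}$; your split has the small advantage that $Y_n$ is an honest polynomial (whereas $a^{\sigma_n}(p(y_{-n})-x_{-n})^{1/2}$, which the paper calls a polynomial, is only holomorphic), and you make the crossing of $\{a=0\}$ explicit via the Riemann removable-singularity theorem, where the paper treats it implicitly. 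Two points worth stating rather than leaving tacit: (i) the telescopic formula (\ref{telescopic_formula2}) as printed is missing a prefactor $y$; you correctly use $\phi_{a,-}(X,Y)=Y\exp\bigl(\sum\frac{1}{2^k}\log(1+s_k^-)\bigr)$, as confirmed by the bound $B^{-1}<|\phi_{a,-}/y|<B$ of Lemma \ref{existence_phi-}; (ii) the vanishing of $s_{n+k}^-$ at $a=0$ in the proof of Lemma \ref{limiting_value} is established on $V_-\times D_R$, while you need it on a compact $K\Subset\mathbb{C}^2\setminus C(p)$ --- it does transfer, since $s_{n+k}^-$ is a ratio of polynomials in $(x,y,a)$ whose denominator at $a=0$ equals $(p(y)-x)^{2^{n+k-1}}\ne 0$ on $K$ and whose numerator vanishes to order $2\sigma_{n+k-1}-\sigma_{n+k-2}>0$, and the uniform bound $|s_{n+k}^-|<r$ (from $f_a^{-n}(K)\subset V_-$) supplies the domination needed to conclude $\prod(1+s_{n+k}^-)^{1/2^k}\to 1$.
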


\begin{proof} If $a\neq 0$, then we can extend the function
$\phi_{a,-}$ to be holomorphic by defining
$\phi^{2^n}_{a,-}=a^{\sigma_n}\phi_{a,-}\circ f_a^{-n}$. This
agrees with $\phi_{a,-}^{2^n}$ on $V_{-}$.

By Lemma \ref{limiting_value}

\begin{equation}
\phi^{2^n}_{a,-}=a^{\sigma_n}\phi_{a,-}\circ
f_a^{-n}=a^{\sigma_n}(p(y_{-n})-x_{-n})^{\frac{1}{2}}+a^{\sigma_n+1}h(x_{-n},
y_{-n},a)
\end{equation}

By Lemma \ref{pol_decomposition} the leading term of $y_{-n}$, as
a polynomial in $\frac{1}{a}$, is
$\frac{(p(y)-x)^{2^n}}{a^{\sigma}}$. Since $x_{-n}=y_{-(n-1)}$,
the leading term of $x_{-n}$ as a polynomial in $\frac{1}{a}$ is
$\frac{(p(y)-x)^{2^{n-2}}}{a^{\sigma_{n-1}}}$. Thus
$a^{\sigma_n}(p(y_{-n})-x_{-n})^{\frac{1}{2}}$ is a polynomial in
$x,y,a$ and the only term free in $a$ is $(p(y)-x)^{2^{n-1}}$.

By the same argument the function $a^{\sigma_n}h(x_{-n},y_{-n},a)$
is holomorphic in $(x,y,a)$ on $V_+\times D_R$. Therefore,
$a^{\sigma_n+1}h(x_{-n},y_{-n},a)$ vanishes in $a$.

Thus, $\phi_{a,-}^{2^n}=(p(y)-x)^{2^{n-1}}+ah_1(x,y,a)$.
\end{proof}

\noindent We denote $K_0^-$ and $J_0^-$ to be $C(p)$. We set
$U_0^-=\mathbb C^2\backslash J_0^-$. The previous two statements
justify these notations.

\section{Green's functions}\label{sec:Green_function}

The next two lemmas are from \cite{HOV1}. We provide the proof of
Lemma \ref{lem:Green_function} to make the paper self-contained.

\begin{lemma}\label{lem:Green_function} The function

\begin{equation}\label{Green_function}
G_a^+(x,y)=\lim_{n\to \infty}\log^+|f_a^n(x,y)|
\end{equation}

\noindent is well-defined in $\mathbb C^2\times D_R$. It satisfies
the functional equation

\begin{equation}\label{functional_equation}
G_a^+(f_a)=2 G_a^+.
\end{equation}

\noindent Moreover, it is pluriharmonic in $U_a^+$.
\end{lemma}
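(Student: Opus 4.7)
The plan is to reduce well-definedness of the limit to the existence of the B\"{o}ttcher coordinate $\phi_{a,+}$ already established on $V_+$, then propagate by the dynamics, and finally obtain pluriharmonicity on $U_a^+$ from the identity $G_a^+=\log|\phi_{a,+}|$ on $V_+$ together with the functional equation. I read (\ref{Green_function}) with the standard normalization $\frac{1}{2^n}\log^+|f_a^n|$.

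First I would check that the limit exists on $V_+$ and equals $\log|\phi_{a,+}|$. Writing $f_a^n(x,y)=(x_n,y_n)$, inside $V_+$ one has $|f_a^n(x,y)|=|x_n|$, and the bound $B^{-1}<|\phi_{a,+}/x|<B$ together with the invariance $f_a(V_+)\subset V_+$ forces $|x_n|\to\infty$ super-exponentially, so $\log^+|f_a^n|=\log|x_n|$ for all large $n$. Taking the logarithm of the telescopic product (\ref{telescopic_formula1}) then shows
\[
\tfrac{1}{2^n}\log|x_n|\longrightarrow\log|\phi_{a,+}(x,y)|,
\]
which establishes the limit on $V_+$. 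Next I would extend to $\mathbb{C}^2$: for $(x,y)\in U_a^+$ the lemma $U_a^+=\bigcup_n f_a^{-n}(V_+)$ supplies $N$ with $f_a^N(x,y)\in V_+$, and one sets $G_a^+(x,y):=2^{-N}\log|\phi_{a,+}(f_a^N(x,y))|$; this is independent of the choice of $N$ because $\phi_{a,+}^{2}=\phi_{a,+}\circ f_a$ on $V_+$. For $(x,y)\in K_a^+$ the forward orbit is bounded, so $\log^+|f_a^n|$ is uniformly bounded and $2^{-n}\log^+|f_a^n|\to 0$. Both values coincide with (\ref{Green_function}), yielding a well-defined function on $\mathbb{C}^2\times D_R$.

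The functional equation (\ref{functional_equation}) is now immediate from the definition:
\[
G_a^+(f_a(x,y))=\lim_{n\to\infty}\tfrac{1}{2^n}\log^+|f_a^{n+1}(x,y)|=2\lim_{n\to\infty}\tfrac{1}{2^{n+1}}\log^+|f_a^{n+1}(x,y)|=2G_a^+(x,y).
\]
For pluriharmonicity on $U_a^+$, note first that on $V_+$ the function $\phi_{a,+}$ is holomorphic and satisfies $|\phi_{a,+}|\geq B^{-1}|x|\geq B^{-1}\alpha>0$, so $\log|\phi_{a,+}|$ is pluriharmonic; hence $G_a^+$ is pluriharmonic on $V_+$. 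For a general $(x,y)\in U_a^+$, choose a neighborhood mapped by some $f_a^N$ into $V_+$; the functional equation gives $G_a^+=2^{-N}G_a^+\circ f_a^N$ there, exhibiting $G_a^+$ as a pluriharmonic function precomposed with a holomorphic map, hence pluriharmonic.

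The main obstacle I anticipate is not the existence of the limit itself but the $\log^+$ versus $\log$ bookkeeping during the transient phase before the orbit enters $V_+$. For escaping orbits one must confirm that $|f_a^n(x,y)|>1$ for all sufficiently large $n$, which follows from the super-exponential growth inside $V_+$ guaranteed by (\ref{1}); for non-escaping orbits, the $\log^+$ truncation combined with the factor $2^{-n}$ is exactly what collapses the limit to zero. None of the steps is individually delicate, but they need to be assembled in the right order so that the extension from $V_+$ to $\mathbb{C}^2$ is seen to agree with the pointwise limit in (\ref{Green_function}).
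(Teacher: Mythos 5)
Your proof takes essentially the same approach as the paper: both define $G_a^+=\log|\phi_{a,+}|$ on $V_+$, extend to $U_a^+$ via the functional equation (equivalently, via $G_a^+=2^{-N}\log|\phi_{a,+}\circ f_a^N|$ on $f_a^{-N}(V_+)$), set $G_a^+=0$ on the bounded-orbit set, and derive pluriharmonicity on $U_a^+$ from the fact that locally $G_a^+$ is a pluriharmonic function pulled back by the holomorphic map $f_a^N$. The only difference is that you verify directly that the limit in (\ref{Green_function}) (read with the normalization $2^{-n}$, which is clearly intended) agrees with the constructed function — including the vanishing on $K_a^+$ — while the paper compresses this into a one-line remark about uniqueness of a function satisfying the limit formula; you also correctly observe that $\log^+|f_a^n|=\log|x_n|$ only once the orbit enters $V_+$, a small but real bookkeeping point the paper leaves implicit.
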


\begin{proof}

We define, the function $G_a^+=\log |\phi_{a,+}|$ on $V_+$. We use
the functional equation (\ref{functional_equation}) to extend it
to $U_a^+$. We also set $G_a^+(x,y)=0$ for $(x,y)\in J_a^+$.
$G_a^+$, defined this way, is pluriharmonic in $V_+$, since it is
a logarithm of a holomorphic non-zero function $\phi_{a,+}$. It is
pluriharmonic in $U_a^+$, since for each $n$, on $f_a^{-n}(V_+)$,
$G_a^+$ is a pull-back of a pluriharmonic function by a
holomorphic change of coordinates.

Moreover, notice that the function defined this way satisfies
(\ref{Green_function}) and there is a unique function that
satisfies (\ref{Green_function}).
\end{proof}

\begin{lemma}
The function

\begin{equation}
G_a^-(x,y)=\lim_{n\to \infty}|f_a^{-n}(x,y)|+\log|a|
\end{equation}

is well-defined in $\mathbb C^2\times D_R^*$. It satisfies the
functional equation

\begin{equation}\label{functional_equation2}
G_a^-(x,y)\circ f_a^{-}=2 G_a^--\log|a|.
\end{equation}

\noindent Moreover, it is pluriharmonic on $U_a^-$.
\end{lemma}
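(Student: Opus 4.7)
The plan is to mirror the proof of Lemma~\ref{lem:Green_function} with $\phi_{a,-}$ in place of $\phi_{a,+}$. On $V_-\times D_R^*$, set $G_a^-(x,y):=\log|\phi_{a,-}(x,y)|$. By Lemma~\ref{existence_phi-}, $\phi_{a,-}$ is holomorphic and satisfies $|\phi_{a,-}/y|>B^{-1}$, so it is nowhere zero on $V_-$ (where $|y|>\alpha>0$), and $G_a^-$ is pluriharmonic there. The semiconjugacy $\phi_{a,-}\circ f_a^{-1}=\phi_{a,-}^2/a$ (coming from the model $z\mapsto z^2/a$) yields, upon taking $\log|\cdot|$, the functional equation $G_a^-\circ f_a^{-1}=2G_a^--\log|a|$ on $V_-$, which is (\ref{functional_equation2}).

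Next, I propagate $G_a^-$ to all of $U_a^-=\bigcup_n f_a^n(V_-)$ by iterating this equation: for $(x,y)\in f_a^n(V_-)$, set
\[
G_a^-(x,y) := 2^{-n}\log|\phi_{a,-}(f_a^{-n}(x,y))| + (\sigma_n/2^n)\log|a|.
\]
Well-definedness is immediate from the backward-invariance $f_a^{-1}(V_-)\subset V_-$: once a trajectory lands in $V_-$ it stays there, and enlarging $n$ produces the same value by the functional equation on $V_-$. Each summand is pluriharmonic — the first is the pullback of a pluriharmonic function by the biholomorphism $f_a^{-n}$, the second is a real constant — so $G_a^-$ is pluriharmonic on $U_a^-$. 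Finally, extending by $G_a^-\equiv\log|a|$ on $J_a^-\times D_R^*$ gives a function on all of $\mathbb C^2\times D_R^*$ satisfying (\ref{functional_equation2}) everywhere.

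To recover the limit formula, use that on $V_-$ the bound $|\phi_{a,-}|\asymp|y|$, combined with $|f_a^{-n}(x,y)|=|y_{-n}|$ (the dominant coordinate in $V_-$), gives
\[
G_a^-(x,y)=2^{-n}\log^+|f_a^{-n}(x,y)| + (\sigma_n/2^n)\log|a| + O(2^{-n}),
\]
and letting $n\to\infty$ with $\sigma_n/2^n\to 1$ yields the stated limit on $U_a^-$; on $K_a^-$ the preimages stay bounded, the first term goes to $0$, and the formula returns $\log|a|$, consistent with the extension across $J_a^-$. The main point requiring care is the consistency of the functional-equation extension on overlaps $f_a^n(V_-)\cap f_a^m(V_-)$, which however reduces immediately to the equation on $V_-$ via the invariance $f_a^{-1}(V_-)\subset V_-$. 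The restriction to $D_R^*$ rather than $D_R$ is forced both by the explicit $\log|a|$ and by the fact that $\phi_{a,-}$ itself degenerates at $a=0$ (cf. Lemmas~\ref{limiting_value} and~\ref{lim_value2}).
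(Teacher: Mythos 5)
Your proposal is correct and mirrors exactly the paper's proof of the analogous Lemma~\ref{lem:Green_function} for $G_a^+$ (the paper itself defers the $G_a^-$ case to \cite{HOV1}, but your argument is the natural adaptation, with the $\sigma_n/2^n\to 1$ bookkeeping correctly accounting for the $\log|a|$ term). One minor slip: the extension by $G_a^-\equiv\log|a|$ should be declared on all of $K_a^-$, not just its boundary $J_a^-$ (you in fact say this correctly in the final paragraph; the paper's own proof for $G_a^+$ contains the same slip).
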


Equation (\ref{functional_equation2}) is sometimes more
conveniently written

$$(G_a^{-}-\log|a|)\circ f_a^{-1}=2(G_a^{-}-\log|a|).$$

We set

\begin{equation}
G_0^-(x,y)=\log|\phi_{0,-}(x,y)|=\left\{\begin{array}{ll}\frac{1}{2}\log|p(y)-x|\
& \mbox{for}\ (x,y)\not \in C(p);\\

-\infty & \mbox{for}\ (x,y)\in C(p).\end{array}\right.
\end{equation}

Hubbard \& Oberste-Vorth proved that the Green's functions are
continuous when $f_a$ is non-degenerate and the same argument
gives continuity in $x,y$ and $a$ for $G_a^+$ when $a=0$.
Lyubich-Robertson \cite{LR} extend this to $G_a^-$ when $a=0$.

\begin{lemma}[\cite{LR}] The functions $G_a^+(x,y)$ and
$G_a^-(x,y)$ are continuous in $x,y$ and $a$ for $a\in D_R$.
\end{lemma}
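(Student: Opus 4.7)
The plan is to leverage the continuity and non-vanishing of the B\"ottcher functions $\phi_{a,\pm}$ (constructed in Section \ref{bottcher_coordinates}) on the invariant domains $V_\pm$, and then propagate continuity to the rest of $\mathbb{C}^2\times D_R$ via the functional equations $G_a^+\circ f_a = 2G_a^+$ and $G_a^-\circ f_a^{-1} = 2G_a^- - \log|a|$. Combined with an a priori upper bound in terms of $\max(|x|,|y|)$, this will handle points whose dynamics is well-understood; the genuine difficulty will be at $a=0$ on the limit set $C(p)$, where $G_0^-\equiv-\infty$.

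For $G_a^+$, on $V_+\times D_R$ we have $G_a^+ = \log|\phi_{a,+}|$ with $|\phi_{a,+}/x|\geq B^{-1}$, so $G_a^+$ is continuous there; continuity propagates to $\bigsqcup_a U_a^+\times\{a\}$ via the functional equation together with continuity of $f_a^n$ in all arguments. To handle a point $(x_0,y_0,a_0)$ with $(x_0,y_0)\in K_{a_0}^+$ (where $G_{a_0}^+=0$), I would first prove the uniform a priori estimate
\[
G_a^+(x,y)\leq \log\bigl(\max(|x|,|y|)+C\bigr)
\]
on $\mathbb{C}^2\times D_R$, with $C$ depending only on $R$ and $|c|$, by showing inductively that $\max(|f_a^n(x,y)|)+C\leq (\max(|x|,|y|)+C)^{2^n}$. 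Composed with $G_a^+ = 2^{-n}G_a^+\circ f_a^n$, this gives $G_a^+(x,y)\leq 2^{-n}\log(\max(|f_a^n(x,y)|)+C)$. Since $f_{a_0}^n(x_0,y_0)$ is bounded by some $M$ for all $n$, continuity of $f_a^n$ in all variables yields $|f_a^n(x,y)|\leq M+1$ for $(x,y,a)$ near $(x_0,y_0,a_0)$ and any fixed $n$, so $G_a^+(x,y)\to 0$ uniformly as $n\to\infty$.

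For $G_a^-$, the $V_-\times D_R^*$ piece and the extension via the backward functional equation proceed in complete parallel with $G_a^+$; the new issue is joint continuity across $a=0$. At $(x_0,y_0,0)$ with $(x_0,y_0)\notin C(p)$, Lemma \ref{lem:swell} places $(x_0,y_0)$ in $f_a(V_-)$ for all small $a$, and Lemma \ref{lim_value2} identifies $\phi_{a,-}^2 = (p(y)-x)+ah_1$ as continuous and non-vanishing near $(x_0,y_0,0)$, whence $G_a^- = \tfrac12\log|\phi_{a,-}^2|$ is continuous.

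The main obstacle I expect is the case $(x_0,y_0,0)$ with $(x_0,y_0)\in C(p)$, where $G_0^- = -\infty$ and we must show $G_a^-(x,y)\to -\infty$. My approach: for $(x,y,a)$ near $(x_0,y_0,0)$ with $a\neq 0$, a direct computation of $f_a^{-2}(x,y) = \bigl((p(y)-x)/a,\ [p((p(y)-x)/a)-y]/a\bigr)$ shows its second coordinate grows like $1/a$, placing it in $V_-$, so $(x,y)\in f_a^2(V_-)$ and the formula $G_a^- = \tfrac14\log|\phi_{a,-}^4|$ from Lemma \ref{lim_value2} applies. Since $\phi_{a,-}^4(x,y) = (p(y)-x)^2 + ah_2(x,y,a)$ with $h_2$ bounded on compacta, both summands vanish as $(x,y,a)\to(x_0,y_0,0)$, forcing $G_a^-\to -\infty$ in the $[-\infty,\infty)$-valued sense. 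The delicate step is verifying uniformly over a full neighborhood of $(x_0,y_0,0)$ that the iterate $f_a^{-2}(x,y)$ genuinely lies in $V_-$ (not merely along specific subsequences $a\to 0$), so that the $n=2$ formula is uniformly available; this is where the interaction of the scales $|p(y)-x|$ and $|a|$ needs careful bookkeeping.
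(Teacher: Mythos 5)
The strategy for $G_a^+$ and for $G_a^-$ away from $C(p)\times\{0\}$ is sound and parallels what the paper does (the paper defers the easy cases to \cite{HOV1} and Lemma \ref{lim_value2}). However, the step you flag as ``delicate''--- showing $f_a^{-2}(x,y)\in V_-$ uniformly near a point $(x_0,y_0,0)$ with $(x_0,y_0)\in C(p)$ --- is not merely delicate; it is false, and no fixed number of backward iterates can work. Writing $u=p(y)-x$, the first two backward coordinates are $y_{-1}=u/a$ and $y_{-2}=(u^2/a^2+c-y)/a$. In any full neighborhood of $(x_0,y_0,0)$ the scale of $u$ relative to $a$ is arbitrary: along the slice $u=0$ one has $y_{-1}=0$, $y_{-2}=(c-y)/a$, which is bounded when $y_0=c$ and $|c-y|\lesssim|a|$; and more generally, taking $u$ small compared to $a$ and $y$ close to a critical value keeps the backward orbit bounded for arbitrarily many steps (indeed there are nearby points in $K_a^-$ that never reach $V_-$ at all). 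So the escaping time $n_0(x,y,a)$ is unbounded near $C(p)\times\{0\}$, and the formula $G_a^-=\tfrac1{2^n}\log|\phi_{a,-}^{2^n}|$ with a fixed $n$ does not cover a neighborhood. Moreover, Lemma \ref{lim_value2} extends $\phi_{a,-}^{2^n}$ only to ${\cal D}_{-,n}$, which at $a=0$ excludes $C(p)$, so boundedness of the error term $h_2$ near $(x_0,y_0,0)$ is not available either.

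The paper's proof resolves exactly this difficulty: it picks, for each $(x,y,a)$, an $n=n(x,y,a)$ with $f_a^{-n}(x,y)\in V_-$ and bounds
$G_a^-(x,y)<\tfrac12\log B+\tfrac1{2^n}\log|a^{\sigma_n}y_{-n}|$, then proves by induction on the recursion for $z_{-n}=a^{\sigma_n}y_{-n}$ that $|z_{-n}|<(2C)^{\sigma_n}\epsilon^{2^n}$ holds for \emph{all} $n$ on a suitable neighborhood $U(\epsilon)$. Since $\sigma_n/2^n<1$, this yields $\tfrac1{2^n}\log|z_{-n}|<\log(2C\epsilon)$ uniformly in $n$, which is what is needed. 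You would need to replace your fixed-$n$ computation with an analogous estimate that is uniform over the (unbounded) escape time.
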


\begin{proof} It follows by the same argument as is used in
\cite{HOV1} except in the case of $G_a^-$ when $a=0$. For
$(x',y')\not \in C(p)$ the continuity of $G_a^-$ at $(x',y')$ and
$a=0$ follows from Lemma \ref{lim_value2}. For $(x',y')\in C(p)$
more work is required. If we restrict $G_a^-$ to the slice $a=0$,
then we have shown continuity, so we will assume for most of the
rest of this proof that $a\neq 0$ (so $f_a^{-1}$ is defined).

Let us fix $M>0$, and find a neighborhood $U$ of $(x',y')$ so that
for all $(x,y)\in U$, $G_a^-(x,y)<-M$

If $(x,y)\in J_a^-$, then it is enough to require that
$|a|<e^{-M}$.

If $(x,y)\in U_a^-$, then $f_a^{-n}(x,y)\in V_-$ for all $n>n_0$.

If $f_a^{-n}(x,y)\in V_-$, then by Lemma \ref{existence_phi-}
$B^{-1}<\left|\frac{\phi_{a,-}(x_{-n}, y_{-n})}{y_{-n}}\right|<B$.

Therefore, $G_a^-(x,y)<\frac{1}{2}\log
B+\frac{1}{2^n}\log|a^{\sigma_n} y_{-n}|$.

\begin{equation}\label{rec_rel} y_{-n}=\frac{1}{a}\left(p(y_{-(n-1)}\right)-y_{-(n-2)})
\ \mbox{for}\ n\geq 2.\end{equation}

$$y_0=y,\ y_{-1}=\frac{p(y)-x}{a}$$

We wish to estimate $a^{\sigma_n} y_{-n}$ in terms of $y_0$ and
$y_{-n}$.

It is convenient to introduce a new variable
$z_{-n}=a^{\sigma_n}y_{-n}$ and a notation $p~(~x~,~y~)~=~y^2~
p~(~\frac{x}{y}~)~.$

In these new notations the recurrence relation (\ref{rec_rel})
takes form

\begin{equation}
z_{-n}=p(z_{-(n-1)},
a^{\sigma_{n-1}})-a^{\sigma_n-\sigma_{n-2}-1}z_{n-2},
\end{equation}

\noindent where $z_0=y$, $z_{-1}=p(y)-x$.

$|z_{-n}|\leq 2\max(|p(z_{-(n-1)}|, |a|^{\sigma_{n-1}}),
|a|^{2^{n}+2^{n-1}-1}|z_{n-2}|)$

Using the estimate $p(x,y)\leq C\max(x^2, a^2)$, where $C$ is a
constant, we get

$|z_{-n}|\leq 2\max(|z^2_{-(n-1)}|, |a|^{2\sigma_{n-1}},
|a|^{2^n+2^{n-1}-1}|z_{n-2}|)$

Consider a neighborhood

\begin{equation}
U(\epsilon)=\{(x,y,a)|\ |p(y)-x|<\epsilon^2,
|a|<\min(2C\epsilon^2, \frac{1}{2}), |a||y|<C\epsilon^2\}
\end{equation}

By induction one can show that if $(x,y,a)\in U(\epsilon)$, then
$z_{-n}<(2C)^{\sigma_n}\epsilon^{2^n}$.

Therefore, for $(x,y,a)\in U(\epsilon)$

\begin{equation}
G_a^-(x,y,a)<\log(|2C|\epsilon)
\end{equation}

Therefore, for small enough $\epsilon$, $G_a^{-}(x,y,a)<-M$.

Intersecting $U(\epsilon)$ with the $e^{-M}$-neighborhood in
$a$-variable, we get the desired neighborhood.

\end{proof}

\section{Description of the foliations and the critical locus in
terms of $\phi_{a,+}$ and $\phi_{a,-}$}\label{sec:critical_locus}

Note that

$$G_a^+(x,y)=\log|\phi_{a,+}(x,y)|;$$

\noindent Therefore, the foliation $\phi_{a,+}=\mbox{const}$ on
$V_{+}$ is exactly ${\cal F}_a^+$. It can be propagated by
dynamics to the rest of $U_a^+$.

The same way,

$$G_a^-(x,y)=\log|\phi_{a,-}(x,y)|.$$

The foliation $\phi_{a,-}=\mbox{const}$ on $V_-$ is ${\cal F}_a^-$
and it propagates by dynamics to $U_a^-$

$${\cal U}^{\pm}=\{(x,y,a) |\ a\in D_R, (x,y)\in U_a^\pm\}
$$

Below we show that the critical locus is defined by a global
holomorphic form on ${\cal U}^+\cap {\cal U}^{-}$. Therefore, it
is a proper analytic subset of ${\cal U}^{+}\cap {\cal U}^{-}$.

The forms $d \log \phi_{a,+}$ and $d \log \phi_{a,+}$ are
well-defined and holomorphic in ${\cal U}^+$ and ${\cal U}^-$
correspondingly.

Critical locus $\cal C$ is given by the zeroes of the form

$$w(x,y,a)dx\wedge dy \wedge da = d\log \phi_{a,+}\wedge d \log \phi_{a,-}\wedge
da$$

\noindent that is holomorphic in ${\cal U}^{+}\cap {\cal U}^{-}$.
When we prefer to think of $w(x,y,a)$ as a function of two
variables we use the notation $w_a(x,y)=w(x,y,a)$.

Let ${\cal C}_{a_0}={\cal C}\cap \{a=a_0\}$.

We recall a definition of a proper analytic subset:

\begin{definition} $A$ is a proper analytic subset of a complex
manifold $M$, if for every point $x\in M$, there exist a
neighborhood $U$ and a set of functions $f_1,\dots, f_n$ so that
$$f_1=\dots=f_n=0$$
define $A$ in $U$.
\end{definition}

\begin{lemma}\label{analiticity} ${\cal C}$ is a proper analytic
subset of ${\cal U}^{+}\cap {\cal U}^{-}$.
\end{lemma}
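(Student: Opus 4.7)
The plan is to produce a single global holomorphic function $w$ on $\mathcal{U}^+\cap\mathcal{U}^-$ whose zero locus is exactly $\mathcal{C}$, and then verify $w\not\equiv 0$ by computing at one convenient test point. That handles both halves of ``proper analytic subset'' in one stroke.

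First I would check that $d\log\phi_{a,+}$ and $d\log\phi_{a,-}$ are genuinely single-valued holomorphic $1$-forms on $\mathcal{U}^+$ and $\mathcal{U}^-$, notwithstanding the fact that $\phi_{a,\pm}$ themselves may only be multivalued outside $V_\pm$. The point is that the iterated powers $\phi_{a,+}^{2^n}$ and $\phi_{a,-}^{2^n}$ are honestly single-valued, holomorphic, and nonvanishing on $\mathcal{D}_{n,+}$ and $\mathcal{D}_{-,n}$ respectively; since locally $d\log\phi_{a,\pm}=\tfrac{1}{2^n}\,d\log\phi_{a,\pm}^{2^n}$, the $2^n$-th root ambiguity disappears under the logarithmic derivative. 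Hence $d\log\phi_{a,+}\wedge d\log\phi_{a,-}\wedge da$ is a holomorphic $3$-form on $\mathcal{U}^+\cap\mathcal{U}^-$, equal in coordinates to $w(x,y,a)\,dx\wedge dy\wedge da$ for a holomorphic function $w$. A point $(x,y,a_0)$ lies in $\mathcal{C}_{a_0}$ iff the leaves of $\mathcal{F}_{a_0}^+$ and $\mathcal{F}_{a_0}^-$ through it share a tangent, iff $d\log\phi_{a_0,+}$ and $d\log\phi_{a_0,-}$ are proportional on the slice $a=a_0$, iff $w(x,y,a_0)=0$. So $\mathcal{C}=\{w=0\}$ is an analytic subset.

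For properness I evaluate $w$ at a point of the degenerate slice $a=0$, where by the earlier lemmas $\phi_{0,+}(x,y)=b_p(x)$ depends only on $x$ while $\phi_{0,-}^{2}(x,y)=p(y)-x$ by Lemma~\ref{lim_value2}. Pick $(x_0,y_0)=(x_0,1)$ with $|x_0|$ large enough that $(x_0,1)\in V_+\subset U_0^+$ and $p(1)-x_0=1+c-x_0\neq 0$, so that also $(x_0,1)\in\mathbb{C}^2\setminus C(p)=U_0^-$. Using $d\log\phi_{0,-}=\tfrac{1}{2}\,d\log(p(y)-x)$, a short computation gives
$$w(x_0,1,0)=\frac{b_p'(x_0)}{b_p(x_0)}\cdot\frac{p'(1)}{2\,(p(1)-x_0)},$$
which is nonzero because $b_p$ is a biholomorphism in a neighborhood of $\infty$ (so $b_p'/b_p\neq 0$ for large $|x|$) and $p'(1)=2$. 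The identity principle then forces $w\not\equiv 0$ on the component of $\mathcal{U}^+\cap\mathcal{U}^-$ containing $(x_0,1,0)$.

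The main subtle point is the passage from the multivalued $\phi_{a,\pm}$ to the honestly single-valued $d\log\phi_{a,\pm}$, which is precisely what makes it possible to define the critical locus by a \emph{global} holomorphic equation in the first place. The remaining pieces---identifying the zero set of $w$ with the tangency locus of the two foliations, and exhibiting one point of transversality at $a=0$---are short once the definition of tangency is unpacked and the degenerate formulas are in hand.
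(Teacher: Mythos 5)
Your proposal is correct and follows the same essential route as the paper: exhibit the critical locus as the zero set of the global holomorphic form $w\,dx\wedge dy\wedge da = d\log\phi_{a,+}\wedge d\log\phi_{a,-}\wedge da$ on $\mathcal{U}^+\cap\mathcal{U}^-$. The paper's own proof is essentially a one-liner that defers to the preceding discussion where $w$ is introduced, so what you add is exposition that the paper leaves implicit. Two of your additions are genuinely useful. First, you justify why $d\log\phi_{a,\pm}$ are single-valued even though $\phi_{a,\pm}$ themselves are only defined up to $2^n$-th roots away from $V_\pm$; the paper simply asserts these forms are ``well-defined and holomorphic'' without comment. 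Second, you verify $w\not\equiv 0$ by an explicit evaluation at a test point on the slice $a=0$, using $\phi_{0,+}=b_p(x)$ and $\phi_{0,-}^2=p(y)-x$; your computed value
$$w(x_0,1,0)=\frac{b_p'(x_0)}{b_p(x_0)}\cdot\frac{p'(1)}{2\,(p(1)-x_0)}$$
is correct and does land at a point that lies in $\mathcal{U}^+\cap\mathcal{U}^-$ (take $|x_0|$ large so $x_0\in U_p$ and $p(1)-x_0\neq 0$). This explicit non-degeneracy check makes the word ``proper'' earn its keep; the paper's stated definition of ``proper analytic subset'' is really just the definition of an analytic subset, and the fact that $\mathcal{C}$ is not everything is established only later via the explicit computation of $\mathcal{C}_0$ in Corollary~\ref{degenerate-critical-locus}. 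So: same proof idea, but a cleaner and more self-contained write-up than the original.
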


\begin{proof} ${\cal C}$ is defined by the zeroes of the form that is
holomorphic in ${\cal U}^{+}~\cap~{\cal U}^{-}$.
\end{proof}

\subsection{Degenerate critical locus.}

As $a\to 0$ the H\'{e}non mapping degenerates to $(x,y)\mapsto
(p(x),x)$. The foliation ${\cal F}_a^+$ is defined by the form
$d\phi_{a,+}$ on $U^+_a$. Note that $d\phi_{a,+}\neq 0$ on $U_a^+$
for $a\in D_R^*$. Therefore, the foliation ${\cal F}_a^+$ for
$a\in D_R^*$ is nondegenerate. The liming foliation ${\cal F}_0^+$
is degenerate. It has double leaves.

Below we explain what it means for the foliation  to have a double
leaf.

Recall that $U_a^+=\sum_n f_a^{-n}(V_+)$, on each $f_a^{-n}(V_+)$
the foliation ${\cal F}_{a}^+$ is defined by the level sets of the
function $\phi_{a,+}^{2^n}$.

Suppose that a foliation $\cal F$ in $\Omega\subset\mathbb C^2$ is
defined by the level sets of a function $\phi$. Suppose that in a
neighborhood of each point $(x,y)$ one can choose local
coordinates $(u,t)$, so that $\phi=u^n$.

\begin{definition} We say that a leaf $L$ of the foliation $\cal F$
is double if in a neighborhood of a point $(x,y)\in L$,
$\phi=u^2$.
\end{definition}

\begin{note} The definition depends on the defining function $\phi$ and does not depend on the choice of the
local parameter $u$, nor on a point $(x,y)\in L$.
\end{note}

\begin{lemma} The foliation ${\cal F}_0^+$ in $U_0^+$ is a vertical foliation
$x=const$ with leaves $x=p^{-k}(0)$ being double for all $k\geq
0$.
\end{lemma}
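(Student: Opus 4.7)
The plan is to combine the explicit formulas for $\phi_{0,+}^{2^n}$ established in the previous subsection with a local-coordinate analysis of the vanishing order of $p^n$ at critical preimages of $0$.

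First I would establish the vertical foliation claim. By the lemma identifying $\phi_{0,+}$ with the one-dimensional B\"ottcher coordinate, $\phi_{0,+}(x,y) = b_p(x)$ on $V_+$, so the level sets of $\phi_{0,+}$ on $V_+$ are vertical lines $x=\mathrm{const}$. Extending by dynamics, on $f_0^{-n}(V_+)\cap\{a=0\}$ the foliation is defined by level sets of $\phi_{0,+}^{2^n}(x,y)=b_p^{2^n}(x)=b_p(p^n(x))$, which still depend only on $x$. Since $U_0^+=U_p\times\mathbb{C}$, every point of $U_0^+$ lies in some such slab, so ${\cal F}_0^+$ is globally the vertical foliation $x=\mathrm{const}$.

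Next I would verify the double-leaf claim. Fix $\xi$ with $p^k(\xi)=0$ and choose $k$ minimal. Pick $n>k$ large enough that $p^n(\xi)$ lies in the polynomial $V_+$, where $b_p$ is locally biholomorphic; then the foliation near $\{x=\xi\}$ is defined by $b_p(p^n(x))$, and the local vanishing order of $b_p(p^n(x))-b_p(p^n(\xi))$ at $x=\xi$ equals the vanishing order of $p^n(x)-p^n(\xi)$. By minimality, $p^j(\xi)\neq 0$ for $j<k$, so the chain rule $(p^k)'(\xi)=2^k\prod_{j=0}^{k-1}p^j(\xi)$ is nonzero; thus $p^k$ is biholomorphic at $\xi$, and the question reduces to the vanishing order of $p^{n-k}(u)-p^{n-k}(0)$ at $u=0$. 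Writing $p^{n-k}(u)=p^{n-k-1}(c+u^2)$ and using that the disconnected Julia set forces $0$ to escape, so $p^j(0)\neq 0$ for $j\geq 1$ and hence $(p^{n-k-1})'(c)=2^{n-k-1}\prod_{j=0}^{n-k-2}p^j(c)\neq 0$, one obtains $p^{n-k}(u)-p^{n-k}(0)=(p^{n-k-1})'(c)\,u^2+O(u^4)$. Back in the $x$ variable this gives
\[
\phi_{0,+}^{2^n}(x,y)-\phi_{0,+}^{2^n}(\xi,y)=E(x-\xi)^2\bigl(1+O(x-\xi)\bigr),\qquad E\neq 0.
\]
Taking $u=(x-\xi)\sqrt{E(1+O(x-\xi))}$ and $t=y$ provides local coordinates in which $\phi_{0,+}^{2^n}-\mathrm{const}=u^2$, which is the required form for a double leaf. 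For $x_0\notin\bigcup_{k\geq 0}p^{-k}(0)$, the chain rule gives $(p^n)'(x_0)\neq 0$ for all $n$, so $\phi_{0,+}^{2^n}$ is of the form $u$ in the coordinates $(x-x_0,y)$ and the leaf is simple.

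The main technical point is controlling the order of vanishing \emph{exactly}, not just from above. This is where the disconnectedness of $J_p$ enters essentially: it prevents the critical orbit of $p$ from returning to $0$, which is precisely what keeps $(p^{n-k-1})'(c)\neq 0$ and rules out any extra factors that would promote the double zero to a higher-order zero. Aside from this, everything is bookkeeping with the chain rule and the previously established identity $\phi_{0,+}^{2^n}=b_p\circ p^n$.
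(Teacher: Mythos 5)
Your proof is correct and follows the same route as the paper: reduce to the identity $\phi_{0,+}^{2^n}=b_p\circ p^n$, observe the resulting function depends only on $x$, and show that at a preimage $\xi$ of $0$ the composite has a nondegenerate critical point. Your write-up is more explicit than the paper's, which simply asserts that all zeros of $(b_p^{2^n})'$ are nondegenerate; you carry out the chain-rule bookkeeping and correctly isolate where the disconnectedness of $J_p$ is needed (escape of $0$ forbids the critical orbit from returning to $0$, so only one factor in $(p^n)'$ can vanish).
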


\begin{proof} For each point $x_0$ there exists $n$, such that in a neighborhood of the line $x=x_0$
the foliation ${\cal F}_0^+$ is determined by the level sets of
the function $\phi^{2^n}_{0,+}$.

Since $\phi^{2^n}_{0,+}(x,y)=b^{2^n}_p(x)$, the foliation is
vertical.

The multiple leaves appear when $\left(b^{2^n}_p\right)'(x)=0$.
$\left(b^{2^n}_p\right)'(x)=0$ if and only if $p^n~(~x~)~=~0~.$
Moreover, all zeros of $\left(b_p^{2^n}\right)'$ are
non-degenerate. Therefore, all the leaves $x=p^{-n}(0)$ are
double.
\end{proof}

The foliation ${\cal F}_a^-$ in $U^-a$ for $a\in D_R^*$ is defined
by the form $d\phi_{a,-}^{2^n}$. By Lemma \ref{lem:swell} as $a\to
0$, $U_a^-$ swell to $\mathbb C^2\backslash C(p)$. The form
$d\phi_{a,-}$ extends to $\mathbb C^2\backslash C(p)$ as a
holomorphic form. Let ${\cal F}_0^-$ denote the limiting foliation
defined by the form $d\phi_{0,-}$.

\begin{lemma} The foliation ${\cal F}_0^-$ in $U_0^-$ consists of the leaves
$\{p(y)-x=const\}$.
\end{lemma}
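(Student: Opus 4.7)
The plan is to extract this from Lemma \ref{lim_value2}, which already identifies the extension of $\phi_{0,-}^{2^n}$ on $U_0^-$. All the analytic work has been done; what remains is its geometric interpretation.

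First I would observe that the foliation $\mathcal{F}_0^-$ is, by its very definition on $\mathcal{D}_{-,n}$, cut out by the holomorphic form $d\phi_{0,-}^{2^n}$. At $a=0$, the domain $\mathcal{D}_{-,n}$ is by definition all of $\mathbb{C}^2 \setminus C(p) = U_0^-$ (for every $n \geq 1$), so a single $n$ already covers the whole open set in question. Applying Lemma \ref{lim_value2} gives $\phi_{0,-}^{2^n}(x,y) = (p(y)-x)^{2^{n-1}}$ identically on $U_0^-$, whence
\[
d\phi_{0,-}^{2^n} \;=\; 2^{n-1}(p(y)-x)^{2^{n-1}-1}\,\bigl(p'(y)\,dy - dx\bigr),
\]
which away from $C(p)$ is a nowhere-vanishing holomorphic multiple of the exact form $d(p(y)-x)$. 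Consequently $\mathcal{F}_0^-$ coincides with the foliation cut out by $d(p(y)-x) = 0$, whose leaves are the connected components of the level sets $\{p(y)-x = k\}$.

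Finally I would check that each such level set is already a single leaf. The set $\{p(y)-x = k\}$ is the smooth affine curve $x = y^2 + c - k$, parametrized by $y \in \mathbb{C}$; it is irreducible and connected, so it is one leaf rather than a union of them. There is no real obstacle here: the only subtlety is the compatibility check that switching from the local defining function $\phi_{0,-}^{2^n}$ to the global one $p(y)-x$ does not change the kernel distribution, which is immediate from the explicit differentiation above.
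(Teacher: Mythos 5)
Your proof is correct and takes essentially the same route as the paper: both read off the foliation from the explicit degenerate formula for $\phi_{0,-}^{2^n}$. If anything, your appeal to Lemma \ref{lim_value2} is cleaner than the paper's citation of Lemma \ref{limiting_value}, since the latter is stated only on $V_-\times D_R$ (and literally gives $(p(y)-x)^{1/2}$, not $p(y)-x$), while Lemma \ref{lim_value2} gives $\phi_{0,-}^{2^n}=(p(y)-x)^{2^{n-1}}$ on all of $U_0^-$, which is exactly what you need.
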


\begin{proof} By Lemma \ref{limiting_value} $\phi_{0,-}=p(y)-x$ and is defined in $U_0^-$. The statement of
the Lemma immediately follows.
\end{proof}

\begin{corollary}\label{degenerate-critical-locus} ${\cal C}_0=\left[\{y=0\}\bigcup_k\{x=p^{-k}(0)\}\right]\cap U_0^+\cap U_0^-$
\end{corollary}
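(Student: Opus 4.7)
The plan is to realize the critical locus as the zero set of an explicit holomorphic 2-form. Recall that on $\mathcal{U}^+ \cap \mathcal{U}^-$ the critical locus is cut out by the wedge $d\log\phi_{a,+}\wedge d\log\phi_{a,-}$ (wedged with $da$ in the fiber direction), so ${\cal C}_0 \cap U_0^+ \cap U_0^-$ is the zero set of $d\log\phi_{0,+}\wedge d\log\phi_{0,-}$ on $U_0^+\cap U_0^-$. I would combine this with the two preceding lemmas, which identify $\phi_{0,+}(x,y)=b_p(x)$ on $V_+$ (together with the telescopic extension $\phi_{0,+}^{2^n}(x,y)=b_p(x)^{2^n}$ to all of $U_0^+$) and $\phi_{0,-}^{2^n}(x,y)=(p(y)-x)^{2^{n-1}}$ on ${\cal D}_{-,n}$ at $a=0$.

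Next, I would simply compute. Using the local formulas above, on each piece
\begin{equation*}
d\log \phi_{0,+}=\frac{b_p'(x)}{b_p(x)}\,dx,\qquad
d\log\phi_{0,-}=\frac{1}{2}\cdot\frac{p'(y)\,dy-dx}{p(y)-x},
\end{equation*}
so
\begin{equation*}
d\log\phi_{0,+}\wedge d\log\phi_{0,-}
=\frac{b_p'(x)\,p'(y)}{2\,b_p(x)\,(p(y)-x)}\;dx\wedge dy.
\end{equation*}
Passing to the $2^n$-th powers only changes the formula by a nonvanishing constant multiple on the appropriate dynamical pieces ${\cal D}_{n,+}$, ${\cal D}_{-,n}$, so the identity above describes the form consistently on all of $U_0^+\cap U_0^-$. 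On this common domain $b_p(x)\ne 0$ (since $(x,y)\in U_0^+$) and $p(y)-x\ne 0$ (since $(x,y)\in U_0^-$), so the denominator never vanishes and the zero locus of the wedge coincides with the zero locus of the numerator $b_p'(x)\cdot p'(y)$.

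Finally I would read off the two factors. Since $p(y)=y^2+c$, the factor $p'(y)=2y$ vanishes precisely on $\{y=0\}$. For the other factor, differentiating the B\"ottcher functional equation $b_p(p(x))=b_p(x)^2$ yields $b_p'(p(x))\,p'(x)=2 b_p(x)\,b_p'(x)$, so on $U_p$ (where $b_p\ne 0$) one has $b_p'(x)=0$ iff $p'(x)=0$ or $b_p'(p(x))=0$. Since $p'(x)=0$ only at $x=0$, an easy induction on the escape time gives $\{b_p'=0\}=\bigcup_{k\ge 0} p^{-k}(0)$. Combining the two factors,
\begin{equation*}
{\cal C}_0=\Bigl[\{y=0\}\cup\bigcup_k\{x=p^{-k}(0)\}\Bigr]\cap U_0^+\cap U_0^-,
\end{equation*}
as claimed.

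The main obstacle is not substantive — it is mostly bookkeeping across the different dynamical pieces of $U_0^{\pm}$ and choices of branches of $\phi_{0,\pm}$. Once one checks that the $2^n$-th power formulas from Lemma \ref{lim_value2} and the extension of $\phi_{0,+}$ by dynamics define the same logarithmic forms (up to nonvanishing factors), the identification of the zero locus is immediate and the geometric content reduces to the single observation that the horizontal leaves of ${\cal F}_0^-$ become vertical exactly along $y=0$, while ${\cal F}_0^+$ already has double leaves along $x=p^{-k}(0)$.
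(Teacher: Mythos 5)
Your proposal is correct and takes essentially the same route as the paper: the corollary is deduced from the two preceding lemmas identifying ${\cal F}_0^+$ (vertical, with double leaves at $x\in p^{-k}(0)$) and ${\cal F}_0^-$ (level sets of $p(y)-x$), combined with the fact that ${\cal C}$ is the zero set of $d\log\phi_{a,+}\wedge d\log\phi_{a,-}\wedge da$. You simply make the implicit computation explicit by writing out the wedge form and factoring its numerator as $b_p'(x)\,p'(y)$, which is exactly the content of the paper's two lemmas.
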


\begin{figure}[h]
\centering
\psfrag{J+}{$J_0^+$}\psfrag{J-}{$J_0^-$}\psfrag{x}{$x$}\psfrag{y}{$y$}\psfrag{x=p1}{$x=p^{-1}(0)$}\psfrag{x=pk}{$x=p^{-k}(0)$}
\psfrag{x=pm}{$x=p^{-m}(0)$}
\includegraphics[height=5cm]{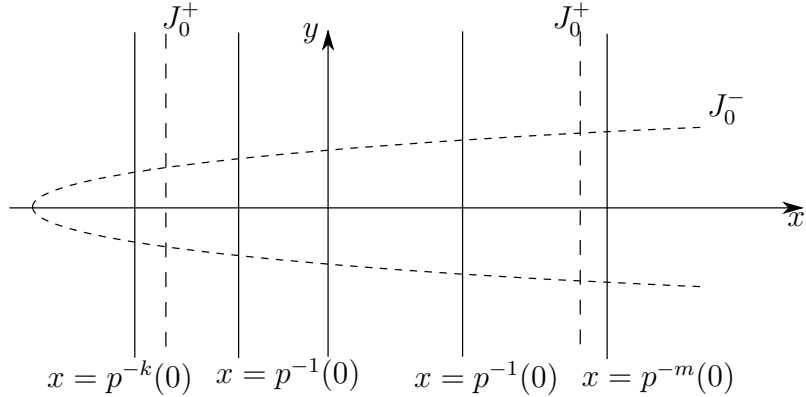}
\caption{The degenerate Critical Locus}
\end{figure}

\section{Critical locus near infinity.}\label{infinity}

The goal of this section is to calculate the critical locus at a
neighborhood of $x=\infty$ in the compactification of $\mathbb
C^2$ given by $\mathbb C\mathbb P^1\times \mathbb C\mathbb P^1$.

First, we extend the foliation ${\cal F}_a^+$ to the line
$x=\infty$ in $\mathbb C\mathbb P^1\times \mathbb C\mathbb P^1$
compactification of $\mathbb C^2$. Let

$$\hat{V}_{+}= \{(x,y)\in \mathbb C\mathbb P^1\times \mathbb
C\mathbb P^1|\ (x,y)\in V_+ \ \mbox{or}\ \{x=\infty, y\neq
\infty\}\}$$




Let $t=\frac{1}{x}$, $v=\frac{1}{y}$.

\begin{lemma}The function $\frac{\phi_{a,+}}{x}$ extends as a holomorphic function to ${\hat V}_+\times D_R$

Moreover, $t \phi_{a,+}=1+th_1(t,y,a)$, $\frac{\partial
h_1}{\partial y}=u \tilde{h}_1(t,y,a)$.
\end{lemma}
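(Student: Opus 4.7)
The plan is to work from the telescopic product formula (\ref{telescopic_formula1}) and push it across the hyperplane $\{x=\infty\}$. Write
$$\frac{\phi_{a,+}}{x}=\exp\!\Big(\sum_{k\ge 1}\tfrac{1}{2^k}\log(1+s_k^+)\Big),\qquad s_k^+=\frac{c-ay_{k-1}}{x_{k-1}^2},$$
and show that each summand extends holomorphically to a neighbourhood of $\{t=0\}$ in $\hat V_+\times D_R$ and that the series converges uniformly on compacta. The two pieces of information one needs are (i) a good algebraic expression for $s_k^+$ in the coordinates $(t,y,a)$ with $t=1/x$, and (ii) a uniform bound on $|s_k^+|$ near $\{t=0\}$ that does not deteriorate with $k$.

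For (i), use the H\'enon recurrence $x_{k+1}=x_k^2+c-ax_{k-1}$ (since $y_k=x_{k-1}$) and prove by induction that $x_k/x^{2^k}$ is a polynomial in $(t,y,a)$ whose constant term at $t=0$ is $1$, and in fact $x_k/x^{2^k}=1+t^2Q_k(t,y,a)$ for a polynomial $Q_k$. Consequently
$$s_k^+ \;=\; \frac{x_k/x^{2^k}}{(x_{k-1}/x^{2^{k-1}})^2}-1$$
is a rational function of $(t,y,a)$ with denominator nonvanishing near $\{t=0\}$, and it vanishes at $t=0$ to order at least $2$. Therefore $\log(1+s_k^+)$ is holomorphic on a neighbourhood of $\hat V_+\times D_R$ (principal branch), extending its definition from $V_+$.

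For (ii), take any point $p_0\in\{t=0\}\times\mathbb{C}\times D_R\subset \hat V_+\times D_R$ and pick a small polydisk $U$ around it such that $U\setminus\{t=0\}\subset V_+\times D_R$ (possible because $\{|x|>|y|,|x|>\alpha\}$ already reaches every direction of infinity in the $x$-variable). On $U\setminus\{t=0\}$ the bound $|s_k^+|<r<1$ from Section \ref{bottcher_coordinates} holds, and by continuity across $\{t=0\}$ (where $s_k^+=0$) the same estimate holds on all of $U$, uniformly in $k$. Hence $|\log(1+s_k^+)|\le -\log(1-r)$, and the Weierstrass $M$-test gives uniform convergence of the series on $U$. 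Exponentiating yields the holomorphic extension of $\phi_{a,+}/x$ to $\hat V_+\times D_R$. Since every $s_k^+$ vanishes on $\{t=0\}$, we have $\phi_{a,+}/x=1$ there, and the factor theorem gives $\phi_{a,+}/x=1+th_1(t,y,a)$ with $h_1$ holomorphic on $\hat V_+\times D_R$. For the last assertion, recall $\phi_{0,+}(x,y)=b_p(x)$ is independent of $y$; therefore $\partial_y(\phi_{a,+}/x)|_{a=0}=0$, so $t\,\partial_y h_1|_{a=0}=0$ and hence $\partial_y h_1|_{a=0}\equiv 0$. Holomorphy in $a$ now produces $\partial h_1/\partial y=a\,\tilde h_1(t,y,a)$ for a holomorphic $\tilde h_1$.

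The main obstacle is Step (ii): extracting a bound on $|s_k^+|$ that survives the passage to $t=0$ and stays uniform in $k$. The trick is that $\{t=0\}$ lies on the boundary of $V_+$, so any neighbourhood of a point at infinity minus the hyperplane itself is already inside $V_+$ — exactly the region where the a priori bound $|s_k^+|<r$ was established — and the orders of vanishing $x_k/x^{2^k}=1+O(t^2)$ guarantee that the limiting values at $t=0$ do not disturb this bound.
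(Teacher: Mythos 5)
Your argument for the main claim ($\phi_{a,+}/x$ extends holomorphically to $\hat V_+\times D_R$ and equals $1+th_1$) is correct and takes the same route as the paper — namely, showing that each $s_k^+$ extends holomorphically across $\{t=0\}$ and vanishes there, keeping the bound $|s_k^+|<r$, and passing this through the telescoping product. Your induction $x_k/x^{2^k}=1+t^2Q_k$ is a clean way to make the order of vanishing explicit, and the continuity argument for carrying the bound $|s_k^+|<r$ across $\{t=0\}$ is fine.

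The last assertion, however, you have misread. The ``$u$'' in the statement is a typo for ``$t$'' — the paper's own proof says $\frac{\partial h_1}{\partial y}=t\tilde h_1(t,y,a)$, and this is the form that is actually used later in Lemma \ref{lemma_infinity}, where the coefficient of $dy$ in $t^2\,d\phi_{a,+}$ must be divisible by $t^3$ so that wedging with $d\phi^2_{a,-}$ (whose $dt$-coefficient blows up like $t^{-2}$) still produces $\omega t^2 = (p'(y)+t h_6)\,dy\wedge dt\wedge da$. You proved instead that $\partial h_1/\partial y = a\,\tilde h_1$ — a true but different fact — by degenerating to $a=0$ and observing $\phi_{0,+}=b_p(x)$ is $y$-independent. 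That argument cannot yield the needed factor of $t$. The correct route is the same device you already set up: since the $y$-dependence of each $x_k$ enters only through $x_1=x^2+c-ay$, one has $\partial x_k/\partial y$ divisible by $a$ and $\partial s_k^+/\partial y$ vanishing at $t=0$ (in fact to order $\geq 2$); feed these term-by-term estimates into $\partial_y\log(\phi_{a,+}/x)=\sum 2^{-k}\,\frac{\partial_y s_k^+}{1+s_k^+}$ to conclude $\partial_y(\phi_{a,+}/x)=O(t^2)$ and hence $\partial h_1/\partial y = t\tilde h_1$. That is the statement the rest of the paper relies on.
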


\begin{proof} By the Riemann Extension Theorem, $\frac{\phi_{a,+}}{x}$ can
be extended to $x=\infty$.

Note that the functions $s_n^+$ and $\frac{\partial
s_n^+}{\partial y}$ vanish in $t$ for all $k$. Thus, in the sum
(\ref{sum1}) every term vanishes in $u$. Therefore, the infinite
sum (\ref{sum1}) vanishes in $u$ as well. And
$\frac{\phi_{a,+}}{x}=1+th_1(t,y,a)$, where $h_1$ is a holomorphic
function in $t,y,a$. The same way one proves, $\frac{\partial
h_1}{\partial y}=t\tilde{h}_1(t,y,a)$.
\end{proof}


$$
\hat{V}_-=\{(x,y)\in \mathbb C\mathbb P^1\times \mathbb C\mathbb
P|\ (x,y)\in V_{-}\ \mbox{or}\ \{y=\infty,x\neq \infty\}\}
$$

\begin{lemma}[\cite{LR}]\label{phi-infinity} The function $\frac{\phi_{a,-}}{y}$ extends
holomorphically to $\hat{V}_-\times D_R$. Moreover,
$v\phi_{a,-}=1+vh_2(x,v,a)$, $\frac{\partial h_2}{\partial
x}=v\tilde{h}_2(x,v,a)$.
\end{lemma}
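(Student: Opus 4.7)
The plan is to mirror the proof of the preceding lemma for $\phi_{a,+}/x$, interchanging the roles of $x$ and $y$ (and of $t = 1/x$ and $v = 1/y$). Work near $v = 0$ in the coordinates $(x, v, a)$ on $\hat V_- \times D_R$ and, for $a \ne 0$, factor the telescopic formula (\ref{telescopic_formula2}) as $\phi_{a,-}/y = \exp(S)$, where
$$S(x,v,a) = \sum_{n \ge 1} \frac{1}{2^n}\,\log(1 + s_n^-), \qquad s_n^- = \frac{c - x_{-(n-1)}}{y_{-(n-1)}^2}.$$
The strategy is to show that each $s_n^-$ extends holomorphically across $v = 0$ and vanishes there to order at least $v^2$, deduce the same for $S$, and then exponentiate and differentiate.

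The key step is the order-of-vanishing estimate on $s_n^-$. For $n = 1$, $s_1^- = (c - x)v^2$ is immediate. For $n \ge 2$, use $x_{-(n-1)} = y_{-(n-2)}$ together with Lemma \ref{pol_decomposition}: the iterate $y_{-k}$ is a polynomial in $y$ of degree $2^k$ with leading term $y^{2^k}/a^{\sigma_k}$, so $v^{2^k} y_{-k}$ extends holomorphically across $v = 0$ to the nonvanishing value $1/a^{\sigma_k}$ for $a \in D_R^*$. Consequently $c - y_{-(n-2)}$ has at worst a pole of order $2^{n-2}$ in $v$, while $1/y_{-(n-1)}^2$ vanishes to order $2^n$, so $s_n^-$ vanishes to order at least $2^n - 2^{n-2} \ge 2$ in $v$.

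Combining this pointwise vanishing with the uniform bound $|s_n^-| < r < 1$ from the proof of Lemma \ref{existence_phi-}, the logarithmic series $S$ converges absolutely and uniformly on compact subsets of $\hat V_- \times D_R^*$ to a holomorphic function that vanishes at $v = 0$. Therefore $\phi_{a,-}/y = e^S$ extends holomorphically across $v = 0$ with value $1$, and the equation $\phi_{a,-}/y = 1 + v h_2(x,v,a)$ defines $h_2$ as a holomorphic function of $(x,v,a)$. The extension across $a = 0$ is handled separately via Lemma \ref{limiting_value}, from which $\phi_{0,-}/y = \sqrt{1 + (c-x)v^2}$ is manifestly holomorphic across $v = 0$. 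For the derivative refinement, each $s_n^-$ has the form $v^2 u_n(x,v,a)$ with $u_n$ holomorphic, so $\partial_x s_n^- = v^2 \partial_x u_n$ also vanishes to order $v^2$; by uniform convergence the same is true of $\partial_x S$, and comparing $e^S \partial_x S = \partial_x(\phi_{a,-}/y) = v\,\partial_x h_2$ forces $\partial_x h_2$ to vanish to first order in $v$, giving $\partial_x h_2 = v\,\tilde h_2$.

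The main obstacle is the careful bookkeeping of orders of vanishing in $v$ through the infinite telescopic sum; the combination of pointwise order-of-vanishing estimates for each $s_n^-$ with the uniform bound $|s_n^-| < r$ is what guarantees that the vanishing is preserved under summation and under term-by-term differentiation in $x$.
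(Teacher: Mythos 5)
Your argument is correct and follows essentially the same route as the paper, which simply refers to the proof of the preceding lemma for $\phi_{a,+}/x$: extend across $v=0$ by the Riemann Extension Theorem, then show that each term $\log(1+s_n^-)$ of the telescopic series, together with its $x$-derivative, vanishes to order at least $2$ in $v$, so that the uniformly convergent sum inherits this vanishing and yields $\phi_{a,-}/y = 1 + vh_2$ with $\partial_x h_2 = v\tilde{h}_2$. Your explicit degree count via Lemma~\ref{pol_decomposition}, and your separate handling of the $a=0$ slice through Lemma~\ref{limiting_value} (with the codimension-two gluing across $\{v=0,a=0\}$ handled implicitly by boundedness), is exactly the bookkeeping that the paper leaves to the reader.
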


\begin{proof} The proof is the same as in the previous lemma.
\end{proof}

In $V_+\cap {\cal D}_{1,-}$ the critical locus is given by the
zeroes of the form

$$\omega = d\phi_{a,+}\wedge d\phi^2_{a,-}\wedge da$$

For the next lemma we fix $(t,y)$-coordinates in $\hat{V}_+$.

\begin{lemma}\label{lemma_infinity} 
The critical locus extends holomorphically to $\left(\hat{V}_+\times D_R\right)\cap
{\cal D}_{1,-}$. The point $(0,y,a)\in {\cal C}_a\cap
\hat{V}_+\cap {\cal D}_{1,-}$ iff $y=0$. The tangent line to the
critical locus at $(0,0,a)$ is given by $2dy+Cdt=0$ with $C$
depending on $a$.
\end{lemma}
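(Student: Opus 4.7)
The plan is to work in the coordinate chart $(t, y)$ on $\hat V_+$ with $t = 1/x$. From the previous lemma I have $\phi_{a,+} = \tfrac{1}{t} + g_1(t, y, a)$ with $\partial_y g_1 = t\,\tilde g_1$. I will derive a parallel expansion
\begin{equation*}
\phi_{a,-}^2 = -\tfrac{1}{t} + p(y) + R(t, y, a)
\end{equation*}
with $R$ holomorphic at $t=0$. Both $d\phi_{a,+}$ and $d\phi_{a,-}^2$ then carry order-two poles in $t$, so $\omega = d\phi_{a,+}\wedge d\phi_{a,-}^2\wedge da$ has a pole of order two along $\{t=0\}$. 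The renormalized $3$-form $\tilde\omega := t^2\omega$ is holomorphic on $(\hat V_+\times D_R)\cap {\cal D}_{1,-}$ and cuts out the same zero set as $\omega$ off $\{t=0\}$, yielding the holomorphic extension of $\cal C$ across $\{t=0\}$.

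To obtain this expansion I combine Lemma \ref{lim_value2} (at $n=1$), which gives $\phi_{a,-}^2(x,y) = a\,\phi_{a,-}(y,\,(p(y)-x)/a)$, with the $\hat V_-$-expansion from Lemma \ref{phi-infinity}, $\phi_{a,-}(u, w) = w + h_2(u, 1/w, a)$, $\partial_u h_2 = (1/w)\tilde h_2$. Substituting yields
\begin{equation*}
\phi_{a,-}^2(x,y) = (p(y)-x) + a\,h_2\!\left(y,\ \tfrac{a}{p(y)-x},\ a\right),
\end{equation*}
so in $(t, y)$ coordinates $R(0, y, a) = a\,h_2(y, 0, a)$. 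Crucially, $\partial_u h_2(\,\cdot\,, 0, a) = 0$ by Lemma \ref{phi-infinity}, forcing $h_2(y, 0, a) = H_-(a)$ to depend only on $a$; hence $\partial_y R(0, y, a) = 0$.

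Writing $\tilde\omega = F(t, y, a)\, dt\wedge dy\wedge da$ and expanding the wedge product, the leading pole terms combine to give $F(0, y, a) = -p'(y) - \partial_y R(0, y, a) = -p'(y) = -2y$, whose only root is $y = 0$; this establishes $(0, y, a)\in {\cal C}_a$ iff $y = 0$. For the tangent direction at $(0, 0, a)$, differentiating $F$ at the origin gives $\partial_y F(0, 0, a) = -p''(0) = -2$ and $\partial_t F(0, 0, a) = -\tilde g_1(0, 0, a) + O(a^2)$ (the $O(a^2)$ coming from $\partial_t\partial_y R(0, 0, a)$), so the tangent line is $2\, dy + C\, dt = 0$ with $C = \tilde g_1(0,0,a) + O(a^2)$, visibly depending on $a$. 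The main obstacle is precisely the $y$-independence of $R(0, y, a)$: without it, an $a^2$-perturbation would displace the root of $F(0,\,\cdot\,,a)$ off $y=0$ and the ``iff'' would fail. This cancellation is forced by the fact that the inverse dynamics sends a neighborhood of $\{x=\infty\}$ in $\hat V_+$ into a neighborhood of $\{y_{-1}=\infty\}$ in $\hat V_-$, where the ``flat at infinity'' identity $\partial_x h_2 = v\tilde h_2$ from Lemma \ref{phi-infinity} kills the would-be obstruction.
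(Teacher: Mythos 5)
Your proposal is correct and follows essentially the same route as the paper: work in the $(t,y)$-chart, use Lemma \ref{lim_value2} (at $n=1$) together with Lemma \ref{phi-infinity} to expand $\phi_{a,-}^2 = p(y) - 1/t + a\,h_2(y, at/(tp(y)-1), a)$, invoke $\partial_x h_2 = v\tilde h_2$ (and $\partial_y h_1 = t\tilde h_1$) to cancel the dangerous terms, and conclude that $t^2\omega = \bigl(-p'(y)+O(t)\bigr)\,dt\wedge dy\wedge da$, giving the zero set $\{y=0\}$ on $\{t=0\}$ and the stated tangent line. The paper carries out this same cancellation via a direct chain-rule computation of $d\phi_{a,-}^2$, whereas you isolate the remainder $R$ and argue $\partial_y R(0,y,a)=0$; the content is identical.
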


\begin{proof}
$$d\phi_{a,+}=-\frac{dt}{t^2}+\frac{\partial h_1}{\partial
t}dt+\frac{\partial h_1}{\partial y}dy+\frac{\partial
h_1}{\partial a}da=\frac{1}{t^2}\left(-1+t^2\frac{\partial
h_1}{\partial t}dt+t^3\tilde{h_1}dy+t^2\frac{\partial
h_1}{\partial a}da\right)$$

$$\phi^2_{a,-}(x,y)=a
\phi_{a,-}\circ f^{-1}_a(x,y)=a\phi_{a,-}\left(y,
\frac{p(y)-x}{a}\right)=\frac{tp(y)-1}{t}+ah_2(y,\frac{at}{tp(y)-1},a)$$

$$d\phi_{a,-}=\left(p'(y)+a\frac{\partial h_2}{\partial y}(y,
\frac{at}{tp(y)-1},a)+\frac{a^2t^2p'(y)}{(tp(y)-1)^2}\frac{\partial
h_2}{\partial v}(y,\frac{at}{tp(y)-1})\right)dy$$

$$+\left(\frac{1}{t^2}+\left[\frac{a}{tp(y)-1}+\frac{atp(y)}{(tp(y)-1)^2}\right]\frac{\partial
h_2}{\partial v}\right)dt+h_3(t,y,a)da,$$

\noindent where $h_3(t,y,a)$ is some holomorphic function in
$t,y,a$.

By Lemma \ref{phi-infinity}, $\frac{\partial h_2}{\partial
x}=v\tilde{h}_2$. Hence $\frac{\partial h_2}{\partial
x}(y,\frac{at}{tp(y)-1},a)=\frac{at}{tp(y)-1}\frac{\partial
h_2}{\partial x}(y,\frac{at}{tp(y)-1})$. Thus, there are
holomorphic functions $h_4,$ $h_5$ on ${\cal D}_1^-$ so that

$d\phi_{a,-}=(p'(y)+th_4(t,y,a))dy+(\frac{1}{t^2}h_5(t,y,a))dt+h_3(t,y,a)da$

Therefore, there exists a holomorphic function $h_6$ on
$\left(\hat{V}_+\times D_R\right)\cap {\cal D}_{1,-}$ so that

$$\omega t^2=\left(p'(y)+t h_6\right)dy\wedge dt\wedge da$$

\noindent Conclusion follows.
\end{proof}

\begin{corollary} Fix $\epsilon$. There exists $\delta$
so that for $|a|<\delta$ the critical locus in
$$\{|y|\leq \epsilon\}\cap \{|x|\geq \alpha\}$$
\noindent is the graph of a function $y(x)$.
\end{corollary}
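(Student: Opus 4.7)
The plan is to pass to the coordinate $t = 1/x$ on $\hat{V}_+$, so that the region $\{|y|\leq\epsilon\}\cap\{|x|\geq\alpha\}$ becomes the compact polydisk $P = \{|t|\leq 1/\alpha,\ |y|\leq\epsilon\}$, and then to apply Rouch\'e's theorem fiberwise to the defining function supplied by Lemma \ref{lemma_infinity}. By that lemma, the critical locus on $P$ (times a sufficiently small disk in $a$) is cut out by the holomorphic function
\[ w(t,y,a) \;=\; p'(y) + t\,h_6(t,y,a) \;=\; 2y + t\,h_6(t,y,a). \]

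First I want to identify what happens at $a=0$. Since $G_p(p^{-k}(0)) = 2^{-k} G_p(0) \leq G_p(0) < r$, while the choice of $\alpha$ forces $G_0^+ > r$ on $\{|x|>\alpha,\,|x|>|y|\}$, I conclude $|p^{-k}(0)| \leq \alpha$ for every $k\geq 0$ (after enlarging $\alpha$ by an arbitrarily small amount to achieve strict inequality). Thus the vertical components $\{x = p^{-k}(0)\}$ of ${\cal C}_0$ from Corollary \ref{degenerate-critical-locus} do not enter the region, and ${\cal C}_0$ restricted to $P$ is exactly $\{y=0\}$. This forces $w(t,0,0)\equiv 0$, hence $h_6(t,0,0)\equiv 0$.

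Next I factor. By Taylor expansion in $(y,a)$, the vanishing of $h_6$ on $\{y=a=0\}$ gives
\[ h_6(t,y,a) \;=\; y\,A(t,y,a) + a\,B(t,a), \qquad w(t,y,a) \;=\; y\bigl(2+t\,A(t,y,a)\bigr) + t\,a\,B(t,a), \]
with $A, B$ holomorphic. At $a=0$ the equation $w(t,y,0)=y(2+tA(t,y,0))=0$ has zero set $\{y=0\}$ in $P$, which forces $g(t,y):=2+tA(t,y,0)$ to be nowhere vanishing on the compact polydisk. By continuity in $a$ and compactness of $P$, there is a constant $c>0$ such that $|2+t\,A(t,y,a)|\geq c$ uniformly for $(t,y)\in P$ and $|a|<\delta_0$.

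Finally, shrink $\delta\leq\delta_0$ so that $|t\,a\,B(t,a)| < c\,\epsilon$ throughout $P\times\{|a|<\delta\}$; since $|t|\leq 1/\alpha$ this only requires $\delta\cdot\sup|B|/\alpha<c\,\epsilon$. For each fixed $(t,a)$ with $|t|\leq 1/\alpha$, $|a|<\delta$, compare $w(t,\cdot,a)$ to its ``main term'' $y(2+tA)$ on the circle $|y|=\epsilon$: the main term has modulus at least $c\,\epsilon$, while the correction $t\,a\,B$ has modulus strictly less. Rouch\'e gives a unique zero $y=y(t,a)$ in $\{|y|<\epsilon\}$, and since $\partial_y w = (2+tA) + y\,t\,\partial_y A$ stays bounded below in modulus (by $c/2$, say, after possibly shrinking $\epsilon$ or $\delta$ slightly), this zero is simple and the implicit function theorem produces a holomorphic $y(t,a)$. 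Reverting to $x=1/t$ gives the desired graph. The only real subtlety is the first step—ruling out vertical components $\{x=p^{-k}(0)\}$ in the region—which is already handled by the Green's-function-based choice of $\alpha$.
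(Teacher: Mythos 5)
The paper states this corollary without proof, so there is nothing to compare against directly; it is presented as an immediate consequence of Lemma \ref{lemma_infinity} and Corollary \ref{degenerate-critical-locus}. Your Rouch\'e argument is a reasonable and correct way to supply the missing uniformity-in-$t$ argument, and the overall structure (pass to $t=1/x$, identify the defining function $w=2y+th_6$ on the compact polydisk, factor out the $a=0$ zero locus $\{y=0\}$, then perturb) is sound.

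However, one step is asserted but not actually established. You write that because $w(t,y,0)=y\bigl(2+tA(t,y,0)\bigr)$ has zero set $\{y=0\}$ in $P$, the factor $g(t,y):=2+tA(t,y,0)$ must be nowhere vanishing on $P$. As stated, that does not follow: the zero set condition only rules out zeros of $g$ at points with $y\neq 0$, and a priori $g$ could vanish somewhere on $\{y=0\}$ without adding extra zeros to $yg$. This is exactly the claim that $\{y=0\}$ is a \emph{simple} zero of $w(\cdot,\cdot,0)$, and it is the heart of the matter. It is in fact true, and there are two clean ways to justify it. One is dimensional: if $g(t_0,0)=0$ then, since zero sets of holomorphic functions of two variables are pure one-dimensional, there is a curve of zeros of $g$ through $(t_0,0)$; it cannot lie in $\{y=0\}$ (else $g\equiv 0$ on $\{y=0\}$, contradicting $g(0,0)=2$, which comes from the tangent-line computation of Lemma \ref{lemma_infinity}), so it carries points with $y\neq 0$, contradicting the known zero set. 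The other is direct computation: at $a=0$, $\phi_{0,+}=b_p(x)$ and $\phi_{0,-}^2=p(y)-x$, so $d\phi_{0,+}\wedge d\phi_{0,-}^2 = 2y\,b_p'(x)\,dx\wedge dy$, giving $w(t,y,0)=2y\,b_p'(1/t)$ and hence $g(t,y)=2b_p'(1/t)$, which is nonvanishing because $b_p$ is a biholomorphism on $\{G_p> G_p(0)\}\supset\{|x|>\alpha\}$. Either of these closes the gap; without one of them, the Rouch\'e step has nothing to lean on. A secondary, smaller point: the estimate you use to conclude simplicity of the zero via $\partial_y w$ is redundant (Rouch\'e already gives multiplicity one since $y(2+tA)$ has a simple zero), and the aside about ``possibly shrinking $\epsilon$'' should be avoided since $\epsilon$ is fixed by the statement; the corollary should instead be read on $\{|y|<\epsilon\}$ or handled by applying Rouch\'e on a slightly larger circle.
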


\section{Horizontal and vertical invariant cones.}

\subsection{Horizontal cones.}

Fix a domain $B$.

\begin{definition} A family of cones $C_x$ in the tangent bundle to $B$ is
$f_a$-invariant iff for any point $x\in B$, such that $f_a(x)\in
B$, we have $df_a(C_x)\cup C_{f(x)}$.
\end{definition}

\begin{lemma}\label{horizontal} Fix $r'>r$ and $\beta$. For every
$C<\min \{x : G_p(x)\leq \frac{r'}{2}\}$ there exists $\delta$
such that for all $|a|<\delta$ the family of horizontal cones
$|\xi|>C|\eta|$ is $f_a$-invariant in $\{G_a^+(x,y)\leq r'\}\cap
\{|y|\leq \beta\}$ (where $(\xi,\eta)\in T_{(x,y)}\mathbb C^2$).
\end{lemma}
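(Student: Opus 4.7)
The plan is to convert the cone-inclusion condition into a pointwise inequality on $|x|$ via the explicit formula for $Df_a$, and then to secure that inequality uniformly on the relevant subregion using the joint continuity of $G_a^+$ in $(x,y,a)$ proved in Section~\ref{sec:Green_function} together with the hypothesis on $C$.

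First I would compute $Df_a(\xi,\eta)=(2x\xi-a\eta,\xi)$. For a horizontal cone vector with $|\xi|>C|\eta|$ the triangle inequality yields $|2x\xi-a\eta|\ge(2|x|-|a|/C)|\xi|$, while the second component of $Df_a(\xi,\eta)$ is just $\xi$. Hence $Df_a$ sends the horizontal cone at $(x,y)$ into the horizontal cone at $f_a(x,y)$ as soon as
\[
2|x|\;>\;C+\frac{|a|}{C}. \qquad (\ast)
\]

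Next I would identify the set on which $(\ast)$ actually has to hold. By the definition of cone invariance, only base points $(x,y)\in B:=\{G_a^+\le r'\}\cap\{|y|\le\beta\}$ whose image $f_a(x,y)$ also lies in $B$ are relevant. Since the second coordinate of $f_a(x,y)$ is $x$, the image condition forces $|x|\le\beta$; and the functional equation $G_a^+\circ f_a=2G_a^+$ forces $G_a^+(x,y)\le r'/2$. Thus every relevant base point lies in the compact set
\[
S_a:=\{(x,y):G_a^+(x,y)\le r'/2,\ |x|\le\beta,\ |y|\le\beta\}.
\]
Specializing to $a=0$ and using $G_0^+(x,y)=G_p(x)$, the hypothesis reads $C<m$, where $m:=\min\{|x|:G_p(x)\le r'/2\}$, so every point of $S_0$ satisfies $|x|\ge m>C$.

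The main step is to upgrade this $a=0$ bound to all sufficiently small $|a|$: for any fixed $\epsilon\in\bigl(0,(m-C)/2\bigr)$ there exists $\delta_1>0$ such that $|a|<\delta_1$ and $(x,y)\in S_a$ imply $|x|>m-\epsilon$. I would prove this by contradiction and compactness: a failure would produce sequences $a_n\to 0$ and $(x_n,y_n)\in S_{a_n}$ with $|x_n|\le m-\epsilon$; compactness of the box $\{|x|,|y|\le\beta\}$ yields a convergent subsequence with limit $(x^*,y^*)$, and joint continuity of the Green function from Section~\ref{sec:Green_function} forces $G_p(x^*)=G_0^+(x^*,y^*)\le r'/2$ together with $|x^*|\le m-\epsilon$, contradicting the definition of $m$. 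With the uniform bound $|x|>m-\epsilon>C$ in hand, shrinking $\delta$ further so that $|a|/C<2(m-\epsilon)-C$ secures $(\ast)$ on $S_a$ for all $|a|<\delta$, finishing the argument. The main obstacle is really just this compactness/continuity step; everything else is a mechanical consequence of the formula for $Df_a$ and the functional equation, and it is precisely the $a$-continuity of the Green function established earlier that makes the limiting argument go through.
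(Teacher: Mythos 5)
The proposal is correct and takes essentially the same route as the paper's proof: compute $Df_a$, reduce the cone-inclusion to the pointwise inequality $2|x|>C+|a|/C$ via the triangle inequality, observe through the functional equation $G_a^+\circ f_a=2G_a^+$ that any relevant base point must satisfy $G_a^+\leq r'/2$ (hence $|x|$ is bounded below by the size of that Green sublevel set), and choose $\delta$ accordingly. Your write-up is in fact cleaner and more complete than the paper's terse argument; in particular the compactness-plus-joint-continuity step, which upgrades the $a=0$ bound $|x|\geq m$ to the uniform bound $|x|>m-\epsilon$ on $S_a$ for small $|a|$, is precisely the justification the paper leaves implicit when it simply sets $C$ and $\delta$ in terms of the $a=0$ geometry.
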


\begin{note} Note that we chose the box $\{G_a^+(x,y)\leq r'\}\cap
\{|y|\leq \beta\}$ so that the tip of the parabola does not belong
to the box. This allows us to have an invariant horizontal family
of cones.
\end{note}

\begin{proof} $Df_a(x,y)=\left(\begin{array}{ll} 2x & a\\ 1 & 0
\end{array}\right)$

Let $\left(\begin{array}{l}\xi_1\\\eta_1
\end{array}\right)=\left(\begin{array}{cc} 2x & a\\ 1 & 0
\end{array}\right)\left(\begin{array}{l}\xi \\ \eta\end{array}\right)$

We want to find $C$ such that $$|\xi|>C|\eta| \Rightarrow |2x
\xi+a \eta|>C|\xi|.$$

Take $C=\min(2x)-\epsilon,$ where $b_p(x)\leq \frac{r'}{2}$ and
$\epsilon$ is any number. Then $\delta_2=\epsilon \max(2x)$.
\end{proof}

\subsection{Vertical cones.}

\begin{lemma}\label{vertical} Fix $r'\geq r,$ $\alpha,$ $C$. There
exists $\delta$ such that for all $|a|<\delta$ the family of cones
$|\xi|<C|a||\eta|$ is $f_a^{-1}$-invariant in $\{G_a^+(x,y)\leq
r'\}\cap\{|y|\leq \alpha\}$, $(\xi,\eta)\in T_{(x,y)}\mathbb C^2$.
\end{lemma}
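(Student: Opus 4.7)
The plan is to mimic the proof of Lemma \ref{horizontal}, with the roles of the two coordinates swapped and the factor of $1/a$ coming from $Df_a^{-1}$. First I would compute $Df_a^{-1}$ at $(x,y)$: since the first coordinate of $f_a^{-1}(x,y)$ equals $y$,
$$
Df_a^{-1}\Big|_{(x,y)} = \begin{pmatrix} 2y & a \\ 1 & 0 \end{pmatrix}^{-1} = \begin{pmatrix} 0 & 1 \\ 1/a & -2y/a \end{pmatrix},
$$
so a tangent vector $(\xi,\eta) \in T_{(x,y)}\mathbb{C}^2$ is mapped to $(\xi',\eta') := (\eta,\, (\xi - 2y\eta)/a) \in T_{f_a^{-1}(x,y)}\mathbb{C}^2$. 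The cone condition $|\xi'| < C|a||\eta'|$ at the preimage becomes $|\eta| < C|\xi - 2y\eta|$. Using the hypothesis $|\xi| < C|a||\eta|$ and the reverse triangle inequality,
$$
|\xi - 2y\eta| \;\geq\; 2|y||\eta| - |\xi| \;>\; (2|y| - C|a|)|\eta|,
$$
so it suffices to establish the scalar inequality $C(2|y| - C|a|) > 1$ on the set where both $(x,y)$ and $f_a^{-1}(x,y)$ lie in the domain.

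The key step is then a uniform lower bound on $|y|$ on this set. Since $f_a^{-1}(x,y) = (y,(p(y)-x)/a)$ must itself satisfy $|(p(y)-x)/a| \leq \alpha$, the point $(x,y)$ is trapped in an $\alpha|a|$-neighborhood of the invariant parabola $\{x = p(y)\}$. By the continuity of $G_a^+$ in $(x,y,a)$ from Section \ref{sec:Green_function} and the identity $G_0^+(x,y) = G_p(x)$, the bound $G_a^+(x,y) \leq r'$ forces, on this tube, $G_p(p(y)) \leq r' + o(1)$, i.e., $G_p(y) \leq r'/2 + o(1)$ as $a \to 0$. By the choice $r < G_p(c) = 2G_p(0)$ from Section \ref{strategy} (with $r'$ chosen in $[r,\, 2G_p(0))$), we have $r'/2 < G_p(0)$, so the compact set $\{G_p \leq r'/2\}$ does not contain the critical point $y = 0$, and is therefore bounded away from it by some $d_0 > 0$.

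Finally, I would pick $\delta$ so small that for all $|a| < \delta$ the $o(1)$ error is absorbed, the lower bound $|y| \geq d_0/2$ persists on the effective domain, and $C|a| \leq d_0/2$; then $C(2|y| - C|a|) \geq Cd_0/2 > 1$ as soon as $C > 2/d_0$, which is the implicit regime in which the lemma is applied. The linear-algebra step is routine. The hard part will be the lower bound on $|y|$: the naive estimate fails near the critical value $(c,0)$, and rescuing it relies essentially both on the $\alpha|a|$-tube geometry forcing confinement to a neighborhood of the parabola and on the choice of $r$ between the critical point and critical value levels of $G_p$.
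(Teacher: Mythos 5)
Your proof is correct and follows the same overall strategy as the paper's: reduce to the scalar inequality $C(2|y|-C|a|)>1$ via $Df_a^{-1}$, then obtain a uniform lower bound on $|y|$ for points $(x,y)$ at which the invariance is tested. The sign discrepancies in your Jacobian are immaterial (the paper's own display in Lemma \ref{horizontal} has the analogous sign slip). The one place you take a genuinely different route is the lower bound on $|y|$: you force $(x,y)$ into the $\alpha|a|$-tube around $C(p)$ using $f_a^{-1}(x,y)\in B$ and then invoke the one-variable relation $G_p(p(y))=2G_p(y)$, whereas the paper (extremely tersely) goes directly through the two-variable functional equation $G_a^+\circ f_a^{-1}=\tfrac12 G_a^+$: since $G_a^+(x,y)\le r'$ one gets $G_a^+(f_a^{-1}(x,y))\le r'/2$, and because the first coordinate of $f_a^{-1}(x,y)$ is $y$ and $G_a^+\to G_p(\text{first coordinate})$ uniformly on compacts as $a\to 0$, this gives $G_p(y)\lesssim r'/2<G_p(0)$ without any appeal to the tube. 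Both routes end at the same compactness argument, and yours is valid. You are also right to flag the two implicit hypotheses, $r'<G_p(c)=2G_p(0)$ and $C$ large enough (roughly $C>1/(2\inf|y|)$); the paper omits them here (and its one-line assertion ``then $|y|>C_1$'' appears to have an $x\leftrightarrow y$ mix-up in its notation), but they mirror the restriction on $C$ that is stated explicitly in Lemma \ref{horizontal}.
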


\begin{proof} $D f^{-1}_{a}=\left(\begin{array}{cc}0 & 1 \\ -\frac{1}{a} & \frac{2y}{a}\end{array}\right)$

Let $\left(\begin{array}{c}\xi_1 \\ \eta_1\end{array}\right)=\left(\begin{array}{cc}0 & 1 \\
-\frac{1}{a} &
\frac{2y}{a}\end{array}\right)\left(\begin{array}{c}\xi \\
\eta\end{array}\right)$

$\left|\frac{\xi_1}{\eta_1}\right|=\frac{|a|}{|-\frac{\xi}{\eta}+2y|}$

Suppose $(x,y)= f^{-1}_a(u,v)$, where $\{(u,v) |\ G_a^+(u,v)\leq
r', |v|\leq |\alpha|\}$, then $|y|>C_1$.

Therefore, when $|a|<\delta$, $|2y-\frac{\xi}{\eta}|\geq |C_1| $
\end{proof}

\section{Description of ${\cal F}_a^-$.}\label{F-}

In this section we give a description of ${\cal F}_a^-$ in

$$
W \cap \{|p(y)-x|\geq |a|\alpha\}.
$$

We choose $\alpha$ in the definition of $W$ such that the
description of ${\cal F}_a^-$ is especially nice.

The function $\phi^2_{a,-}$ is well-defined in this region, so it
is natural to expect that the foliation
$${\cal F}_a^-=\{\phi^2_{a,-}=\mbox{const}\}$$

\noindent is close to the foliation

$${\cal F}_0^-=\{\phi^2_{0,-}=p(y)-x=\mbox{const}\}.$$

\noindent The only region where it really needs to be checked is
when we approach $a|\alpha|$-neighborhood of $C_p$.

We also prove that the leaves of ${\cal F}_a^-$ that intersect the
boundaries

$$\{G_a^+\leq r\}\cap \{|p(y)-x|=a|\alpha|\}$$

$$\{G_a^+\leq r\}\cap \{|y|\leq |\alpha|\}$$

\noindent are horizontal-like. In order to guarantee this we will
need to choose appropriate $\alpha$.

We start by fixing preliminary $\tilde{\alpha}$ such that
conditions (\ref{1}) and (\ref{2}) are satisfied.

Notice that $\phi^2_{a,-}$ is well-defined in
$$f_a(V_-)=\{|p(y)-x|\geq |a|\tilde{\alpha}, |p(y)-x|\geq
|a||y|\}$$.

Therefore, the domain of definition of $\phi^2_{a,-}$ in $W$ is

$$f_a(V_{-})\cap W=\{|p(y)-x|\geq |a|\tilde{\alpha} \}.$$

\begin{lemma} There exists $\alpha$ such for all
$a\in D_R$

$$\min \{|\phi^2_{a,-}(x,y)| :\ (x,y)\in W, |p(y)-x|=
\alpha|a|\}>$$

$$\max \{|\phi^2_{a,-}(x,y)| :\ (x,y) \in W,
|p(y)-x|=\tilde{\alpha}|a|\}.$$
\end{lemma}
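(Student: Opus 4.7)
The plan is to exploit the functional relation $\phi^2_{a,-}(x,y) = a\,\phi_{a,-}(f_a^{-1}(x,y))$ combined with the two-sided bound $B^{-1}|y| < |\phi_{a,-}(x,y)| < B|y|$ from Lemma~\ref{existence_phi-}. Introducing $w=(p(y)-x)/a$ so that $f_a^{-1}(x,y)=(y,w)$, the locus $\{|p(y)-x|=\rho|a|\}$ becomes $\{|w|=\rho\}$, and whenever $(y,w)\in V_-$ Lemma~\ref{existence_phi-} gives
\[
\frac{|a|\rho}{B}\;<\;|\phi^2_{a,-}(x,y)|\;<\;B|a|\rho.
\]
Specializing $\rho=\alpha$ on the outer circle and $\rho=\tilde\alpha$ on the inner circle produces a lower bound $|a|\alpha/B$ on the minimum and an upper bound $B|a|\tilde\alpha$ on the maximum, so the inequality of the lemma follows as soon as $\alpha>B^{2}\tilde\alpha$.

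It remains to verify the hypothesis $(y,w)\in V_-$ on each circle. On the outer circle the bound is automatic: $|y|\leq\alpha=|w|$ and $|w|\geq\tilde\alpha$. On the inner circle the condition $|w|=\tilde\alpha>|y|$ can fail, namely on the sub-arc $\tilde\alpha<|y|\leq\alpha$. On this sub-arc I would iterate $f_a^{-1}$ once more to $(w,u)$ with $u=(p(w)-y)/a$: the modulus $|u|$ is of order $\tilde\alpha^{2}/|a|$, which is large for small $|a|$, so $(w,u)\in V_-$. Using $\phi^{4}_{a,-}=a^{3}\phi_{a,-}\circ f_a^{-2}$ from Lemma~\ref{lim_value2}, the same Lemma~\ref{existence_phi-} bound applied to $\phi_{a,-}(w,u)$ yields $|\phi^{4}_{a,-}|\leq B|a|^{2}(\tilde\alpha^{2}+|c|+\alpha)$, and taking square roots furnishes $|\phi^{2}_{a,-}|\leq B'|a|$ with $B'=\sqrt{B(\tilde\alpha^{2}+|c|+\alpha)}$. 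One then chooses $\alpha$ satisfying both $\alpha>B^{2}\tilde\alpha$ and $\alpha>BB'$; the latter reduces to a quadratic inequality $\alpha^{2}>B^{3}(\tilde\alpha^{2}+|c|+\alpha)$, which holds for $\alpha$ sufficiently large.

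The main obstacle is precisely this secondary argument on the sub-arc $\tilde\alpha<|y|\leq\alpha$ of the inner circle, where a single backward iterate exits $V_-$ and one must pass to the second iterate and to the higher-power function $\phi^{4}_{a,-}$. Once this iterated bound is in place, the remainder of the proof is a routine comparison of two explicit bounds, and shrinking $R$ if necessary guarantees that the twice-iterated point lies in $V_-$ uniformly for $a\in D_R$.
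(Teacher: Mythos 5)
Your proposal starts from the same functional relation the paper uses, $\phi^2_{a,-}(x,y)=a\,\phi_{a,-}(y,(p(y)-x)/a)$, but then diverges in two ways, and one of them opens a genuine gap.

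The paper's proof uses the asymptotic $\phi_{a,-}(x,y)\sim y$ as $y\to\infty$ (from Lemma~\ref{phi-infinity}) instead of your cruder two-sided $B$-bound, and — more importantly — it bounds the first (new) coordinate not by $\alpha$ but by $\theta=\max|p^{-1}(D_{\alpha(1+|a|)})|$, extracted from the condition $|x|\le\alpha$. For a quadratic $p$ this gives $\theta\sim\sqrt{\alpha}$, which is far smaller than $\alpha$. This refinement is exactly what you are missing: with $|y|<\theta$ and $\theta<\tilde\alpha$ (i.e.\ $\alpha<\tilde\alpha^2$, a range that is nonempty once $\tilde\alpha$ is chosen large enough), both preimage circles lie inside $V_-$ and no secondary iteration is ever needed. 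You, working only with the weaker constraint $|y|\le\alpha$, are forced to invent the problematic sub-arc $\tilde\alpha<|y|\le\alpha$, and then to patch it by iterating $f_a^{-1}$ a second time.

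That patch is where the gap lies. To conclude $(w,u)\in V_-$ you need $|p(w)-y|$ bounded away from zero, but you bound $|y|$ only by $\alpha$, while $|p(w)|\sim\tilde\alpha^2$; if $\alpha\gtrsim\tilde\alpha^2$ then $y$ can cancel $p(w)$ and $u$ is not large. Your final prescription — "choose $\alpha$ sufficiently large" to meet $\alpha^2>B^3(\tilde\alpha^2+|c|+\alpha)$ — thus pushes $\alpha$ into precisely the regime where the iterated step breaks. The constraints are not shown to be simultaneously satisfiable, and the needed reconciliation (first enlarge $\tilde\alpha$, then locate $\alpha$ in a window $B^2\tilde\alpha<\alpha<\tilde\alpha^2-|c|-R\tilde\alpha$) is not carried out. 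Finally, the appeal to "shrinking $R$ if necessary" is not available: $R$ is fixed at the start of Section~\ref{bottcher_coordinates}, and the lemma quantifies over all $a\in D_R$ with $R$ given, so $R$ is not a free parameter in this proof.
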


\begin{proof}

$\phi^{2}_{a,-}(x,y)=a\phi_{a,-}(y,\frac{p(y)-x}{a})$.

\noindent $\phi_{a,-}(x,y)\sim y$ as $y\to \infty$, Therefore,
$$\min \{|\phi_{a,-}(x,y)| :\ |x|<\theta,|y|=\alpha \}
>\max \{|\phi_{a,-}(x,y)| :\ |x|<\theta, |y|=\tilde{\alpha}\}
$$

\noindent for big enough $\alpha$. Take
$\theta=\max(p^{-1}(D_{|x|+|a|\alpha}))$.

\end{proof}

\begin{corollary} For a point $q\in W \cap
\{|p(y)-x|\geq |a|\alpha\}$ a connected component of a leaf $L_q$
of the foliation ${\cal F}_a^-$, passing through a point $q$,
stays outside of $\tilde{\alpha}|a|$-neighborhood of $C_p$.
\end{corollary}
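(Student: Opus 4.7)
My plan is to use constancy of $\phi_{a,-}^2$ along leaves of ${\cal F}_a^-$ combined with the strict min/max separation from the preceding lemma. Concretely, let $q \in W \cap \{|p(y)-x|\geq |a|\alpha\}$ and set $C = \phi_{a,-}^2(q)$. The connected component $L$ of $L_q$ through $q$ lies in the domain of definition of $\phi_{a,-}^2$, namely $W \cap \{|p(y)-x|> |a|\tilde{\alpha}\}$, and is contained in the level set $\{\phi_{a,-}^2 = C\}$. In particular $|\phi_{a,-}^2| \equiv |C|$ on $L$.

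Suppose for contradiction that $L$ enters the $\tilde{\alpha}|a|$-neighborhood of $C_p$. Since $|p(y)-x|$ equals $|a|\alpha$ or more at the starting point $q$, connectedness of $L$ together with the intermediate value theorem (applied to the continuous function $|p(y)-x|$ restricted to $L$) forces $L$ to cross the curve $\{|p(y)-x| = |a|\alpha\}$ at some point $q_1$. Similarly, by further continuity and by passing to a boundary limit if necessary (continuity of $\phi_{a,-}^2$ up to $|p(y)-x| = |a|\tilde{\alpha}$ is built into its defining series), the closure of $L$ meets $\{|p(y)-x| = |a|\tilde{\alpha}\}$ at some $q_2$. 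Then
\[
|C| \;=\; |\phi_{a,-}^2(q_1)| \;\geq\; \min\bigl\{|\phi_{a,-}^2(x,y)| :\ (x,y)\in W,\ |p(y)-x|=|a|\alpha\bigr\},
\]
while
\[
|C| \;=\; |\phi_{a,-}^2(q_2)| \;\leq\; \max\bigl\{|\phi_{a,-}^2(x,y)| :\ (x,y)\in W,\ |p(y)-x|=|a|\tilde{\alpha}\bigr\}.
\]
The previous lemma states that the first quantity strictly exceeds the second, contradicting $|C|=|C|$.

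There is no real obstacle here: the corollary is essentially a direct consequence of the strict inequality in the preceding lemma, once one notices that $L$ is contained in a single level set of $\phi_{a,-}^2$. The only mildly delicate point is handling the possibility that $L$ merely accumulates on, rather than touches, the boundary curve $\{|p(y)-x| = |a|\tilde{\alpha}\}$; this is resolved by taking limits and using that $\phi_{a,-}^2$ extends continuously from $f_a(V_-)\cap W$ up to this boundary.
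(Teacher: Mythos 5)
Your argument is correct and is the intended (implicit) one: the paper states this as an immediate corollary with no separate proof, and the mechanism is exactly what you describe — leaves of ${\cal F}_a^-$ in $W \cap \{|p(y)-x|\geq |a|\tilde{\alpha}\}$ are level sets of $\phi_{a,-}^2$, so $|\phi_{a,-}^2|$ is constant along $L$; crossing both circles would force $\min \leq |C| \leq \max$, contradicting the strict inequality of the preceding lemma. Your closing caveat about boundary accumulation is harmless but not really needed, since $\phi_{a,-}^2$ is already defined on the closed region $\{|p(y)-x|\geq |a|\tilde\alpha\}\cap W$ (so $q_2$ can be taken as an actual crossing point rather than a limit), and if $L$ never reaches $\{|p(y)-x|=|a|\tilde\alpha\}$ the conclusion holds vacuously.
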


\begin{lemma} For all $a\in D_R$,
$\frac{\partial \phi_{a,-}^2/\partial y}{\partial
\phi_{a,-}^2/\partial x}$ is $a$-close to $\frac{\partial
\phi_{0,-}^2/\partial y}{\partial \phi_{0,-}^2/\partial x}=-p'(y)$
in $\Omega$.
\end{lemma}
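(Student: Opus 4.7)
The plan is to exploit the representation $\phi_{a,-}^{2}(x,y) = a\,\phi_{a,-}\bigl(y,\,(p(y)-x)/a\bigr)$ from Lemma \ref{lim_value2} and apply the chain rule directly. Writing $\Phi(u,v)=\phi_{a,-}(u,v)$ and using $\partial v/\partial x = -1/a$, $\partial v/\partial y = p'(y)/a$, one obtains
$$
\partial_x \phi_{a,-}^{2} = -\,\partial_v \Phi, \qquad \partial_y \phi_{a,-}^{2} = a\,\partial_u \Phi + p'(y)\,\partial_v \Phi,
$$
so the factors of $a$ cancel in the ratio, yielding the clean identity
$$
\frac{\partial \phi_{a,-}^{2}/\partial y}{\partial \phi_{a,-}^{2}/\partial x} \;=\; -p'(y)\;-\;a\,\frac{\partial_u \Phi}{\partial_v \Phi}.
$$
Thus the claim reduces to a uniform bound on $|\partial_u \Phi/\partial_v \Phi|$ at the arguments $\bigl(y,(p(y)-x)/a\bigr)$ arising from $(x,y)\in\Omega_a$ and $|a|<\delta$.

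The defining conditions $|y|\le \alpha$ and $|p(y)-x|\ge |a|\alpha$ of $\Omega_a$ are precisely what is needed to place $\bigl(y,(p(y)-x)/a\bigr)\in V_-$ with second coordinate of modulus at least $\alpha$. On $V_-\times D_R$, Lemma \ref{limiting_value} provides the decomposition $\Phi(u,v) = (p(v)-u)^{1/2} + a\,h(u,v,a)$; differentiating gives
$$
\partial_v \Phi = \frac{p'(v)}{2(p(v)-u)^{1/2}} + a\,\partial_v h, \qquad \partial_u \Phi = -\frac{1}{2(p(v)-u)^{1/2}} + a\,\partial_u h.
$$
Since $p(v)-u\sim v^2$ for $|v|\to\infty$ and $p'(v)\sim 2v$, the leading parts give $\partial_v \Phi\to 1$ and $\partial_u \Phi = O(1/|v|)$, so the ratio $\partial_u\Phi/\partial_v\Phi$ is $O(1/\alpha)$ uniformly on $\{|v|\ge \alpha\}\subset V_-$.

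The main obstacle is uniformity of the $O(a)$ remainders $a\,\partial_u h$ and $a\,\partial_v h$ as one approaches the boundary $|p(y)-x|=|a|\alpha$ of $\Omega_a$. This is handled by a Cauchy estimate on $h$: since $h$ is holomorphic and uniformly bounded on any relatively compact subdomain of $V_-\times D_R$, Cauchy's inequality applied on a polydisk of fixed radius (say $\alpha/2$) about each $(u,v,a)$ with $|v|\ge \alpha$ and $|a|<\delta$ — a polydisk which remains inside $V_-\times D_R$ — yields uniform bounds on $\partial_u h$ and $\partial_v h$. Hence for $|a|$ small these $a$-corrections do not perturb the asymptotics: $\partial_v\Phi$ stays bounded away from zero and $\partial_u\Phi = O(1/|v|)$ uniformly. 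Consequently, the ratio $\partial_y\phi_{a,-}^{2}/\partial_x\phi_{a,-}^{2}$ differs from $-p'(y)$ by a term of size $O(|a|/\alpha)$, uniformly on $\Omega_a$, which is the asserted $a$-closeness.
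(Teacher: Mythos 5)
Your chain-rule reduction to the clean identity
\[
\frac{\partial \phi_{a,-}^{2}/\partial y}{\partial \phi_{a,-}^{2}/\partial x}
\;=\;-p'(y)\;-\;a\,\frac{\partial_u \Phi}{\partial_v \Phi}
\]
is correct and is precisely how the paper begins. After that you diverge: the paper passes to the blow-up coordinates $(u,y,v)=(p(y)-x,\,y,\,a/(p(y)-x))$, observes that the relevant ratio lifts to a holomorphic function on a genuine polydisk $\{|y|<\alpha,\,|u|<\epsilon,\,|v|<1/\alpha\}$ which agrees with $-p'(y)$ on the exceptional set, and concludes divisibility by $a=uv$; you instead substitute the expansion $\Phi(u,v)=(p(v)-u)^{1/2}+a\,h(u,v,a)$ from Lemma \ref{limiting_value} and try to control the $a$-corrections by Cauchy estimates. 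That is a reasonable alternative route, but as written it has a gap in the uniformity argument.

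The problem is your Cauchy-estimate step. You need uniform bounds on $\partial_u h,\partial_v h$ over all points $(u,v,a)=(y,(p(y)-x)/a,a)$ with $(x,y)\in\Omega_a$ and $|a|<\delta$. This set of $(u,v)$ has $|u|\le\alpha$ and $|v|\ge\alpha$, and as $a\to 0$ it reaches $|v|\to\infty$. Two issues: (i) a polydisk of radius $\alpha/2$ about such a point does \emph{not} stay inside $V_-\times D_R$ near the corner $|u|\approx|v|\approx\alpha$, because the defining condition $|v|>|u|$ of $V_-$ is violated; (ii) more seriously, ``$h$ is bounded on relatively compact subdomains'' plus Cauchy on a fixed-radius polydisk gives a uniform bound only when the polydisk ranges over a \emph{fixed} relatively compact subdomain, which fails as $|v|\to\infty$. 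Lemma \ref{limiting_value} alone does not tell you that $h$ is bounded (or that its derivatives are) on the unbounded set $\{|u|\le\alpha,\,|v|\ge\alpha\}$; holomorphy on $V_-\times D_R$ is compatible with growth in $|v|$. To close the gap you should invoke Lemma \ref{phi-infinity}: $\phi_{a,-}$ extends holomorphically to $v=\infty$, with $\partial_u\phi_{a,-}=O(1/|v|)$ and $\partial_v\phi_{a,-}\to 1$, uniformly in $(u,a)$ on the compact (in the $1/v$ chart) set at issue. Combined with the buffer $\alpha>\tilde\alpha$ between the $\Omega_a$-threshold and the $V_-$-threshold, this gives the needed uniform bound $\partial_u\Phi/\partial_v\Phi=O(1/\alpha)$, and then the $O(|a|/\alpha)$-closeness follows. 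With these repairs your argument would be a valid, more computational alternative to the paper's blow-up.
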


\begin{note} Note that $\frac{\partial \phi^2_{a,-}}{\partial y}, \frac{\partial \phi^2_{a,-}}{\partial
y}$ do not stay bounded in $\Omega$ as $a\to 0$, but their ratio
does.
\end{note}

\begin{proof} We chose $\alpha$ so that the function $\phi^2_{a,-}$ is
well-defined in $\{G_a^+\leq r \}\cap \{|a|\tilde{\alpha} \leq
|p(y)-x|\leq \kappa\}$ and that the leaves of ${\cal F}_a^-$ we
consider do not leave this neighborhood.

Let us show that in this region $\frac{\partial
\phi_{a,-}^2/\partial y}{\partial \phi_{a,-}^2/\partial x}$ is
$a$-close to $\frac{\partial \phi_{0,-}^2/\partial y}{\partial
\phi_{0,-}^2/\partial x}$.

$$\frac{\partial \phi_{a,-}^2}{\partial y}(x,y)=
\frac{\partial}{\partial y}\left(a\phi_{a,-}(y,
\frac{p(y)-x}{a})\right)= a\frac{\partial \phi_{a,-}}{\partial
x}\left(y,\frac{p(y)-x}{a}\right)+$$

$$p'(y)\frac{\partial \phi_{a,-}}{\partial
y}\left(y,\frac{p(y)-x}{a}\right)$$

$$\frac{\partial \phi_{a,-}^2}{\partial
x}(x,y)=\frac{\partial}{\partial x}\left(a
\phi_{a,-}(y,\frac{p(y)-x}{a})\right)=-\frac{\partial
\phi_{a,-}}{\partial y}$$

Let us do a change of coordinates
$(u,y,v)=(p(y)-x,y,\frac{a}{p(y)-x})$. First, we introduce the
$u$-coordinate that measures the distance to parabola. Then we do
a blow-up in each line $y=const$ that blows-up a cone in
$u,a$-coordinates, which corresponds to the compliment of
$|a|\alpha$ neighborhood, to a polydisk in $u,v$ coordinates.

\begin{figure}[h]
\centering
\psfrag{|u|=alpha|a|}{$|u|=\alpha|a|$}\psfrag{1/alpha}{$\frac{1}{\alpha}$}
\psfrag{u}{$u$}\psfrag{a}{$a$}\psfrag{v}{$v$}
\includegraphics[height=5cm]{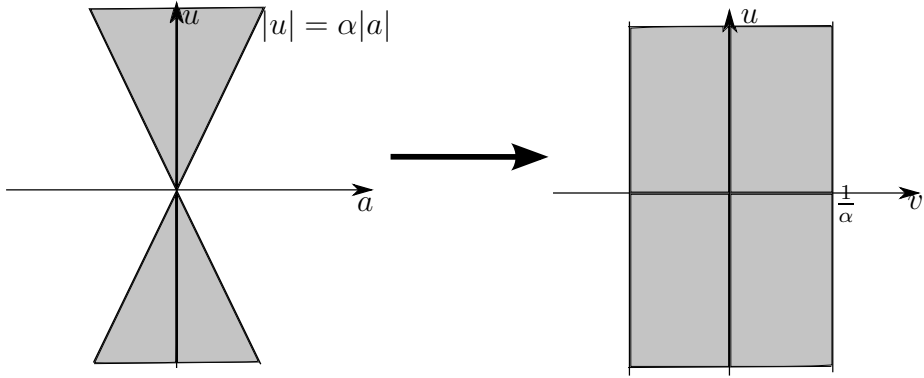}
\caption{The blow-up}
\end{figure}

Denote by $\tilde{\phi}^2_-(u,y,v)=\phi^2_{uv,-}(p(y)-u, y)$. We
show that a function $\frac{\partial \tilde{\phi}^2_{-}/\partial
y}{\partial \tilde{\phi}^2_{-}/\partial x}$ lifts to a holomorphic
function on the blown-up space $\{|y|<\alpha, |u|<\epsilon,
|v|<\frac{1}{\alpha}\}$.

$$a\frac{\partial \phi_{a,-}}{\partial x}(y,\frac{p(y)-x}{a})=uv\frac{\partial \phi_{uv,-}}{\partial x}(y,\frac{1}{v})$$
$$\lim_{u\to 0} uv\frac{\partial \phi_{uv,-}}{\partial
x}(y,\frac{1}{v})=0$$
$$
\lim_{u\to 0} \frac{\partial \tilde{\phi}^2_{-}/\partial
y}{\partial \tilde{\phi}^2_{-}/\partial x}=-p'(y).
$$
Thus, $\frac{\partial \phi_{a,-}^2/\partial y}{\partial
\phi_{a,-}^2/\partial x}$ is $a$-close to $\frac{\partial
\phi_{0,-}^2/\partial y}{\partial \phi_{0,-}^2/\partial x}$ in
$\Omega\cap \{|p(y)-x|\geq |a|\alpha\}$.
\end{proof}

Let $L_q$ be a leaf of foliation ${\cal F}_a^-$ that passes
through a point $q$.

\begin{corollary}\label{horizontal_cones_F-} There exist $\kappa$ and $\delta$ such that for all $|a|<\delta$
and all
$$q\in \{G_a^+\leq r \}\cap \{|p(y)-x|\geq |a|\alpha\}\cap
\{|y|\geq \kappa\}$$ a connected component of $L_q$ is
horizontal-like.
\end{corollary}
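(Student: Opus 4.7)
The plan is to deduce this directly from the preceding lemma. A tangent vector $(\xi,\eta)$ to the leaf $L_q$ at $(x,y)$ lies in $\ker d\phi_{a,-}^2$, so $\xi \,\partial_x \phi_{a,-}^2 + \eta \,\partial_y \phi_{a,-}^2 = 0$; equivalently
$$
\frac{\xi}{\eta} = -\frac{\partial_y \phi_{a,-}^2}{\partial_x \phi_{a,-}^2}.
$$
By the preceding lemma, $\partial_y \phi_{a,-}^2 /\partial_x \phi_{a,-}^2$ is $a$-close to $-p'(y) = -2y$ throughout the region $\{G_a^+ \leq r\} \cap \{|p(y)-x|\geq |a|\alpha\}$, so $|\xi/\eta|$ is $a$-close to $|p'(y)| = 2|y|$.

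To say $L_q$ is horizontal-like means that its tangent lies in a horizontal cone $\{|\xi|>C|\eta|\}$, with $C$ the constant from Lemma \ref{horizontal} (so that the cone family is $f_a$-invariant and matches the rest of the paper). Given that $C$, I would choose $\kappa$ so that $2\kappa\geq C+1$, and then $\delta$ so small that the $a$-error from the preceding lemma is bounded by $1$ in absolute value for all $|a|<\delta$ on the region. Then at every point of the region with $|y|\geq\kappa$ we get $|\xi/\eta|\geq 2|y|-1\geq C$, which places the tangent inside the prescribed horizontal cone.

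This is pointwise horizontal-ness; to obtain a horizontal-like connected component I would argue that the connected component of $L_q$ in $\{G_a^+\leq r\}\cap\{|p(y)-x|\geq |a|\alpha\}$ containing $q$ cannot exit the region $\{|y|\geq\kappa\}$ except through the outer $\{G_a^+=r\}$ or $\{|p(y)-x|=|a|\alpha\}$ boundaries. Indeed, if the leaf approached $\{|y|=\kappa\}$ from inside, the tangent there would already lie in the horizontal cone by the estimate above (applied with a slightly larger $\kappa$ chosen at the outset), so $|y|$ can vary only very slowly along the leaf and cannot actually drop below $\kappa$ without first escaping through one of the other two boundaries. Hence the component is horizontal-like everywhere on it.

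I expect no essential obstacle: the argument is a routine conversion of the slope estimate into cone language. The only bookkeeping point to get right is the quantifier order—fixing the cone constant $C$ first (as determined by Lemma \ref{horizontal} and the needs of later sections), then choosing $\kappa$ large in terms of $C$, and finally $\delta$ small in terms of both $\kappa$ and the $a$-closeness of the preceding lemma.
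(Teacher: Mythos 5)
Your proposal is correct and is essentially the same perturbation-of-slopes argument the paper relies on; the paper's own proof is the one-liner ``one takes $\kappa$ such that every leaf of ${\cal F}_0^-$ that intersects $|y|=\kappa$ is horizontal-like,'' which is exactly the observation that $dx/dy = p'(y) = 2y$ has modulus $\geq 2\kappa$ on those parabolas, combined with the $a$-closeness from the preceding lemma. Your quantifier ordering (fix $C$ from Lemma \ref{horizontal}, then $\kappa$ with $2\kappa\geq C+1$, then $\delta$) makes the implicit content of that one-liner explicit, and is fine. The final paragraph worrying about the component dipping below $\{|y|=\kappa\}$ is more caution than is needed here: the ``connected component'' in the statement is taken within the region $\{G_a^+\leq r\}\cap\{|p(y)-x|\geq|a|\alpha\}\cap\{|y|\geq\kappa\}$ itself (as confirmed by the way Lemma \ref{a-neighborhood} uses it), so the pointwise cone estimate already gives horizontal-likeness of that component once one has the $f_a$-invariant cone family of Lemma \ref{horizontal} on an ambient box $\{G_a^+\leq r'\}\cap\{|y|\leq\beta\}$ containing the region.
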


\begin{proof} One takes $\kappa$ such that every leaf of ${\cal
F}_0^-$ that intersects $|y|=\kappa$ is horizontal like.
\end{proof}

\begin{corollary}\label{lem1} There exists $\delta$ so that for all
$|a|<\delta$ and all $(x,y)\in \Omega$ the tangent plane to the
foliation ${\cal F}_a^-$ is not horizontal.
\end{corollary}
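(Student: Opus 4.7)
The plan is to reduce the statement to the non-vanishing of a single partial derivative and then invoke the preceding lemma. Concretely, because $\mathcal{F}_a^-$ is defined by the level sets of $\phi_{a,-}^2$ on $\Omega$, its tangent line at $(x,y)$ is
\[
T_{(x,y)}\mathcal{F}_a^- \;=\; \ker d\phi_{a,-}^2 \;=\; \{(\xi,\eta)\in T_{(x,y)}\mathbb{C}^2 \,:\, \partial_x\phi_{a,-}^2\cdot\xi + \partial_y\phi_{a,-}^2\cdot\eta = 0\}.
\]
This line contains the horizontal vector $(1,0)$ if and only if $\partial_x\phi_{a,-}^2(x,y)=0$. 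So the corollary reduces to showing that $\partial_x\phi_{a,-}^2$ has no zero on $\Omega$, at least for $a$ in some small disk.

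The main step is to quote the preceding lemma. It shows that on $\Omega$ the ratio
\[
R_a(x,y) \;:=\; \frac{\partial_y\phi_{a,-}^2}{\partial_x\phi_{a,-}^2}
\]
is $a$-close to its limit value $-p'(y)=-2y$; in fact the proof of that lemma exhibits $R_a$ as a holomorphic function on the blow-up $\{|u|<\epsilon,|v|<1/\alpha,|y|<\alpha\}$ which covers $\Omega$. Consequently $R_a$ is uniformly bounded on $\Omega$ by some constant (roughly $2\alpha+C|a|$) for all $a\in D_R$. The key point is that the finiteness of this ratio as a holomorphic function is incompatible with the vanishing of its denominator: if $\partial_x\phi_{a,-}^2(x_0,y_0)=0$ at some $(x_0,y_0)\in\Omega$, then either $\partial_y\phi_{a,-}^2(x_0,y_0)\neq 0$ (so $R_a$ blows up there, contradicting boundedness) or both partials vanish (so $(x_0,y_0)$ is a singular point of the foliation, which is excluded on $\Omega\subset U_a^-$ for $a\neq 0$, and at $a=0$ is ruled out by $\partial_x\phi_{0,-}^2=-1$).

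Therefore $\partial_x\phi_{a,-}^2\neq 0$ everywhere on $\Omega$ for every $a\in D_R$, so any $\delta\le R$ (inherited from the preceding lemma) works and the tangent plane of $\mathcal{F}_a^-$ at each point of $\Omega$ is transverse to $\partial/\partial x$. I do not expect a serious obstacle: the preceding lemma does all the analytic work via the blow-up, and the only subtle point to articulate cleanly is that "$a$-close" in that lemma actually delivers a genuine finite holomorphic ratio, which is precisely what forces the denominator $\partial_x\phi_{a,-}^2$ to be nonzero.
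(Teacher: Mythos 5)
Your proof is correct and supplies exactly the argument that the paper leaves implicit (the corollary is stated without a proof, immediately after the lemma asserting that $\partial_y\phi_{a,-}^2\big/\partial_x\phi_{a,-}^2$ is $a$-close to $-p'(y)$ on $\Omega$). The reduction to non-vanishing of $\partial_x\phi_{a,-}^2$, the use of the bounded holomorphic ratio on the blow-up, and the exclusion of a common zero of both partials (nondegeneracy of $\mathcal F_a^-$ for $a\neq 0$, and the explicit formula $\partial_x\phi_{0,-}^2=-1$ for $a=0$) are all the right ingredients; the only slight overreach is claiming the conclusion for all $a\in D_R$ rather than for a possibly smaller $\delta$, which is all the corollary requires and all the preceding lemma's $O(a)$-closeness genuinely delivers.
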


\section{Description of ${\cal F}_a^+$.}\label{F+}





\begin{definition} We say that a curve $C$ in a domain $B$ is horizontal-like iff there
exists a family of $f_a$-invariant horizontal cones in $B$, such
that the tangent lines to $C$ belong to this family.
\end{definition}

The function $G_0^+(x,y)=G_p(x)$ is self-similar:
$$G_p(p(x))=2G_p(x).$$
Recall that we chose $r$ so that
$$G_p(0)< r< G_p(p(0)).$$
\begin{figure}[h]
\centering
\includegraphics[height=3cm]{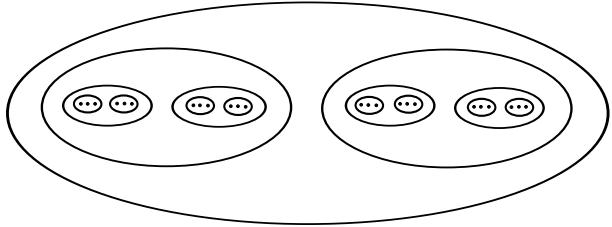}
\caption{Level sets $G_p=\frac{r}{2^n}$.}
\end{figure}

The picture of the level sets of $G_p$ inside
$\{G_p<r\}$ is self-similar. Inside each connected component of
$\{G_p=\frac{r}{2^n}\}$ there are exactly two connected components
$\{G_p=\frac{r}{2^{n+1}}\}$.

There is exactly one critical level in each connected component
$$\frac{r}{2^{n+1}}\leq G_p\leq \frac{r}{2^n}.$$

\begin{figure}[h]
\centering
\includegraphics[height=3cm]{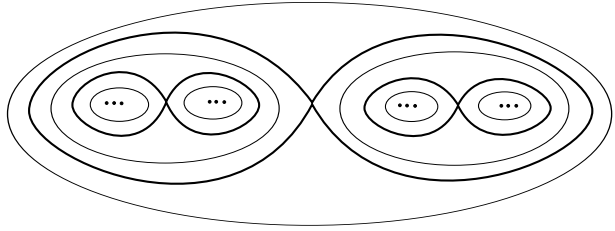}
\caption{Level sets of $G_p$.}
\end{figure}

In this section we show that the picture persists for the level
sets of $G_a^+$ for small enough $a$ on each horizontal-like curve
in a box
$$\{G_a^+\leq r\}\cap \{|y|\leq \beta\}.$$
We choose $\beta=2\max \{p(x) |\quad G_p(x)\leq r\}$ to
apply this construction to the leaves of ${\cal F}_a^-$ that
intersect $\{|y|=\alpha\}$ in $\{G_a^+\leq r\}$.


\begin{lemma} There exists $\delta$ such that for $|a|<\delta$
$\{G_a^+=r\},$ $\{G_a^+=\frac{r}{2}\}$ are non-critical on each
horizontal-like curve inside $|y|<\beta$. Moreover, $\{G_a^+=r\}$
has one connected component, $\{G_a^+=\frac{r}{2}\}$ has two
connected components.
\end{lemma}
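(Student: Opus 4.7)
Plan. The approach is to reduce the statement to a one-variable perturbation of $G_p$ by pulling back to the $x$-plane via the horizontal-like structure of $\gamma$. I first fix a horizontal-like curve $\gamma\subset\{G_a^+\leq r\}\cap\{|y|\leq\beta\}$. By Lemma~\ref{horizontal} (applied with $r'$ slightly larger than $r$), the horizontal cone $|\xi|>C|\eta|$ is $f_a$-invariant in the box for all sufficiently small $a$; since the tangent lines of $\gamma$ lie in this cone, the projection $\pi_x|_\gamma$ is a local biholomorphism with $|dy_\gamma/dx|<1/C$. A connected component of $\gamma$ can thus be written as a graph $\{(x,y_\gamma(x)):x\in\Omega_\gamma\}$, and for the components of $\gamma$ relevant to the conclusion one sees that $\Omega_\gamma$ contains a neighborhood of $\{G_p\leq r\}$ (recall that $p$ maps each component of $\{G_p\leq r/2\}$ biholomorphically onto $\{G_p\leq r\}$). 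I then set $g_a(x):=G_a^+(x,y_\gamma(x))$; at $a=0$ this is $g_0=G_p$. Continuity of $G_a^+$ in $(x,y,a)$, pluriharmonicity of $G_a^+$ on $U_a^+$, and Cauchy estimates give $G_a^+\to G_p$ in $C^1_{\mathrm{loc}}$ on $U_a^+$, which combined with the uniform bound $|y_\gamma'|<1/C$ yields $g_a\to G_p$ in $C^1$ on a neighborhood of $\{G_p=r\}\cup\{G_p=r/2\}$, with rate independent of $\gamma$.

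Next I exploit the choice $G_p(0)<r<G_p(p(0))=2G_p(0)$, which gives $r/2<G_p(0)<r$. Since $r$ lies strictly above the unique critical level of $G_p$, $\{G_p=r\}\subset\mathbb{C}$ is a single smooth Jordan curve; since $r/2$ lies strictly below the critical level and $p$ has disconnected Julia set, $\{G_p=r/2\}$ is a disjoint union of exactly two smooth Jordan curves. The only critical point of $G_p$ sits at level $G_p(0)\notin\{r,r/2\}$, so $|dG_p|$ admits a positive lower bound $m>0$ in a compact neighborhood of $\{G_p=r\}\cup\{G_p=r/2\}$. By the $C^1$-proximity established above, $|dg_a|>m/2$ on $\{g_a\in\{r,r/2\}\}$ for $|a|<\delta$ small enough (uniformly in $\gamma$), giving the non-criticality claim.

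To count connected components I invoke a standard persistence argument: $g_a$ is $C^0$-close to $G_p$ on a compact neighborhood of the relevant level sets and both functions are non-critical there, so flowing along $\nabla g_a$ (or along the straight-line homotopy $(1-t)G_p+tg_a$) yields a homeomorphism of a neighborhood sending $\{G_p=r\}$ to $\{g_a=r\}$ and $\{G_p=r/2\}$ to $\{g_a=r/2\}$. Hence $\{g_a=r\}$ has one component and $\{g_a=r/2\}$ has two. The main obstacle will be uniformity in the choice of horizontal-like curve $\gamma$: the graph function $y_\gamma$, the rate of $C^1$-closeness, and the lower bound on $|dg_a|$ must all be controlled independently of $\gamma$. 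This follows because the horizontal cone field (hence the bound on $|y_\gamma'|$) is intrinsic to the box, and $G_a^+$ is continuous on the compact set $\overline{\{G_a^+\leq r\}\cap\{|y|\leq\beta\}}$, with Cauchy estimates on a slightly larger domain providing uniform derivative control.
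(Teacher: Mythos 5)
Your proof is correct and follows the same essential strategy as the paper's (degenerate to $a=0$, where $G_0^+(x,y)=G_p(x)$, and persist by closeness of $G_a^+$ to $G_p$), but you supply the details that the paper's very terse proof leaves implicit. The paper simply observes that $G_a^+$ depends analytically on $a$ and that at $a=0$ the level sets on horizontal lines $y=b$ are non-degenerate with the right component count, then asserts persistence; you make this precise by using the uniform horizontal cone field from Lemma~\ref{horizontal} to write any horizontal-like curve as a graph $y=y_\gamma(x)$ with $|y_\gamma'|<1/C$ independent of $\gamma$, pulling back $G_a^+$ to a one-variable function $g_a$, noting $g_0=G_p$ and establishing $C^1$-closeness of $g_a$ to $G_p$ uniformly in $\gamma$ via the Cauchy/pluriharmonicity argument, and then appealing to a tubular-neighborhood/gradient-flow persistence argument for the level sets. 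The one small point worth being careful about in the component-counting step is that a straight-line homotopy or gradient flow does not by itself immediately produce a homeomorphism between level sets; the cleanest way to finish is to work in a tubular neighborhood of $\{G_p=r\}$ (resp.\ $\{G_p=r/2\}$) and apply the implicit function theorem to exhibit $\{g_a=r\}$ as a graph over $\{G_p=r\}$ in the normal direction. With that minor tightening, the argument is complete and rigorous, and arguably more careful than the published proof about uniformity in the choice of horizontal-like curve.
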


\begin{proof} $G_a^+$ is a function that depends analytically on
$a$ on ${\cal U}_{+}$ for all $a\in D$. Therefore, since the level
sets $\{G_0^+=r\}$ and $\{G_0^+=\frac{r}{2}\}$ on $y=b$ are
non-degenerate, they remain non-degenerate for small enough $a$.
The same is true for the number of components.
\end{proof}

\begin{lemma}
There exists $\delta$ so that for all $|a|<\delta$, there is
exactly one critical level of $G_a^+$ between $\frac{r}{2}$ and
$r$ on each horizontal-like curve in $|y|<\beta$. The
corresponding critical point is non-degenerate.
\end{lemma}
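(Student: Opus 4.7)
The plan is to reduce to the degenerate case $a=0$, where $G_0^+(x,y)=G_p(x)$ depends only on $x$, and then propagate by perturbation. Parametrize a horizontal-like curve $C$ in $\{G_a^+\le r\}\cap\{|y|\le\beta\}$ as a graph $y=y_C(x)$; the horizontal cone condition forces $|y_C'|$ to be uniformly bounded. For $n$ large enough so that $f_a^n$ maps this compact region into $V_+$, one has $G_a^+=2^{-n}\log|\phi_{a,+}^{2^n}|$ on the box, hence $G_a^+|_C=2^{-n}\mathrm{Re}\,H_{a,C}$, where $H_{a,C}(x):=\log\phi_{a,+}^{2^n}(x,y_C(x))$ is locally holomorphic in $x$. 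Critical points of the real function $G_a^+|_C$ correspond precisely to zeros of $H'_{a,C}$, and non-degeneracy amounts to $H''_{a,C}\ne 0$ at such a zero, since the real Hessian of $\mathrm{Re}\,H$ at a critical point has eigenvalues $\pm|H''|$.

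At $a=0$ the function $\phi_{0,+}^{2^n}(x,y)=b_p^{2^n}(x)$ is independent of $y$, hence independent of $C$, so $H_{0,C}(x)=2^n\log b_p(x)$. The zeros of $b_p'$ are exactly $\bigcup_{k\ge 0}p^{-k}(0)$, with critical values $G_p(0)/2^k$. The standing assumption $G_p(0)<r<G_p(c)=2G_p(0)$ gives $r/2<G_p(0)<r$, so only $k=0$, i.e.\ $x=0$, contributes a critical level in $(r/2,r)$. Differentiating the functional equation $b_p\circ p=b_p^2$ twice at $0$ yields $b_p''(0)=b_p'(c)/b_p(0)\ne 0$ (using that the critical orbit escapes, so $c$ is not in the grand orbit of $0$), so the critical point is non-degenerate.

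To pass to small $a\ne 0$, note that $\phi_{a,+}^{2^n}$ is holomorphic in $(x,y,a)$ on a fixed compact neighborhood of the box, and horizontal-like graphs have uniformly Lipschitz $y_C$. Thus $H_{a,C}$ and its $x$-derivatives converge to $H_{0,C}=2^n\log b_p$ uniformly over all admissible $C$ as $a\to 0$. A Rouch\'e argument on $H'_{a,C}$ over a small disk about $x=0$ (where $H'_{0,C}$ has a simple zero) yields a unique zero of $H'_{a,C}$ inside, and this zero remains non-degenerate because $H''_{a,C}$ stays close to $H''_{0,C}(0)\ne 0$. On the compact complement of that disk inside $\{r/2\le G_p\le r\}$ in the $x$-plane, $|H'_{0,C}|$ is bounded below by compactness (and is $C$-independent), so $H'_{a,C}$ has no additional zeros for small $a$.

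The main technical obstacle is ensuring these estimates hold \emph{uniformly} across the infinite-dimensional family of horizontal-like curves. This is handled by noting that the admissible graphs $\{y_C\}$, being uniformly Lipschitz on a bounded base, form a precompact family in $C^0$; consequently $H_{a,C}$ depends continuously on the pair $(a,C)$ in a precompact family of holomorphic functions of $x$, and the uniform convergence $H_{a,C}\to H_{0,C}$ in $C^2(x)$ is automatic, giving the required uniform Rouch\'e and lower-bound estimates.
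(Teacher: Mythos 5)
Your argument is correct and is, at bottom, the same persistence argument the paper uses, merely expressed through the holomorphic function $H_{a,C}=\log\phi_{a,+}^{2^n}$ rather than through the gradient of $G_a^+$. The paper computes the index of $\operatorname{grad} G_a^+$ along $\partial\{r/2\le G_a^+\le r\}$ on the curve and uses continuity in $a$; since $G_a^+|_C=\mathrm{Re}\,H_{a,C}$ and (by Cauchy--Riemann) the degree of $\operatorname{grad}\mathrm{Re}\,H$ around a loop equals minus the number of zeros of $H'$ inside, your Rouch\'e argument on $H'_{a,C}$ is precisely the argument-principle restatement of that same topological count. Two small points in your favor that the paper leaves implicit: you actually verify the $a=0$ non-degeneracy by differentiating $b_p\circ p=b_p^2$ twice to get $b_p''(0)=b_p'(c)/b_p(0)\neq 0$ (the paper just asserts non-degeneracy of the zeros of $(b_p^{2^n})'$ in an earlier section), and you spell out why the estimate is uniform across the family of horizontal-like graphs. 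Two small points of looseness: ``$f_a^n$ maps this compact region into $V_+$'' should refer to the compact set $\{r/2\le G_a^+\le r\}\cap\{|y|\le\beta\}$ (the full set $\{G_a^+\le r\}$ meets $J_a^+$ and never enters $V_+$), and the precompactness digression is unnecessary---joint holomorphy of $\phi_{a,+}^{2^n}$ plus the uniform bound on $|y_C'|$ already gives the needed uniform convergence of $H'_{a,C}$, and Rouch\'e then delivers a simple zero (hence non-degeneracy) without needing $H''_{a,C}$ to converge.
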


\begin{proof} Let $C$ be a horizontal-like curve. Then the domain $\{G_a^+\leq r\}$
inside $C$ is parametrized by a planar domain. Therefore, the
index of $\mbox{grad} G_0^+$ along the boundary of
$\{\frac{r}{2}\leq G_0^+\leq r\}$ is well-defined and is equal to
one. The function $G_{a}^+$ depends holomorphically on $a$. Thus,
the index of $\mbox{grad} G_a^+$ along the boundary of
$\{\frac{r}{2}\leq G_a^+\leq r\}$ is one as well for small $a$.
Therefore, there is only one critical point inside and it is
non-degenerate.
\end{proof}

\begin{lemma} There exists $\delta$ so that for all $|a|<\delta$ $T_r=\{G_a^+=r\}\cap W$ is a solid torus,
$\{G_a^+=\frac{r}{2}\}\cap W$ consists of two solid tori
$T^1_{r/2},T^2_{r/2}$ (the core coordinate can be chosen to be
real-analytic, the disk coordinate holomorphic).
\end{lemma}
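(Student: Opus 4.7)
The plan is to realize $T_r$ as a trivial circle bundle of holomorphic disks over the disk $\{|y|\le\alpha\}$, hence as a solid torus, using the Böttcher function $\phi_{a,+}$ to set up the coordinates. The projection $\pi\colon T_r\to\{|y|\le\alpha\}$, $(x,y)\mapsto y$, has for fibre over $y_0$ the horizontal slice $T_r\cap\{y=y_0\}$, which by the preceding lemma is a single non-critical real-analytic Jordan curve (the slice is a horizontal-like curve and $r>G_p(0)$ lies above the only critical level of $G_p$ in the region). Real-analytic dependence of $G_a^+$ on its arguments upgrades $\pi$ to a real-analytic circle bundle over a contractible base, hence a solid torus.

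To produce the stated coordinates I would replace the angular coordinate of $\pi$ by the argument of $\phi_{a,+}$. At $a=0$, $\phi_{0,+}(x,y)=b_p(x)$ is single-valued and non-vanishing in a neighborhood of $\{G_p\ge r\}$, since this set lies inside the Böttcher domain of univalence of $p$. For small $a\ne 0$, the function $\phi_{a,+}^{2^n}=\phi_{a,+}\circ f_a^n$ is single-valued on ${\cal D}_{n,+}$, so I would pick the unique branch of its $2^n$-th root that continues $b_p$ as $a\to 0$; this yields a single-valued holomorphic $\phi_{a,+}$ on an open neighborhood of $T_r$. Then $T_r=\{|\phi_{a,+}|=e^r\}$, the angle $\theta=\arg\phi_{a,+}$ serves as the real-analytic $S^1$ core coordinate, and each fibre $L_\theta=\{\phi_{a,+}=e^{r+i\theta}\}$ is a leaf of ${\cal F}_a^+$. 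By Lemma~\ref{vertical}, each $L_\theta$ is vertical-like, so $L_\theta\cap W$ is a holomorphic graph $x=x(y)$ over $\{|y|\le\alpha\}$ and $y$ serves as the holomorphic disk coordinate. The same recipe treats $T_{r/2}^1$ and $T_{r/2}^2$ once one replaces $\phi_{a,+}$ by $\phi_{a,+}^2=\phi_{a,+}\circ f_a$, which is single-valued on ${\cal D}_{1,+}$ even though $\phi_{a,+}$ itself need not be, because $r/2<G_p(0)$ falls below the domain of univalence; the preceding lemma supplies the two connected components, and $f_a$ carries each one biholomorphically onto $T_r$, so the coordinates pull back.

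The main obstacle I anticipate is verifying that the chosen branch of $\phi_{a,+}$ (respectively of the square root of $\phi_{a,+}\circ f_a$ on each $T_{r/2}^i$) is single-valued on a full open neighborhood, not merely along the level set itself, for all small $a\ne 0$. At $a=0$ the relevant domains avoid every critical level of $G_p$ except $G_p(0)$, so univalence is clear; for $a\ne 0$ it should follow from the non-degeneracy of the levels $\{G_a^+=r\}$ and $\{G_a^+=r/2\}$ on horizontal-like curves (the preceding two lemmas) together with the holomorphic dependence of $\phi_{a,+}^{2^n}$ on $a$, which prevents new critical values from appearing in the relevant range of Green levels as $a$ varies in a small disk.
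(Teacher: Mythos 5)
Your overall approach coincides with the paper's: realize the level set as a product via the coordinates $(\phi_{a,+},y)$, with $\arg\phi_{a,+}$ the real-analytic core coordinate and $y$ the holomorphic disk coordinate; the paper's proof is precisely the one-line assertion $\{G_a^+=r\}=\{(\phi_{a,+},y):\ |\phi_{a,+}|=e^r,\ |y|\le\alpha\}$. Your first paragraph (the real-analytic circle fibration over $\{|y|\le\alpha\}$ furnished by the preceding lemmas) and the B\"ottcher refinement spell out the branch considerations the paper leaves implicit.

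Two details in your treatment of $T_{r/2}^i$ are off, though neither is fatal. First, $T_{r/2}^i$ does \emph{not} lie in ${\cal D}_{1,+}$: if $(x,y)\in T_{r/2}^i$ then $G_a^+(f_a(x,y))=r$, and by the choice of $\alpha$ one has $G_a^+>r$ on $V_+$, so $f_a(T_{r/2}^i)\subset W$ rather than $V_+$ and the expression $\phi_{a,+}\circ f_a$ is not defined there. What one actually uses is the single-valued $2^{n-1}$-th root of $\phi^{2^n}_{a,+}$ (for $n$ large enough that $f_a^n$ lands in $V_+$), the branch continuing $b_p^2$ at $a=0$; it is single-valued because $p$ maps each component of $\{G_p=r/2\}$ with degree one onto $\{G_p=r\}$, so $\phi^{2^n}_{0,+}=b_p^{2^n}\circ\pi_x$ has winding number $2^{n-1}$ around the core, and the monodromy is preserved for small $a$. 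Second, $f_a$ does \emph{not} carry $T_{r/2}^i$ biholomorphically onto $T_r$: for $a\neq 0$ the image $f_a(T_{r/2}^i)$ is the thin open piece of $\{G_a^+=r\}\cap W$ where $|p(y)-x|\le|a|\alpha$ (and degenerates to a $2$-real-dimensional set as $a\to 0$), so the coordinates cannot literally be pulled back along $f_a$; they must be obtained directly from the branch of $\phi_{a,+}^2$ as above. With these corrections the argument is exactly the paper's.
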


\begin{proof} $\{G_a^+=r\}=\{(\phi_{a,+},y), |\phi_{a,+}|=r, |y|\leq \alpha\}$.

For $\{ G_a^+=\frac{r}{2}\}\cap W$ the proof goes the same way.
\end{proof}

Take any horizontal-like curve. We want to prove by induction that
inside each component $\{G_a^+=\frac{r}{2^n}\}$ there are exactly
two components $\{G_a^+=\frac{r}{2^{n+1}}\}$, and they are
non-critical. Therefore, there is exactly one critical level in
between, and the corresponding critical point is non-degenerate.

\begin{lemma} There exists $\delta$ so that for all $|a|<\delta$,
the level set $\{G_a^+= \frac{r}{2^n}\}$ on each horizontal-like
curve in $|y|<\beta$ is non-critical.
\end{lemma}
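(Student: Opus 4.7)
The plan is to prove the lemma by induction on $n$, with base cases $n=0,1$ supplied by the two preceding lemmas (which establish that the level sets $\{G_a^+=r\}$ and $\{G_a^+=r/2\}$ are non-critical on every horizontal-like curve inside $\{|y|<\beta\}$).

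For the inductive step, assume the conclusion holds at level $r/2^{n-1}$. Let $C$ be a horizontal-like curve in $\{G_a^+\leq r\}\cap\{|y|\leq\beta\}$ and let $p\in C$ with $G_a^+(p)=r/2^n$. Put $\tilde p=f_a(p)$ and $\tilde C=f_a(C)$. By the functional equation $G_a^+\circ f_a=2G_a^+$, we have $G_a^+(\tilde p)=r/2^{n-1}\leq r/2$. I first verify that a neighborhood of $\tilde p$ in $\tilde C$ lies in the box and is horizontal-like there. The $y$-coordinate of $\tilde p$ equals the $x$-coordinate of $p$; for $|a|$ small, the projection of $\{G_a^+\leq r\}$ onto the $x$-axis is close to $\{G_p\leq r\}$, so this coordinate is bounded by a constant close to $\max\{|x|:G_p(x)\leq r\}$, which is strictly less than $\beta=2\max\{|p(x)|:G_p(x)\leq r\}$. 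Combined with $G_a^+(\tilde p)\leq r/2<r$, this places $\tilde p$ strictly inside the box, so a small piece of $\tilde C$ around $\tilde p$ stays in the box. Moreover, by Lemma \ref{horizontal} the horizontal cone family is $f_a$-invariant on the box, so the tangent directions of $\tilde C$ at $\tilde p$ lie in those cones, and $\tilde C$ is horizontal-like there.

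By the inductive hypothesis applied to $\tilde C$ at the level $r/2^{n-1}$, the point $\tilde p$ is a regular point of $G_a^+|_{\tilde C}$. Since $f_a$ is a biholomorphism and $G_a^+|_C=\tfrac12(G_a^+|_{\tilde C})\circ(f_a|_C)$, the chain rule yields $d(G_a^+|_C)(p)\neq 0$, so $p$ is a regular point of $G_a^+|_C$ as well. The required $\delta$ is the minimum of those provided by the two base cases and by Lemma \ref{horizontal}; because each inductive step re-uses only these two ingredients, no further shrinking is needed as $n$ grows, and the same $\delta$ works at every level.

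The main obstacle is the geometric verification that a neighborhood of $\tilde p$ in $f_a(C)$ remains inside the box $\{G_a^+\leq r\}\cap\{|y|\leq\beta\}$. This is precisely what dictates the specific choice of $\beta=2\max\{|p(x)|:G_p(x)\leq r\}$, and it requires $|a|$ to be small enough that the $x$-extent of the box is controlled by its polynomial limit $\{G_p\leq r\}$. Once this is in place, the argument is purely inductive and uses only the functional equation together with the invariance of the horizontal cone field.
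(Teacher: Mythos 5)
Your proof is correct and follows the same essential idea as the paper: use the functional equation $G_a^+\circ f_a=2G_a^+$ together with the $f_a$-invariance of the horizontal cone field (Lemma \ref{horizontal}) to push the level set forward to a level where non-criticality is already known. The difference is one of bookkeeping. The paper pushes forward all at once by $f_a^n$, lands the image in the solid torus $T_r$, and reads off non-criticality from the fact that the image is a horizontal-like graph over the $x$-axis; you instead apply $f_a$ a single time and invoke an inductive hypothesis at level $r/2^{n-1}$, with the preceding lemma (levels $r$ and $r/2$) as base cases. Your version is arguably more careful in two respects that the paper passes over in silence: you explicitly verify that the image point stays inside the box $\{G_a^+\leq r\}\cap\{|y|\leq\beta\}$, using $\max\{|p(x)|:G_p(x)\leq r\}\geq\max\{|x|:G_p(x)\leq r\}$ to justify the choice of $\beta$, and you explicitly argue that $\delta$ is uniform in $n$ because each inductive step reuses only the base cases and the cone-invariance constant. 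Both routes are valid; the paper's one-shot argument is shorter but leans more heavily on the reader to supply these checks.
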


\begin{proof}

$f_a^n(\{G_a^+\leq \frac{r}{2^n}\})$ is horizontal-like, since it
is an image of a horizontal-like curve and it belongs to the box
with $f_a$-invariant horizontal cones.

$f_a^n(G_a^+=\frac{r}{2^n})\in T_r$ and it projects one-to-one to
$x$-axis. Therefore, it is non-critical.

\end{proof}

\begin{lemma} For $|a|<\delta$ on
each horizontal-like curve for every $n$ there are exactly two
level sets $\{G_a^+= \frac{r}{2^{n+1}}\}$ inside $\{G_a^+=
\frac{r}{2^{n}}\}$ and they are non-critical.
\end{lemma}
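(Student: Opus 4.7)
The plan is to argue by induction on $n\ge 0$, using the functional equation $G_a^+\circ f_a=2G_a^+$ together with the horizontal cone invariance of Lemma \ref{horizontal} as the inductive engine. The base case $n=0$ is already supplied by the three preceding lemmas: on any horizontal-like curve inside the box $\{G_a^+\le r\}\cap\{|y|\le\beta\}$, the level $\{G_a^+=r\}$ is a single non-critical loop and $\{G_a^+=r/2\}$ consists of exactly two non-critical loops.

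For the inductive step, I would assume the statement for $n-1$ and deduce it for $n$. Fix a horizontal-like curve $C$ and let $D\subset C$ be the topological disk bounded by a connected component of $\{G_a^+=r/2^n\}$; this component is a smooth loop by the previous lemma, so $D$ is indeed a disk. Push $D$ forward by $f_a$. By the functional equation, $f_a(D)\subset\{G_a^+\le r/2^{n-1}\}$ and $\partial f_a(D)$ is a component of $\{G_a^+=r/2^{n-1}\}$. By Lemma \ref{horizontal}, $df_a$ carries horizontal cones to horizontal cones in the invariant box, hence $f_a(C)$ is again a horizontal-like curve (in the portion that reaches $\{G_a^+\le r/2^{n-1}\}$). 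Applying the inductive hypothesis to the horizontal-like curve $f_a(C)$ and to the disk $f_a(D)$, we obtain exactly two non-critical components of $\{G_a^+=r/2^n\}$ inside $f_a(D)$. Since $f_a$ is a biholomorphism, pulling back by $f_a^{-1}$ yields exactly two components of $\{G_a^+=r/2^{n+1}\}$ inside $D$, and their non-criticality on $C$ is precisely the content of the preceding lemma.

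The main obstacle I expect is the bookkeeping required to keep the image $f_a(D)$ inside the horizontal-invariant box $\{G_a^+\le r\}\cap\{|y|\le\beta\}$, so that Lemma \ref{horizontal} genuinely applies to $f_a(C)$ and so that the inductive hypothesis can be invoked on $f_a(C)$. The $G_a^+$-condition is automatic from $G_a^+\circ f_a=2G_a^+$, but the $y$-bound is not: the new $y$-coordinate on $f_a(D)$ is the old $x$-coordinate on $D$, so one must verify that on $\{G_a^+\le r/2^n\}\cap\{|y|\le\beta\}$ one has $|x|\le\beta$ for $|a|$ small. This is exactly what the choice $\beta=2\max\{p(x):G_p(x)\le r\}$ is designed to give: when $a=0$, $G_0^+\le r$ forces $|x|\le\beta/2$, and by continuity of $G_a^+$ in $a$ (proved in Section \ref{sec:Green_function}), this persists as $|x|\le\beta$ for $|a|<\delta$ with $\delta$ chosen small enough. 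This smallness condition on $\delta$ must be baked into the statement and used at every iteration, but since only one step of $f_a$ is taken in the induction, a single choice of $\delta$ suffices.
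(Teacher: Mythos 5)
Your induction is correct, but it takes a genuinely different route from the paper's. The paper pushes the disk $D$ bounded by a component of $\{G_a^+=r/2^n\}$ forward all at once by $f_a^n$: the image $f_a^n(D)$ is a disk that projects one-to-one to the $x$-axis with boundary on $T_r$, so it meets each of the two solid tori $T^1_{r/2}$, $T^2_{r/2}$ in exactly one circle, and those two circles pull back under $f_a^{-n}$ to the two desired components of $\{G_a^+=r/2^{n+1}\}$ inside $D$. You instead induct one application of $f_a$ at a time, using the solid-torus count only to supply the base case $n=0$ and the functional equation $G_a^+\circ f_a=2G_a^+$ to propagate it. Both routes rest on the same topological input and both require that the relevant iterates of the disk stay inside the invariant box $\{G_a^+\le r\}\cap\{|y|\le\beta\}$ so that Lemma \ref{horizontal} applies to the image curve; the paper leaves this box-invariance check implicit, whereas you flag it explicitly and reduce it to a single application of $f_a$, which is cleaner bookkeeping. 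One small thing to record in a final write-up: to apply the inductive hypothesis to $f_a(C)$ you should state that the image curve, intersected with the box, is again horizontal-like because its tangent lines sit in the same $f_a$-invariant cone family, and that $f_a(D)$ is the disk in it bounded by the circle $f_a(\partial D)$ rather than its complement; both are immediate from the definitions but needed to make the inductive hypothesis literally applicable.
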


\begin{proof} For every $n$, $f_a^n(\{G_a^+\leq \frac{r}{2^n}\})$
is a disk that projects one-to-one to $x$-axis with the boundary
on $T_r$. It intersects $T^1_{r/2}$, $T^2_{r/2}$ by a circle each.
On the intersection $\{G_a^+=\frac{r}{2}\}$. This proves the
lemma.
\end{proof}

\begin{corollary} For $|a|<\delta$ on each horizontal-like curve there is only one
critical level $\{G_a^+=r'\},$ where
$\frac{r}{2^{n+1}}<r'<\frac{r}{2^n}$ for each connected component
$\{G_a^+=\frac{r}{2^n}\}$.
\end{corollary}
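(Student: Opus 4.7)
The plan is to bootstrap from the already-established base case --- exactly one non-degenerate critical level of $G_a^+$ between $r/2$ and $r$ on any horizontal-like curve --- via the functional equation $G_a^+ \circ f_a = 2 G_a^+$ and the $f_a$-invariance of the horizontal cone family.

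Fix $n \geq 0$ and a horizontal-like curve $C$ in the box $\{G_a^+ \leq r\} \cap \{|y| \leq \beta\}$. By the preceding lemma, each connected component $D_n \subset C$ of $\{G_a^+ \leq r/2^n\}$ is a topological disk containing exactly two components of $\{G_a^+ \leq r/2^{n+1}\}$, with both boundary levels non-critical. Applying the biholomorphism $f_a^n$ and invoking the functional equation, $D_n$ is mapped onto a component $D_0$ of $\{G_a^+ \leq r\}$ on the image curve $C' = f_a^n(C)$, while the two sub-disks at level $r/2^{n+1}$ go to the two components of $\{G_a^+ \leq r/2\}$ inside $D_0$. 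Under this biholomorphism, critical points of $G_a^+|_{D_n}$ correspond bijectively to those of $G_a^+|_{D_0}$, with critical values scaled by $2^n$ and degeneracy preserved.

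By Lemma \ref{horizontal} (invariance of horizontal cones), combined with the observation already used in the previous lemma that $f_a^n(D_n)$ remains inside the box $\{G_a^+ \leq r\} \cap \{|y| \leq \beta\}$, the image $C'$ is again horizontal-like. The earlier lemma of this section --- exactly one non-degenerate critical level of $G_a^+$ in $(r/2, r)$ on any horizontal-like curve --- then supplies a unique critical value $r'' \in (r/2, r)$ for $G_a^+|_{D_0}$ with non-degenerate critical point. Pulling back yields the unique critical value $r' = r''/2^n \in (r/2^{n+1}, r/2^n)$ of $G_a^+|_{D_n}$, also non-degenerate, which is exactly the conclusion sought.

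The main obstacle is verifying that $C'$ genuinely lands inside the box where the horizontal cone field is invariant. This containment was already exploited in the preceding lemma's proof (where $f_a^n(\{G_a^+ \leq r/2^n\})$ was shown to map into $T_r$ and project one-to-one to the $x$-axis), so no new argument is required here --- the corollary is essentially a packaging of the preceding three lemmas via the functional equation.
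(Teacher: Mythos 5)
Your proposal is correct and matches the paper's approach: the preceding lemmas of this section are themselves proved by pushing forward under $f_a^n$ into the reference tori $T_r$, $T^1_{r/2}$, $T^2_{r/2}$, and the corollary is obtained by exactly the pull-back from the $n=0$ base case (one non-degenerate critical level in $(r/2,r)$ on a horizontal-like curve) that you describe. The paper leaves the corollary as an immediate consequence of those lemmas without spelling out the conjugation step, but your packaging is precisely what is intended, and your remark on why $f_a^n(D_n)$ stays in the box with the invariant horizontal cone field is the right point to flag.
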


The next lemma states that the foliation ${\cal F}_a^+$ is not
only vertical-like (projects one-to-one to $y$-axis), but is
uniformly close to vertical on some thickening of
$\{G_a^+=\frac{r}{2^n}\}$.




\begin{lemma}\label{F+vertical} Fix a small $\lambda$. There exists $\delta$ s.t. $\forall |a|<\delta$
$$
\left|\frac{\partial \phi^{2^n}_{a,+}/\partial y}{\partial
\phi^{2^n}_{a,+}/\partial x}\right|<C|a|
$$
\noindent on $\{\frac{r-\lambda}{2^n}\leq G_a^+\leq
\frac{r+\lambda}{2^n}\}$ with $C$ independent on $n$.
\end{lemma}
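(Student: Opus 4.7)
The plan is to derive a recurrence for the ratio $t_n(x,y) := \partial_y\phi^{2^n}_{a,+}/\partial_x\phi^{2^n}_{a,+}$ from the functional equation $\phi^{2^n}_{a,+} = \phi^{2^{n-1}}_{a,+}\circ f_a$, and then to prove the uniform bound $|t_n|\leq C|a|$ by induction on $n$. Since $f_a(x,y) = (x^2+c-ay,\,x)$, the chain rule applied to $\phi^{2^n}_{a,+}(x,y) = \phi^{2^{n-1}}_{a,+}(x^2+c-ay,\,x)$ gives
$$
\partial_y\phi^{2^n}_{a,+} \;=\; -a\,\partial_1\phi^{2^{n-1}}_{a,+}\!\circ f_a, \qquad \partial_x\phi^{2^n}_{a,+} \;=\; 2x\,\partial_1\phi^{2^{n-1}}_{a,+}\!\circ f_a \;+\; \partial_2\phi^{2^{n-1}}_{a,+}\!\circ f_a,
$$
so dividing yields the clean recurrence
$$
t_n(x,y) \;=\; \frac{-a}{2x \;+\; t_{n-1}(f_a(x,y))}.
$$
Because $G_a^+\circ f_a = 2G_a^+$, one has $f_a(A_n)\subset A_{n-1}$, where $A_k := \{(r-\lambda)/2^k \leq G_a^+ \leq (r+\lambda)/2^k\}$, so the inductive hypothesis on $A_{n-1}$ plugs directly into the recurrence.

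The base case $n=0$ is the estimate $|t_0|\leq C_0|a|$ on the set $A_0 = \{r-\lambda\leq G_a^+\leq r+\lambda\}$. In (the relevant portion of) $A_0$ the function $\phi_{a,+}$ is holomorphic in $(x,y,a)$ and at $a=0$ reduces to $b_p(x)$, so $\partial_y\phi_{0,+}\equiv 0$ while $\partial_x\phi_{0,+} = b_p'(x)$ is bounded below: the standing assumption $r>G_p(0)$ together with $\lambda$ small keeps the level $\{G_p\in[r-\lambda,r+\lambda]\}$ strictly outside the critical point $x=0$ of $b_p$. Holomorphic dependence on $a$ then yields $|t_0|\leq C_0|a|$. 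For the inductive step, assuming $|t_{n-1}|\leq C|a|$ on $A_{n-1}$ and inserting into the recurrence gives
$$
|t_n(x,y)| \;\leq\; \frac{|a|}{2|x|\,-\,C|a|}
$$
for $(x,y)\in A_n$, so everything reduces to a uniform lower bound $|x|\geq d>0$ on $A_n$, with $d$ independent of $n$.

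The main obstacle is securing this uniform $d$. At $a=0$ the set $A_n$ is a $G_p$-level constraint on $x$, and the only critical point $x=0$ of $b_p$ sits at height $G_p(0)$; one must therefore keep each interval $[(r-\lambda)/2^n,\,(r+\lambda)/2^n]$ a definite distance away from $G_p(0)$. The choices $G_p(0)<r<G_p(c)=2G_p(0)$ with $\lambda$ small give both $r-\lambda>G_p(0)$ (handling $n=0$) and $(r+\lambda)/2<G_p(0)$ (handling all $n\geq 1$, since $(r+\lambda)/2^n$ is then smaller still), producing a uniform positive gap in $G_p$-values that converts, via compactness and the geometry of the level sets of $G_p$, into a uniform Euclidean gap $|x|\geq d>0$. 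Continuity of $G_a^+$ in $a$ propagates this to $|a|<\delta$ for some $\delta>0$. Finally, taking $C:=\max(C_0,\,1/d)$ and $\delta$ small enough that $2d - C|a|\geq 1/C$ closes the induction and delivers $|t_n|\leq C|a|$ on $A_n$ uniformly in $n$, as required.
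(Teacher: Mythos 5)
Your proof is correct and follows essentially the same route as the paper: the recurrence $t_n = -a/(2x + t_{n-1}\circ f_a)$ you derive is precisely the content of the paper's $f_a^{-1}$-invariant vertical cone lemma (Lemma~\ref{vertical}), and both arguments induct from the base annulus $\{r-\lambda\le G_a^+\le r+\lambda\}$ using holomorphicity of $\phi_{a,+}$ in $a$. Your write-up has the merit of making explicit the uniform lower bound $|x|\ge d$ on the sets $A_n$ (i.e.\ the choice $G_p(0)<r<2G_p(0)$ that keeps the critical level away from the box), which the paper's terse proof delegates to the earlier cone lemma and its note about the tip of the parabola.
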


\begin{proof} On $\{r-\lambda\leq G_a^+\leq r+\lambda\}$ the
inequality follows from the fact that $\phi_{a,+}$ is a
holomorphic function in $a$.

The leaves of foliation ${\cal F}_a^+$ in
$\{\frac{r-\lambda}{2^{n+1}}\leq G_a^+\leq
\frac{r+\lambda}{2^{n+1}}\}\cap W$ are preimages under $f_a^{-1}$
of the leaves of ${\cal F}_a^+$ in $\{\frac{r-\lambda}{2^{n}}\leq
G_a^+\leq \frac{r+\lambda}{2^{n}}\}\cap W$. Therefore, by
induction we check that they belong to $f_a^{-1}$-invariant
vertical cones.
\end{proof}

\section{Critical Locus in $\Omega$.}\label{Omega}

Recall that

$$\Omega_a = \{G_a^+\leq r\}\cap
\{|y|\leq \alpha\}\cap \{|p(y)-x|\geq |a|\alpha\}.$$

\noindent See Figure \ref{fig:Omega}. We will omit the subscript
$a$, when it is clear from the context which H\'{e}non mapping is
under consideration.

In this section we subdivide $\Omega_a$ into countably many
regions. Note that the level sets $\{G_a^+=\frac{r}{2^n}\}$ on
each horizontal line $y=k$, $|k|<\alpha$ depend continuously with
respect to parameter $a$. Therefore, the partition of

\begin{equation}\label{for:partition_Omega}\{\frac{r}{2^{n+1}}\leq G_a^+\leq \frac{r}{2^n}\}\cap
\Omega_a
\end{equation}

\noindent into connected components depends continuously on $a$.

Therefore, it is enough to enumerate the connected components of
$\Omega_0$. We call $\Omega_{0}^{\xi_n}$ the connected component
of

$$\{\frac{r}{2^{n+1}}\leq G_0^+\leq \frac{r}{2^n}\}\cap \Omega_0,$$

\noindent which contains the critical line $x=\xi_n$. We call
$\Omega_a^{\xi_n}$ the connected component of
(\ref{for:partition_Omega}) that is the continuation of
$\Omega_0^{\xi_n}$.

\begin{figure}[h]
\centering
\psfrag{|y|=alpha}{$|y|=\alpha$}
\psfrag{G=r}{$G_a^+=\frac{r}{2^n}$}\psfrag{G=r/2}{$G_a^+=\frac{r}{2^{n+1}}$}
\psfrag{u=aalpha}{$|u|=a\alpha$}\psfrag{x}{$x$}\psfrag{y}{$y$}
\includegraphics[height=8cm]{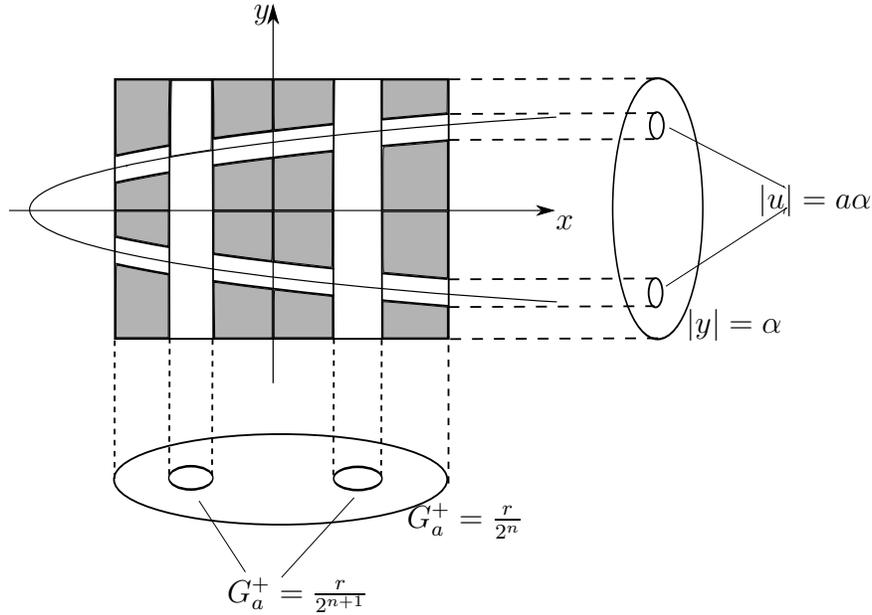}
\caption{Domain $\Omega_a^{\xi_n}$}
\end{figure}

\begin{lemma}\label{x-boundary} Fix a small $\lambda$. There exists $\delta$
(independent on $n$) so that for all $|a|<\delta$ the critical
locus in each connected component of
$$\{\frac{r-\lambda}{2^{n}}\leq G_a^+\leq
\frac{r+\lambda}{2^n}\}\cap \{|y|\leq \alpha\}$$ \noindent is an
annulus which is a graph of function $y(\phi_{a,+})$.
\end{lemma}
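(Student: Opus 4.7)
The plan is to encode the tangency between ${\cal F}_a^+$ and ${\cal F}_a^-$ as a single scalar equation $H(x,y,a)=0$, solve for $y$ via the implicit function theorem, and verify that the resulting graph is swept out as $\phi_{a,+}$ traces an annulus.

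Fix a connected component $C_n$ of the region in question. By Section \ref{F+}, provided $\lambda$ is small enough that $r\pm\lambda$ remains strictly between $G_p(0)$ and $G_p(c)=2G_p(0)$, $C_n$ contains no critical points of $G_a^+$, and hence none of $\phi_{a,+}^{2^n}$. Lemma \ref{F+vertical} gives $|\partial_y\phi_{a,+}^{2^n}/\partial_x\phi_{a,+}^{2^n}|<C|a|$ with $C$ independent of $n$, so $\partial_x\phi_{a,+}^{2^n}\neq 0$ and $(\phi_{a,+}^{2^n},y)$ form holomorphic coordinates on $C_n$. Since $|\phi_{a,+}^{2^n}|=\exp(2^n G_a^+)$, the function $\phi_{a,+}^{2^n}$ maps $C_n$ surjectively onto the annulus ${\cal A}=\{e^{r-\lambda}\leq |w|\leq e^{r+\lambda}\}$: this is immediate at $a=0$ where $\phi_{0,+}^{2^n}=b_p^{2^n}$ is a biholomorphism from the $x$-annulus onto ${\cal A}$, and persists for small $a$ by continuity.

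On the slice $\{a=\mathrm{const}\}$ the critical locus is the zero set of $d\phi_{a,+}^{2^n}\wedge d\phi_{a,-}^2$. Since $\partial_x\phi_{a,-}^2=-1+O(|a|)$ by Lemma \ref{limiting_value}, dividing the resulting Jacobian by $\partial_x\phi_{a,+}^{2^n}\cdot\partial_x\phi_{a,-}^2$ gives the equivalent scalar equation
$$H(x,y,a):=\frac{\partial_y\phi_{a,-}^2}{\partial_x\phi_{a,-}^2}-\frac{\partial_y\phi_{a,+}^{2^n}}{\partial_x\phi_{a,+}^{2^n}}=0.$$
The first ratio equals $-p'(y)+O(|a|)=-2y+O(|a|)$ uniformly on $\Omega_a$ by the lemma of Section \ref{F-}, and the second is $O(|a|)$ uniformly in $n$ by Lemma \ref{F+vertical}. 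Therefore $H=-2y+O(|a|)$ with $\partial_y H=-2+O(|a|)$, both bounds uniform in $n$ and on $C_n$. For $|a|<\delta$ sufficiently small $|\partial_y H|$ is bounded away from zero, and the implicit function theorem produces a unique holomorphic $y=y(w,a)$, $w\in{\cal A}$, with $y=O(|a|)$, whose graph in the coordinates $(w,y)$ is exactly the critical locus in $C_n$. Selecting a branch of the $2^n$-th root on $C_n$ rewrites this as a graph $y=y(\phi_{a,+})$ over an annulus in the $\phi_{a,+}$-plane.

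The main technical point is uniformity of $\delta$ in $n$: this rests entirely on the $n$-independent constant $C$ in Lemma \ref{F+vertical} (so $\partial_y H$ is uniformly nondegenerate) together with the fact that the estimates of Section \ref{F-} for $\phi_{a,-}^2$ do not involve $n$ at all.
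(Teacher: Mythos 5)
Your argument takes a genuinely different route to uniformity in $n$ than the paper does. The paper first applies the implicit function theorem at each fixed $n$, producing a threshold $\delta_n$ that a priori shrinks with $n$, then supplies the uniformity by a topological step: using the ``no critical locus in $\{\gamma<|y|<\alpha\}$'' estimate, it shows the index of $\mathrm{grad}\,G_a^-$ restricted to each leaf of $\mathcal{F}_a^+$ along $|y|=\gamma$ is constant in $a$ and equal to $1$ at $a=0$, so exactly one transverse zero per leaf persists for all $|a|<\delta''$. You instead set up a single quantitative IFT that is uniform from the start, with the $n$-independence carried entirely by the constant $C$ in Lemma~\ref{F+vertical} and the $n$-independence of the $\phi_{a,-}^2$ estimates. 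The two arguments rest on the same inputs and prove the same thing; the paper's index argument is more robust (it needs only that $H$ has no zeros on $|y|=\gamma$, not a derivative bound), while yours is more explicit.

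Two points you should not leave implicit, both of which the paper's formulation is structured to avoid. First, the bound $\partial_y H=-2+O(|a|)$ is not an immediate consequence of $H=-2y+O(|a|)$: the $O(|a|)$ error depends on $y$, and you are moreover applying the IFT in the $(w,y)$ chart, so the relevant derivative is $\partial\tilde H/\partial y$ at fixed $w$, not at fixed $x$. This does work out --- the chart defect is $-(\partial H/\partial x)\cdot(\partial_y\phi/\partial_x\phi)$ with the second factor $O(|a|)$ by Lemma~\ref{F+vertical}, and the remaining derivative bound follows from a Cauchy estimate on a $y$-slice of fixed radius $<\alpha$ --- but it is a step, not a tautology, and the $n$-uniformity depends on the $y$-range being the fixed disk $|y|\leq\alpha$. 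Second, the estimate $\frac{\partial_y\phi_{a,-}^2}{\partial_x\phi_{a,-}^2}=-p'(y)+O(|a|)$ is stated only on $\Omega_a$, i.e.\ outside the $|a|\alpha$-neighbourhood of $C(p)$, so your identity $H=-2y+O(|a|)$ holds only there; that is why the paper introduces the auxiliary threshold $\gamma$ with $\{G_0^+\leq r\}\cap\{|y|\leq\gamma\}$ disjoint from $C(p)$ and does the IFT in $|y|\leq\gamma$ first. Since your solution satisfies $y=O(|a|)$, it lands safely inside $|y|\leq\gamma$, but for uniqueness (the critical locus in $C_n$ is \emph{only} this graph) you are implicitly restricting to $C_n\cap\Omega_a$, just as the paper does. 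With those two remarks supplied, the proposal is sound.
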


\begin{figure}[h]
\centering
\psfrag{G=r}{$G_a^+=\frac{r}{2^n}$}\psfrag{G=r+lambda}{$G_a^+=\frac{r+\lambda}{2^n}$}\psfrag{G=r-\lambda}{$G_a^+=\frac{r-\lambda}{2^n}$}
\includegraphics[height=3cm]{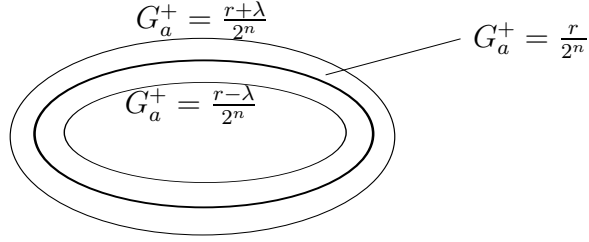}
\caption{The thickening of $G_a^+=\frac{r}{2^n}$}
\end{figure}

\begin{proof} Fix $\gamma$ so that the set $\{
G_0^+\leq r\}\cap \{|y|\leq \gamma\}$ is disjoint from $C_p$. Then
for all $|a|<\delta'$ the set $\{ G_a^+\leq r\}\cap \{|y|\leq
\gamma\}$ is disjoint from $C_p$.

The inverse function theorem implies that $\forall n$ $\exists
\delta_n$ s.t. $|a|<\delta_n$ the critical locus in
$\{\frac{r-\lambda}{2^n}\leq G_a^+\leq\frac{r+\lambda}{2^n}\}\cap
\{|y|\leq \gamma\}$ is an annulus on the component that is a
perturbation of $y=0$.

Since in the region

$$\{\frac{r-\lambda}{2^n}\leq G_a^+\leq \frac{r+\lambda}{2^n}\}\cap \{\gamma<|y|<\alpha\}\cap \{|p(y)-x|\geq a|\alpha|\}$$
the foliation ${\cal F}_a^-$ is $a$-close to ${\cal F}_0^-$ and by
Lemma \ref{F+vertical} ${\cal F}_a^+$ is almost vertical, for
$|a|<\delta''$ there are no points of the critical locus in this
region.

Take some $n$ and fix some connected component
$$\{\frac{r-\lambda}{2^n}\leq G_a^+\leq \frac{r+\lambda}{2^n}\}\cap \{|y|\leq
\gamma\}.$$

Let us show that the critical annuli in this connected component
persists for all $|a|\leq \delta''$. ${\cal C}_a$ is a analytic
set in the region

$$\{(x,y,a) | \ \frac{r-\lambda}{2^n}\leq G_a^+(x,y)\leq \frac{r+\lambda}{2^n}, |y|<\gamma, |a|<\delta''\}$$

There are no zeroes of $\mbox{grad}G_a^-$ on the leaves of ${\cal
F}_a^+$ on curve $|y|=\gamma$. Therefore,  index of $\mbox{grad}
G_a^-$ is constant. At $a=0$ it is equal to $1$. Thus, the
critical annulus in

$$\{(x,y,a) | \ \frac{r-\lambda}{2^n}\leq G_a^+(x,y)\leq \frac{r+\lambda}{2^n}, |y|<\gamma, |a|<\delta''\}$$

\noindent persists.
\end{proof}

\begin{figure}[h]
\centering
\psfrag{|y|=alpha}{$|y|=\alpha$}\psfrag{|y|=kappa}{$|y|=\kappa$}\psfrag{|u|=aalpha}{$|u|=|a|\alpha$}
\includegraphics[height=4cm]{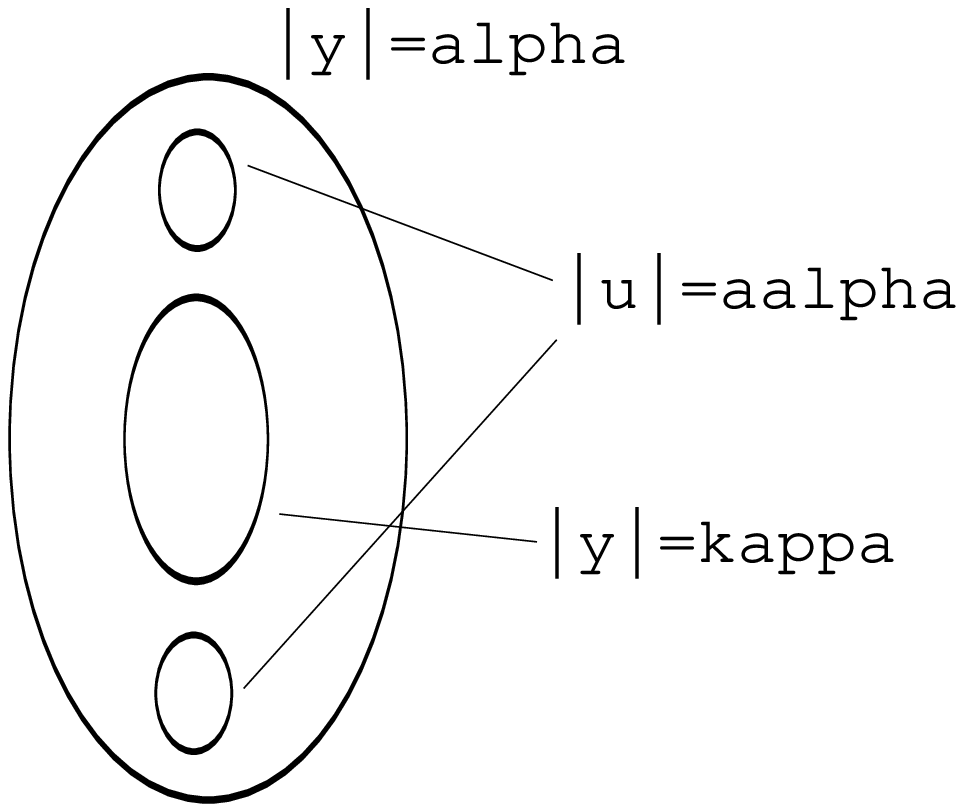}
\caption{The critical locus in $\{|a|\alpha\leq|p(y)-x|\}\cap
\{\frac{r}{2^{n+1}}\leq G_a^+\leq \frac{r}{2^n}\}\cap \{|y|\geq
\kappa\}$}
\end{figure}

\begin{lemma}\label{a-neighborhood} There exist $\kappa$ and $\delta$ such that for all
$|a|<\delta$ the critical locus in each connected component
$\{|a|\alpha\leq|p(y)-x|\}\cap \{\frac{r}{2^{n+1}}\leq G_a^+\leq
\frac{r}{2^n}\}\cap \{|y|\geq \kappa\}$ is an annulus with two
holes and is a graph of  function $y(\phi_{a,-})$.
\end{lemma}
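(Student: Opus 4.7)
The plan is a perturbation argument from $a=0$.

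\medskip

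First, I identify the degenerate critical locus in the given region. By Corollary \ref{degenerate-critical-locus}, ${\cal C}_0=\bigl(\{y=0\}\cup\bigcup_k\{x=p^{-k}(0)\}\bigr)\cap U_0^+\cap U_0^-$; intersecting with our region (where $|y|\geq\kappa>0$, and where the connected component containing $\xi_n$ has been fixed) eliminates the $\{y=0\}$ piece and leaves the vertical segment $\{x=\xi_n,\ \kappa\leq|y|\leq\alpha\}$, with $\xi_n$ the unique root of $p^n$ in the chosen component (this uses $G_p(\xi_n)=G_p(0)/2^n\in(r/2^{n+1},r/2^n)$, which is a consequence of $G_p(0)<r<2G_p(0)=G_p(c)$). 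Topologically this is an annulus.

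\medskip

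Next I argue smoothness and identify the topology. The critical locus is the analytic zero set of the form $\omega$ from Section \ref{sec:critical_locus} (Lemma \ref{analiticity}); at $a=0$ on the interior of $\{x=\xi_n\}\cap\{\kappa<|y|<\alpha\}$, the form has a simple zero because $\phi_{0,+}^{2^n}(x,y)=b_p^{2^n}(x)$ has a non-degenerate critical point at $\xi_n$ and because the horizontal leaves of ${\cal F}_0^-$ are transverse to $\{x=\xi_n\}$. The implicit function theorem then produces a smooth analytic curve ${\cal C}_a$ close to $\{x=\xi_n\}$ for small $|a|$. I choose $\kappa$ so that the two solutions $y=\pm\sqrt{\xi_n-c}$ of $p(y)=\xi_n$ lie strictly in $\kappa<|y|<\alpha$; then for $a\neq 0$ the parabolic tube $\{|p(y)-x|=|a|\alpha\}$ excises two small disks from the perturbed annulus near these points, yielding four boundary circles ($|y|=\kappa$, $|y|=\alpha$, and the two tube circles) --- an ``annulus with two holes''. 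The $G_a^+$-level sets contribute no boundary since $\xi_n$ is strictly interior to $\{r/2^{n+1}<G_p<r/2^n\}$. Ghost components away from $\{x=\xi_n\}$ are ruled out by compactness: any subsequential limit as $a_k\to 0$ of such points in ${\cal C}_{a_k}$ would produce a point of ${\cal C}_0\cap\{|y|\geq\kappa\}\setminus\{x=\xi_n\}$, which is empty.

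\medskip

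For the graph structure $y=y(\phi_{a,-})$: leaves of ${\cal F}_a^-$ are labelled by $\phi_{a,-}$, which is defined on $V_-$ with asymptotic $\phi_{a,-}\sim y$ and propagated along leaves. By Corollary \ref{horizontal_cones_F-} these leaves are horizontal-like in our region while ${\cal C}_a\approx\{x=\xi_n\}$ is vertical, so each leaf meets ${\cal C}_a$ transversely. The single-valued square $\phi_{a,-}^2=a(\phi_{a,-}\circ f_a^{-1})$ is holomorphic on our region (which lies in ${\cal D}_{-,1}$ by Lemma \ref{lem:swell}) and reduces to $p(y)-x$ at $a=0$ by Lemma \ref{lim_value2}; its two simple zeros on the perturbed annulus coincide with the centers of the two excised disks. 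After excision, $\phi_{a,-}^2$ is non-vanishing, and a holomorphic branch of $\phi_{a,-}=\sqrt{\phi_{a,-}^2}$ can be pinned down by the $V_-$-asymptotic, parametrizing ${\cal C}_a$ and producing the desired graph.

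\medskip

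The main technical obstacle is verifying single-valuedness of the chosen square-root branch on the ``annulus with two holes'': the monodromy of $\sqrt{\phi_{a,-}^2}$ around each of the two excised disks must combine correctly with the monodromy around the $|y|=\kappa,\alpha$ circles so that the overall branch is globally consistent with the $V_-$-asymptotic. This is handled using the explicit formula $\phi_{a,-}=(p(y)-x)^{1/2}+ah(x,y,a)$ on $V_-$ from Lemma \ref{limiting_value} to fix the branch at infinity, together with Lemma \ref{lim_value2} to control the extension uniformly up to the parabolic boundary as $a\to 0$.
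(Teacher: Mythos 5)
Your proposal takes a genuinely different route from the paper. The paper's proof is a one-liner ("It follows from Corollary \ref{horizontal_cones_F-}") whose intended content is structural: by Corollary \ref{horizontal_cones_F-} the leaves of ${\cal F}_a^-$ through the region are horizontal-like, and by the lemmas of Section \ref{F+} (which are proved uniformly in $n$ by pushing forward under $f_a^n$) each horizontal-like curve in the box carries exactly one non-degenerate critical point of $G_a^+$ strictly between the levels $r/2^{n+1}$ and $r/2^n$. The critical locus in the region is precisely this set of critical points, which gives both the graph structure over the leaf parameter and, by tracing the range of leaves, the annulus-with-two-holes topology, with a single $\delta$ for all $n$. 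Your proof instead perturbs from $a=0$ via the implicit function theorem plus a compactness argument for ghost components. This is the gap: both the IFT step and the compactness step, applied near the degenerate locus $\{x=\xi_n\}$, yield a $\delta_n$ depending on $n$, whereas the lemma needs a single $\delta$ valid for every $n$. Without the dynamical self-similarity argument (reducing the $n$-th box to the $n=0$ box by $f_a^n$) or the paper's direct counting via horizontal-like curves, your argument establishes the conclusion only one $n$ at a time.

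A secondary point: the monodromy concern you flag is genuine — $\sqrt{\phi_{a,-}^2}$ does have monodromy $-1$ around each excised disk, since each encloses a simple zero of $\phi_{a,-}^2$, so no single-valued branch exists on the annulus with two holes. But the fix is not the one you sketch (fixing the branch at infinity does not erase the nontrivial monodromy around the interior holes). The correct reading of "graph of $y(\phi_{a,-})$" is that $\phi_{a,-}$ is single-valued on each \emph{leaf} of ${\cal F}_a^-$ (by continuation from $V_-$), and the critical locus meets each such leaf exactly once; that is, it is a graph over the leaf space, not over a domain in the $\phi_{a,-}$-plane. This is again exactly what the horizontal-like-leaves-plus-unique-critical-point argument delivers directly.
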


\begin{proof} It follows from Corollary \ref{horizontal_cones_F-}
\end{proof}

We review the notion of the Milnor number of a singularity that we
will need for the proof of the next lemma.

Let $z_0$ be an isolated singular point of a holomorphic function
$f:\mathbb C^n\to \mathbb C$. Let $B_{\rho}$ be a disk of radius
$\rho$ around $z_0$. One can take $\rho$ and $\epsilon$ small
enough, so that the non-singular level sets $\{f=\epsilon\}\cap
B_{\rho}$ have homotopy type of a bouquet of spheres of dimension
$(n-1)$ (\cite{AGVII}, Section 2.1).

\begin{definition} The number of spheres in $\{f=\epsilon\}\cap
B_{\rho}$ is called the {\it Milnor number} of the singular point
$z_0$.
\end{definition}

\begin{lemma}[\cite{AGVII}, Section 2.1]\label{lem:Milnor_number} The Milnor number of the singular point $z_0$
is equal to ${\cal O}_{z_0}/<f_1,\dots,f_n>$, where ${\cal
O}_{z_0}$ are functions regular in a neighborhood of $z_0$ and
$<f_1,\dots, f_n>$ is the ideal in ${\cal O}_{z_0}$, generated by
the functions $f_1,\dots,f_n$.
\end{lemma}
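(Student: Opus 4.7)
The plan is to prove the classical identification of the Milnor number with the dimension of the Jacobian algebra by combining a Morsification argument with a local intersection theory computation. The statement should be read as asserting that the Milnor number $\mu(z_0)$ equals $\dim_{\mathbb C}\bigl(\mathcal O_{z_0}/\langle f_1,\dots,f_n\rangle\bigr)$, where $f_i=\partial f/\partial z_i$. I would proceed in three stages.

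First, I would set up the Morsification. Choose a generic linear form $\ell(z)=\sum c_i z_i$ and consider $\tilde f_t = f + t\,\ell$. Its critical points are the zeros of the map $F_t:\mathbb C^n\to\mathbb C^n$, $F_t=(f_1+tc_1,\dots,f_n+tc_n)$. For generic $c_i$ and small $t\neq 0$, Sard's theorem applied to $F_0$ together with a transversality argument shows that all critical points of $\tilde f_t$ inside a fixed Milnor ball $B_\rho$ are non-degenerate. Call this number $N(t)$. The first step of the proof is to show $N(t)$ is independent of the small parameter $t$ and equals the local mapping degree of $F_0=(f_1,\dots,f_n)$ at $z_0$, since all local zeros of $F_t$ near $z_0$ persist inside $B_\rho$ and none escape through $\partial B_\rho$ (the boundary estimate is automatic because $F_0$ is nonzero on $\partial B_\rho$ for small $\rho$).

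Second, I would identify this local degree algebraically. By standard local duality (or by the explicit Grothendieck residue pairing), for a holomorphic map $F_0:(\mathbb C^n,z_0)\to(\mathbb C^n,0)$ with an isolated zero, the local degree equals $\dim_{\mathbb C}\mathcal O_{z_0}/\langle f_1,\dots,f_n\rangle$. The cleanest way is to observe that the Koszul complex on $(f_1,\dots,f_n)$ is exact in positive degrees over $\mathcal O_{z_0}$ (the $f_i$ form a regular sequence because $z_0$ is an isolated common zero), so the quotient is a finite-dimensional Artinian $\mathbb C$-algebra, and its length is computed by the intersection multiplicity, which in turn equals the number of simple zeros of any small generic perturbation. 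This gives $N(t)=\dim_{\mathbb C}\mathcal O_{z_0}/\langle f_1,\dots,f_n\rangle$.

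Third, I would connect $N(t)$ to the topological Milnor number using Morse theory on the Milnor fiber. Choose $\epsilon$ regular for $\tilde f_t$ and look at the Milnor fiber $\tilde F_\epsilon = \tilde f_t^{-1}(\epsilon)\cap B_\rho$. Because $\tilde f_t$ has only non-degenerate critical points in $B_\rho$, standard Picard--Lefschetz/Morse theory shows that the pair $(B_\rho, \tilde F_\epsilon)$ is obtained from a ball by attaching one $n$-cell per critical point, so $\tilde F_\epsilon$ is homotopy equivalent to a bouquet of $N(t)$ copies of $S^{n-1}$. On the other hand, the Milnor fiber of $\tilde f_t$ for small $t$ is diffeomorphic to the Milnor fiber of $f$ (small perturbations do not change the diffeomorphism type of the Milnor fibration on the sphere $\partial B_\rho$), hence $\mu = N(t)$.

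The main obstacle is the algebraic step in stage two: justifying that the $\mathbb C$-dimension of $\mathcal O_{z_0}/\langle f_1,\dots,f_n\rangle$ equals the number of preimages of a generic value near $0$ under $F_0$. This is essentially a local Bezout / flat deformation statement, and the cleanest route is through the Weierstrass preparation theorem, which allows one to replace $\mathcal O_{z_0}$ by a finitely generated module over $\mathcal O_{0}\subset\mathcal O_{z_0}$ of rank equal to the generic fiber cardinality, making the equality of dimensions manifest.
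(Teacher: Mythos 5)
The paper gives no proof of this lemma at all; it is simply cited from [AGVII, Section~2.1], and your reconstruction follows precisely the classical argument of that reference (Morsify $f$ by a generic linear perturbation, identify the number of Morse points with the local degree of $\mathrm{grad}\,f$ and hence with $\dim_{\mathbb C}\mathcal O_{z_0}/\langle \partial_1 f,\dots,\partial_n f\rangle$, then recover the topological Milnor number by Morse theory on the Milnor fiber). Your argument is correct, and you also correctly supply the two things the statement leaves implicit, namely that ``equal to'' means equal to the $\mathbb C$-dimension of the quotient algebra and that $f_1,\dots,f_n$ are the partial derivatives $\partial f/\partial z_i$.
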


\begin{lemma}\label{description-omega} For $|a|<\delta$ the critical locus
in $\Omega_a$ is a smooth curve. In $\Omega_a^{\xi_n}$ it is a
connected sum of two disks $D_1$ and $D_2$ with two holes. The
boundary of $D_1$ belongs to $\{|y|=\alpha\}$, and the holes of
$D_1$ have boundaries on $\{|u|=|a|\alpha\}$. The boundary of
$D_2$ belongs to $\{G_a^+=\frac{r}{2^n}\}$ and the holes to
$\{G_a^+=\frac{r}{2^{n+1}}\}$.
\end{lemma}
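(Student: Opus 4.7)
My plan is to combine the boundary descriptions from Lemmas \ref{x-boundary} and \ref{a-neighborhood} with a Milnor-number argument at the unique node of the degenerate critical locus, and then read off the topology from the Euler characteristic and boundary-counting.

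First I analyze the degenerate limit. By Corollary \ref{degenerate-critical-locus},
$${\cal C}_0 \cap \Omega_0^{\xi_n} = \bigl(\{y = 0\} \cup \{x = \xi_n\}\bigr) \cap \Omega_0^{\xi_n},$$
which is a transverse union of two smooth pieces meeting at the unique node $(\xi_n, 0)$: a pair of pants $P$ (the region $\{r/2^{n+1} \leq G_p \leq r/2^n\}$ in the $x$-plane, whose three boundary circles reflect the self-similar structure of $G_p$) and a disk $E$ (the disk $\{|y| \leq \alpha\}$ inside the line $x = \xi_n$) carrying two marked points at $y = \pm\sqrt{\xi_n - c}$. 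Near $(\xi_n, 0)$ the defining function of ${\cal C}_0$ factors, up to a nonvanishing holomorphic factor, as $y(x - \xi_n)$, which is an $A_1$-singularity of Milnor number $1$ by Lemma \ref{lem:Milnor_number}.

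Next I would establish smoothness of ${\cal C}_a$ inside $\Omega_a^{\xi_n}$. Away from the node, ${\cal C}_0$ is regular and the implicit function theorem applied to the holomorphic families $\phi_{a,\pm}^{2^n}$ yields smoothness of ${\cal C}_a$ there for small $|a|$. Near $(\xi_n, 0)$, upper semicontinuity of the Milnor number under holomorphic deformation bounds the total Milnor number of ${\cal C}_a$ by $1$; the key step is to verify that this number drops to $0$, i.e.\ that the node actually smooths to a cylinder rather than persisting. I would check this by expanding the local defining function
$$\tilde w_a(x,y) = (\partial_x \phi_{a,+}^{2^n})(\partial_y \phi_{a,-}^2) - (\partial_y \phi_{a,+}^{2^n})(\partial_x \phi_{a,-}^2)$$
to first order in $a$ at $(\xi_n, 0)$: using $\partial_y \phi_{0,+}^{2^n} \equiv 0$ together with $p'(0) = 0$, the only surviving first-order term involves $\partial_a \partial_y \phi_{a,+}^{2^n}|_{a = 0}$, which is nonzero because $\phi_{a,+}^{2^n}$ genuinely acquires $y$-dependence at first order in $a$.

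Finally I would read off the topology. Lemma \ref{x-boundary} gives three boundary circles on the $G_a^+$-boundary (one on $\{G_a^+ = r/2^n\}$ and one on each component of $\{G_a^+ = r/2^{n+1}\}$); Corollary \ref{horizontal_cones_F-} yields one circle on $\{|y| = \alpha\}$; and two more arise as small perturbations of the points $(\xi_n, \pm\sqrt{\xi_n - c})$ and lie on $\{|u| = |a|\alpha\}$, giving six boundary circles in total. Starting from $\chi({\cal C}_0 \cap \Omega_0^{\xi_n}) = \chi(P) + \chi(E) - 1 = -1$, replacing the two marked points of $E$ by genuine holes costs $-2$, and smoothing the node to a cylinder costs a further $-1$, so $\chi({\cal C}_a \cap \Omega_a^{\xi_n}) = -4$. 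A connected orientable surface with $\chi = -4$ and six boundary circles has genus $0$, so it is a 6-holed sphere, which is precisely the topologist's connected sum $D_1 \# D_2$ of two pairs of pants, with boundary components distributed as in the statement. The main obstacle will be the first-order non-degeneracy check above; once that is in hand, the rest is stitching together already-proved local pictures.
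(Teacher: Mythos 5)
Your overall outline (Milnor number $=1$ at the degenerate node, boundary data from Lemmas \ref{x-boundary} and \ref{a-neighborhood}, read off the topology) matches the paper's, but the crucial non-degeneracy step goes a different route, and that route contains a genuine error. You propose to show the node opens by computing $\partial_a \tilde{w}_a(\xi_n,0)|_{a=0}$ and observing that it reduces to $\partial_a\partial_y\phi_{a,+}^{2^m}|_{a=0}(\xi_n,0)$, which you assert is nonzero ``because $\phi_{a,+}^{2^m}$ genuinely acquires $y$-dependence at first order in $a$.'' This is false for $n\geq 1$. Writing $\phi_{a,+}^{2^m}=\phi_{a,+}\circ f_a^m$ and $(X_m,Y_m)=f_a^m(x,y)$, one gets, at $a=0$,
$$\partial_a\partial_y\phi_{a,+}^{2^m}\big|_{a=0}(x,0)=b_p'(p^m(x))\cdot\partial_a\partial_y X_m\big|_{a=0}(x,0),$$
and the recursion $X_{k+1}=p(X_k)-aY_k$, $Y_{k+1}=X_k$ gives $\partial_a\partial_y X_m|_{a=0}(x)=-\prod_{j=1}^{m-1}p'(p^j(x))$. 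At $x=\xi_n$ this product contains the factor $p'(p^n(\xi_n))=p'(0)=0$ whenever $1\leq n\leq m-1$, and one is forced to take $m\geq n+1$ for $\phi_{a,+}^{2^m}$ to be defined near $(\xi_n,0)$. So the first-order coefficient vanishes identically for every $n\geq 1$ (and only the base case $n=0$ survives your check). The node at $\xi_n$ opens at an $n$-dependent higher order in $a$, so a first-order Taylor argument cannot establish non-degeneracy, and a fortiori cannot give the required uniform $\delta$ independent of $n$.

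The paper avoids this computation entirely. Instead of tracking the rate at which the node opens, it argues by contradiction: if some ${\cal C}_a\cap\Omega_a^{\xi_n}$ had a singular point, semicontinuity of the Milnor number (which totals $1$) forces that point to be a single node, hence locally a transverse union of two smooth disks. One branch must be horizontal-like with boundary on $\{|y|=\alpha\}$, so it meets the ``vertical'' locus where leaves of ${\cal F}_a^+$ have horizontal tangent; at such an intersection point of the critical locus the leaf of ${\cal F}_a^-$ would also have horizontal tangent, contradicting Corollary \ref{lem1}. This argument is simultaneously uniform in $n$ (Corollary \ref{lem1} holds on all of $\Omega_a$) and order-of-$a$--independent, which is precisely what a direct first-order expansion cannot provide. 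Your topological bookkeeping at the end (Euler characteristic / boundary count, instead of the paper's Ehresmann fibration argument) is a fine alternative once smoothness and connectedness are known, but the smoothness argument needs to be replaced by the paper's contradiction argument or a suitable $n$-uniform variant.
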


\begin{figure}[h]
\centering
\psfrag{deg}{\it The degenerate critical locus}
\psfrag{nondeg}{\it The nondegenerate critical locus}
\psfrag{y=alpha}{$|y|=\alpha$}\psfrag{Gp=r}{$G_p=\frac{r}{2^n}$}\psfrag{Gp=r/2}{$G_p=\frac{r}{2^{n+1}}$}
\psfrag{Ga=r}{$G_a^+=\frac{r}{2^n}$}\psfrag{Ga=r/2}{$G_a^+=\frac{r}{2^{n+1}}$}\psfrag{u=aalpha}{$|u|=|a|\alpha$}
\includegraphics[height=7cm]{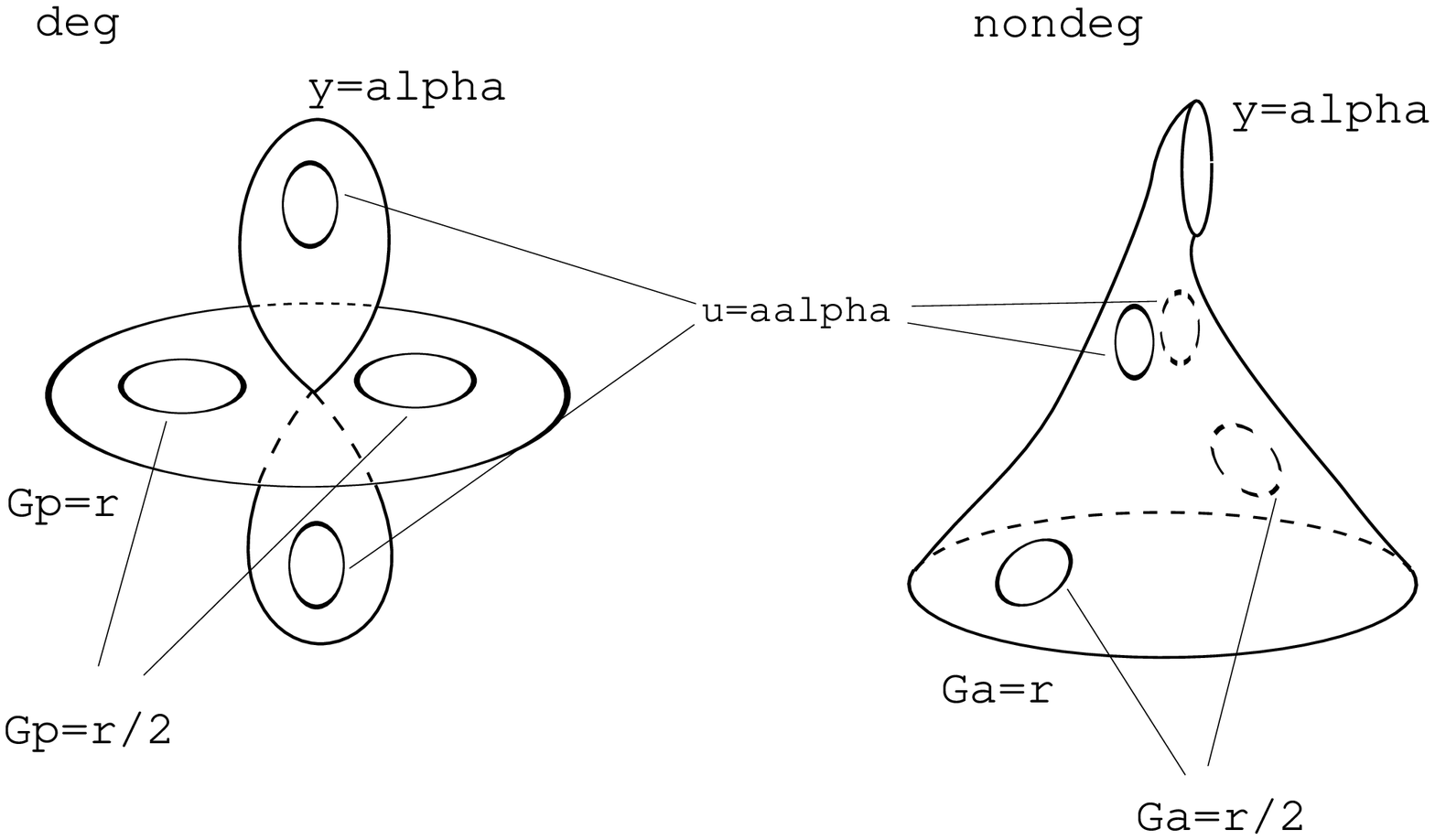}
\caption{The critical locus in $\Omega_a^{\xi_n}$}
\end{figure}

\begin{note} As $a\to 0$, the curve
degenerates to $(x-\xi_n)y=0$. The holes of $D_1$ degenerate to
points $(0,\xi_{n+1}),$ $(0,\xi'_{n+1}),$ where
$p(\xi'_{n+1})=p(\xi'_{n+1})=\xi_n$. The holes of $D_2$ tend to
circles $\{(x, 0)|\ G_p(x)=\frac{r}{2^{n+1}}\}$.
\end{note}

\begin{proof} Fix $\xi_n$. The critical locus in $\Omega_0^{\xi_n}$
is a singular curve. It is a union of two intersecting lines $y=0$
and $x=\xi_n$. By Lemma \ref{lem:Milnor_number} the Milnor number
of this singularity is 1. The critical locus ${\cal C}_a$ is the
zero level set of $w_a(x,y)$. By Lemmas \ref{x-boundary},
\ref{a-neighborhood} there exists $\delta$ so that the critical
locus in $\Omega_{a}^{\xi_n}$ for $|a|<\delta$ is transverse to
the boundary. Thus, there is $\epsilon$ so that the level sets of
$w_a(x,y)=c$, $|c|<\epsilon$ are transverse to the boundary of
$\Omega_a^{\xi_n}$ for all $|a|<\delta$. Let us consider
$(w_a(x,y),a):\Omega_a\times D_{\delta} \to \mathbb C\times
D_{\delta}$. By Ehresmann's Fibration Theorem the level sets of
this function form a locally trivial fibration. Thus, they are
homeomorphic one to the other. By Lemmas \ref{x-boundary} and
\ref{a-neighborhood}, the critical locus intersects the boundary
exactly as prescribed. Therefore, it is enough to show that the
critical locus ${\cal C}_a\Omega_a^{\xi_n}$ for $|a|<\delta$ is
non-degenerate.

Suppose there are critical level sets of the function $w_a(x,y)$.
Then the Milnor number of the corresponding singularity is 1.
Therefore, the singularity is locally an intersection of two
disks. It is easy to see that one of this curves should be a
'horizontal' curve, i.e. project one-to-one to $y$-axis. Its
boundary belongs to $\{|y|=\alpha\}$. This curve necessarily
intersects the 'vertical' curve on which the tangent line to
${\cal F}_a^+$ are horizontal. But then on the point of
intersection the tangent line to the foliation ${\cal F}_a^-$ is
horizontal. That is impossible by Lemma \ref{lem1}.

Therefore, the critical locus in $\Omega_a^{\xi_n}$ is noncritical
for $a\neq 0$. The conclusion of the lemma follows.
\end{proof}

\section{Extension of the critical locus up to $|a|\alpha$-neighborhood of
parabola.}\label{ext-a-neighborhood}

\begin{lemma}\label{lemma-ext-a-neighborhood} There exists $\delta$ so that for all $|a|<\delta$
the component of the critical locus, that is a perturbation of
$y=0$ can be extended up to $|a|\alpha$-neighborhood of parabola
as a graph of function $y(x)$.
\end{lemma}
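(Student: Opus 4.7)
The plan is to combine Lemma~\ref{description-omega} (which gives smoothness of the perturbation of $\{y=0\}$) with the horizontal invariant cone family of Lemma~\ref{horizontal} (which will force the tangent to be non-vertical), and to apply the implicit function theorem to parameterize the component as a graph $y=g(x,a)$.

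By Lemma~\ref{description-omega}, in each $\Omega_a^{\xi_n}$ the critical locus contains (for $|a|<\delta$ uniform in $n$) a smooth component $D_1^{(n)}$ perturbing $\{y=0\}\cap\Omega_0^{\xi_n}$. Glued to the near-infinity graph furnished by the corollary to Lemma~\ref{lemma_infinity}, these pieces assemble into a single connected smooth component $\Gamma_a\subset\mathcal{C}_a$ lying in $\{|y|\le\alpha\}\cap\{|p(y)-x|\ge|a|\alpha\}$, with $\Gamma_a\to\{y=0\}\cap(U_p\times\mathbb{C})\setminus C(p)$ (Hausdorff on compacts) as $a\to 0$.

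Next I show the tangent to $\Gamma_a$ is never vertical. Lemma~\ref{horizontal} supplies an $f_a$-invariant horizontal cone $|\xi|>C|\eta|$ on $\{G_a^+\le r'\}\cap\{|y|\le\beta\}$ for $r'>r$, $\beta>\alpha$. At $a=0$, the limiting curve $\{y=0\}$ has tangent $(1,0)$, which lies in any horizontal cone. Since $\Gamma_a$ is a smooth holomorphic family converging uniformly on compacts to $\{y=0\}$, its tangent direction also converges uniformly to $(1,0)$ and hence lies in the cone for $|a|$ small. Because the tangent is never vertical, $\Gamma_a$ projects injectively in $x$ and is a graph $y=g(x,a)$ over its projection, whose $x$-domain is exactly $\{x:(x,g(x,a))\in\{|p(y)-x|\ge|a|\alpha\}\}$, i.e., extends up to the $|a|\alpha$-neighborhood of the parabola. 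Formally, the local parameterization is read off from the implicit function theorem applied to
$$w_a(x,y)=\det\frac{\partial(\phi_{a,+}^2,\phi_{a,-}^2)}{\partial(x,y)},$$
which is holomorphic on $\{|p(y)-x|\ge|a|\alpha\}$ by Lemma~\ref{lim_value2}, with $a=0$ form $w_0(x,y)=2y(b_p^2)'(x)$; away from the zeros of $(b_p^2)'$ (the critical points $\xi_k=p^{-k}(0)$) the IFT gives a local graph directly, and across each $\xi_k$ the graph extends smoothly by the tangent argument above, with local pieces patched by uniqueness.

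The main obstacle is uniformity of $\delta$ over the infinitely many critical points $\xi_k$ accumulating on $J_p$: the pointwise IFT neighborhoods shrink near $\xi_k$ because $\partial_yw_a$ vanishes there at $a=0$. This is handled both by the uniformity in $n$ already built into Lemma~\ref{description-omega} (which guarantees the smooth structure of $D_1^{(n)}$ at every depth) and by the $f_a$-invariance of the horizontal cone family (Lemma~\ref{horizontal}), which propagates the tangent bound under iteration and thus yields a uniform $\delta$ controlling the whole of $\Gamma_a$.
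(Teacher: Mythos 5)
Your proposal does not address the region this lemma is actually about, and the key tool---a blow-up of the point where the critical locus is born out of $C(p)$---is missing entirely.

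First, the region. Lemma~\ref{lemma-ext-a-neighborhood} is invoked in Section~\ref{final} to extend the graph $y(x)$ through the part of $\hat{\Omega}_1$ that lies \emph{near the parabola} $C(p)$, i.e.\ where $G_a^+\geq r$, $|y|<\epsilon$, and $|a|\alpha \leq |p(y)-x|\leq \epsilon'$, in a neighborhood of the point $(c,0)\in C(p)$. This region is disjoint from $\Omega_a$ (where $G_a^+\leq r$) and from the neighborhood of $x=\infty$, so neither Lemma~\ref{description-omega} nor the corollary to Lemma~\ref{lemma_infinity} covers it. Your ``assembly'' of $D_1^{(n)}$ with the near-infinity piece is therefore circular: the existence of the connecting strip through $\hat\Omega_1$ up to the $|a|\alpha$-boundary is precisely what needs to be proved. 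Likewise, the critical points $\xi_k\in p^{-k}(0)$ that your last paragraph worries about all satisfy $G_p(\xi_k)<r$, so they lie in $\Omega_a$ and are irrelevant here; the real obstacle is not accumulation of $\xi_k$ on $J_p$, it is the \emph{shrinking} of the excluded $|a|\alpha$-neighborhood of $C(p)$ as $a\to 0$.

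Second, the IFT step. Your argument rests on the assertion that $w_a\to w_0$ uniformly, so that the simple zero of $w_0$ in $y$ persists. But uniformity fails near the $|a|\alpha$-boundary. The terms in $\phi_{a,-}^2(x,y)=a\,\phi_{a,-}\bigl(y,\tfrac{p(y)-x}{a}\bigr)$ are evaluated at the second argument $\tfrac{p(y)-x}{a}$, which stays of order $\alpha$ on $\{|p(y)-x|\sim|a|\alpha\}$. Differentiating in $x$ or $y$ therefore produces factors of $\tfrac{1}{a}$; the partial derivatives of $w_a$ grow like $\tfrac{1}{|a|}$ on a band of width $\sim|a|$ around $C(p)$, so there is no uniform IFT neighborhood, and ``convergence on compacts'' gives nothing because compact sets avoiding $C(p)$ do not reach the $|a|\alpha$-boundary as $a\to 0$. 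The horizontal-cone argument has the same defect: Lemma~\ref{horizontal} gives cone invariance in a fixed box $\{G_a^+\leq r'\}\cap\{|y|\leq\beta\}$, which again does not control the degeneration as $|p(y)-x|\downarrow|a|\alpha$.

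What actually makes the lemma work is the rescaling in the paper's proof: pass to $(u,y,v)=(p(y)-x,\,y,\,a/u)$, so that the moving boundary $|p(y)-x|=|a|\alpha$ becomes the fixed boundary $|v|=1/\alpha$, the pulled-back function $\tilde w=w\circ\pi$ extends holomorphically across $\{u=0\}$, and $\tilde w(0,y,v)$ has a simple zero at $y=0$ for all $|v|<1/\alpha$. Weierstrass preparation (uniformly in $v$) then yields the graph up to the $|a|\alpha$-boundary. Without some such uniformization---a blow-up, or an equivalent explicit estimate of $w_a$ after rescaling by $a$---there is no way to control the critical locus in the collapsing annulus, and your proposed argument does not close that gap.
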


\begin{proof}

The domain of definition of $\phi^2_{a,-}$ is $f_a(V_-)=\{(x,y)\
|\ |p(y)-x|\geq |a|\alpha, |p(y)-x|\geq |a||y|\}$.

Therefore, in $W$ the domain of definition of $\phi_{a,-}^2$ is
$\{|p(y)-x|\geq |a|\alpha\}$.

Denote $u=p(y)-x$.

Consider new variables $(u,y,v)=(u,y,\frac{a}{u})$. Denote by
$\pi$ the projection
$$\pi:(u,y,v) \mapsto (u,y,uv).$$

Notice that $\pi^{-1}$ blows up a point $u=0$ on each line
$y=y_0$.

Let us prove that one can extend the critical locus to
$$S=\{|u|<\beta, |y|<\epsilon, |v|<\frac{1}{\alpha}\}.$$

Note that $\phi_{a,+}$ can be extended to $S$ since it's a
well-defined holomorphic function in $\pi(S)$.
$\phi_{a,-}^2(x,y)=uv\phi_{uv,-}(y, 1/v)$.

By [LR] $\frac{\phi_{a,-}(x,y)}{y}$ extends to be a holomorphic
function to a neighborhood of $y=\infty$.

Therefore $\phi_{a,-}^2$ extends to $S$. Moreover, notice that on
blown-up lines $\phi_{uv,-}^2=0$.

The critical locus is given by the zeroes of the function
$$w=\frac{d\phi_{a,+}\wedge d\phi_{a,-}^2\wedge da}{dx \wedge dy
\wedge da}.$$

Let
$$\tilde{w}=-\frac{d\phi_{uv,+}\wedge d\phi_{uv,-}^2\wedge dv}{u
du\wedge dy \wedge dv}.$$

Notice that $\tilde{w}=w\circ \pi$.

$$\tilde{w}=-uv \frac{\partial \phi_{uv,+}}{\partial x}(p(y)-u,
y)\frac{\partial \phi_{uv,-}}{\partial x}(y,\frac{1}{v})+$$

$$\left(\frac{\partial \phi_{uv,+}}{\partial
x}(p(y)-u,y) p'(y)+\frac{\partial \phi_{uv,+}}{\partial
y}(p(y)-u,y)\right)\frac{\partial \phi_{uv,-}}{\partial
y}(y,\frac{1}{v})$$

Note that $\frac{\phi_{a,-}}{y}=1+aH(x,\frac{1}{y},a)$, where $H$
is a holomorphic function in some neighborhood of $(x,0,0)$.

$\lim_{u\to 0} v\frac{\partial \phi_{uv,-}}{\partial x}=0$

$\lim_{u\to 0} \frac{\partial \phi_{uv,-}}{\partial y}=1$

$$\tilde{w}(0,y,v)=b'_p(p(y))p'(y)$$

Note that for all $|v|<\frac{1}{\alpha}$ $|y|<\alpha$
$\tilde{w}(0,y,v)=0$ only when $y=0$ and the zero is not multiple.
Therefore, by Weierstrass theorem for $v$, $y=g(u,v)$.
\end{proof}











\section{Description of the critical locus.}\label{final}

Fix some $\epsilon$. Denote

$$\hat{\Omega}_1=\{(x,y)\in \mathbb CP^2 |\ |y|<\epsilon,
|p(y)-x|\geq |a|\alpha, G_a^+(x,y)\geq r\}.$$

\begin{figure}[h]
\centering
\psfrag{omega}{$\Omega$} \psfrag{1Omeg}{$\hat{\Omega}_1$}
\psfrag{x=infty}{$x=\infty$} \psfrag{|a|alpha}{$|a|\alpha$}
\includegraphics[height=5cm]{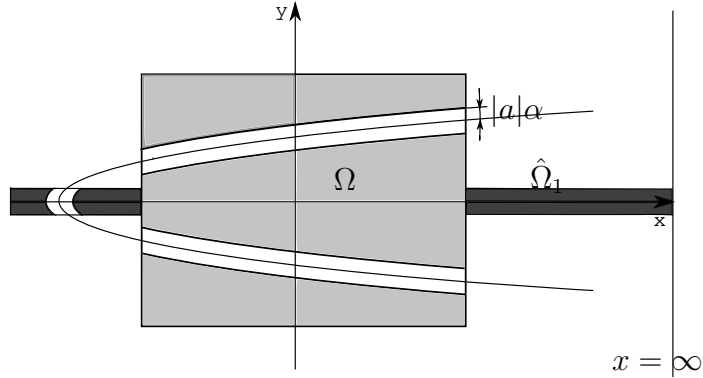}
\caption{Domain $\hat{\Omega}_1$}
\end{figure}

\begin{lemma} There exists $\delta$ such that
$\forall |a|<\delta$ the critical locus ${\cal C}_a$ in
$\hat{\Omega}_1$ is a punctured disk, with a hole removed. The
puncture is at the point $(\infty, 0)$, the boundary of the hole
belongs to $\{|p(y)-x|=|a|\alpha\}$.
\end{lemma}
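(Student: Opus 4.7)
The plan is to realize $\mathcal{C}_a\cap\hat{\Omega}_1$ as a single holomorphic graph $\{y=y(x)\}$ over the $x$-projection of $\hat{\Omega}_1$, and then read off the topology of that projection. The graph will be assembled from two earlier descriptions. Near $x=\infty$, the corollary to Lemma \ref{lemma_infinity} already gives $\mathcal{C}_a$ as a graph $y(x)$ in $\{|y|\leq\epsilon\}\cap\{|x|\geq\alpha\}$, passing through $(\infty,0)$ with the tangent $2\,dy+C\,dt=0$. Near the parabola, Lemma \ref{lemma-ext-a-neighborhood} gives a graph $y(x)$ for the perturbation-of-$\{y=0\}$ component, valid up to $\{|p(y)-x|=|a|\alpha\}$. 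For $\delta$ small, $G_a^+\geq r$ together with $|p(y)-x|\geq|a|\alpha$ forces $|x|\geq\alpha$ outside the parabola neighborhood, so the two domains together cover $\hat{\Omega}_1$. Since both descriptions pick out the same branch (a perturbation of $\{y=0\}$) and $\mathcal{C}_a$ is a closed analytic set, they must agree on the overlap and fit together into a single holomorphic graph.

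Next I would rule out ghost components of $\mathcal{C}_a$ inside $\hat{\Omega}_1$, by the compactness argument used in Lemma \ref{description-omega}. By Corollary \ref{degenerate-critical-locus}, $\mathcal{C}_0\cap\hat{\Omega}_1$ reduces to the single horizontal piece $\{y=0\}$, because the vertical lines $\{x=p^{-k}(0)\}$ lie in $\{G_p<r\}$ and hence do not enter $\hat{\Omega}_1$. Any extra component of $\mathcal{C}_a$ in $\hat{\Omega}_1$ would, as $a\to 0$, have to accumulate on $\mathcal{C}_0\cap\hat{\Omega}_1$, and thus on the already-tracked graph, contradiction.

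With the global graph in hand, its topology is determined by its $x$-shadow. Since $y(x)\to 0$ uniformly as $a\to 0$, for small $\delta$ this shadow is a small perturbation of $\{G_p(x)\geq r\}\setminus\{|x-c|<|a|\alpha\}$ in $\mathbb{CP}^1_x$. Because $p$ has disconnected Julia set and $G_p(0)<r<G_p(c)$, the set $\{G_p\geq r\}$ is a closed topological disk in $\mathbb{CP}^1_x$ containing both $\infty$ and $c$ as interior points; removing the small disk around $c$ produces a closed topological annulus with boundary circles on (perturbations of) $\{G_p=r\}$ and $\{|x-c|=|a|\alpha\}$. The graph over this annulus is a closed annulus in $\mathbb{CP}^1\times\mathbb{CP}^1$ with $(\infty,0)$ as an interior point; since $\mathcal{C}_a\subset\mathbb{C}^2$ excludes this point, one obtains exactly a punctured disk with a hole removed, with the puncture at $(\infty,0)$ and the hole boundary on $\{|p(y)-x|=|a|\alpha\}$.

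The main obstacle is the gluing/ghost-component step: one has to verify quantitatively that for small $a$ the two domains of Lemmas \ref{lemma_infinity} and \ref{lemma-ext-a-neighborhood} genuinely exhaust $\hat{\Omega}_1\cap\{|y|<\epsilon\}$, and that the $a\to 0$ limit forces any candidate extra branch to lie on $\{y=0\}$. Once these points are settled, the topological identification of the shadow as a punctured annulus is essentially automatic from the disconnected-Julia-set picture.
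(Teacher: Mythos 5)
You have the same basic ingredients as the paper (Corollary~\ref{degenerate-critical-locus} to identify ${\cal C}_0\cap\hat{\Omega}_1=\{y=0\}\setminus\{(c,0)\}$, Lemma~\ref{lemma_infinity} near $x=\infty$, Lemma~\ref{lemma-ext-a-neighborhood} near the parabola, a graph-over-the-$x$-shadow description), but there is a concrete gap: the claim that ``$G_a^+\geq r$ together with $|p(y)-x|\geq|a|\alpha$ forces $|x|\geq\alpha$ outside the parabola neighborhood, so the two domains together cover $\hat{\Omega}_1$'' is false. Recall that $\alpha$ is chosen large in Section~\ref{bottcher_coordinates} (it must satisfy~(\ref{1}) and~(\ref{2}), which in particular require $\alpha$ to dominate $|c|$), while the $|u|<\beta$ in the proof of Lemma~\ref{lemma-ext-a-neighborhood} is a fixed small radius. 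So a point such as $(x,y)$ with $|y|<\epsilon$, $|x|<\alpha$, $x$ at moderate distance from $c$ (e.g. distance between $\beta$ and $\alpha-|c|$), and $G_p(x)>r$ lies in $\hat{\Omega}_1$ for small $a$ but in neither of the two lemma domains. The set $\{G_p\geq r\}$ is the unbounded component outside the level curve $G_p=r$ and contains the whole region between that curve and $\{|x|=\alpha\}$, which is not exhausted by a small neighborhood of $c$ together with $\{|x|\geq\alpha\}$.

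The paper closes exactly this gap with a third piece that you omit: between the two lemma domains it extends the graph ``by the inverse function theorem\dots to $\{G_a^+(x,y)\geq r\}$ excluding $\epsilon$-neighborhood of $C_p$,'' using that ${\cal C}_0$ there is the smooth graph $\{y=0\}$ with nondegenerate transversality, so that for $|a|<\delta$ the critical locus persists as a nearby graph $y(x)$ over this middle annular region. Your ghost-component/compactness paragraph could in principle be retooled to supply this (it already shows any branch of ${\cal C}_a$ in $\hat\Omega_1$ must converge to $\{y=0\}$ as $a\to0$, and near a smooth transversal zero set the implicit function theorem gives a graph), but as written you rely on a false covering claim rather than on that argument, so the proof has a hole in the middle region.
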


\begin{proof} By Lemma \ref{degenerate-critical-locus} the
degenerate critical locus ${\cal C}_0$ in $\hat{\Omega}_1$ is
$y=0$, with point $x=c$ removed. By Lemma \ref{lemma_infinity} it
persists in some neighborhood of $x=\infty$. By the inverse
function theorem it can be extended as a graph of function $y(x)$
to $\{~G_a^+~(x,y)~\geq~r~\}$ excluding $\epsilon$-neighborhood of
$C_p$. By Lemma~\ref{lemma-ext-a-neighborhood} it can be extended
up to $a|\alpha|$-neighborhood of $C_p$.
\end{proof}

In the following $3$ lemmas we show that every component of the
critical locus intersects $\Omega$. It follows from Lemma
\ref{description-omega} that it consists of one component.

\begin{lemma}\label{boundary} Let $C_a$ be a component of the critical locus ${\cal C}_{a}$. Then
there exists a point on $\partial C_a$ that belongs to $J_a^+\cup
J_a^-$.
\end{lemma}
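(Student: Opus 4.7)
I will argue by contradiction: suppose $\partial C_a \cap (J_a^+ \cup J_a^-) = \emptyset$. Since $C_a$ is a connected component of the analytic set $\mathcal{C}_a \subset \mathcal{U}_a^+ \cap \mathcal{U}_a^-$, it is relatively closed in $U_a^+ \cap U_a^-$, so any point of $\overline{C_a} \setminus C_a$ must lie in $\mathbb{C}^2 \setminus (U_a^+ \cap U_a^-) = K_a^+ \cup K_a^-$. In the hyperbolic regime considered here (small perturbations of $x^2+c$ with $c$ outside the Mandelbrot set) one has $K_a^\pm = J_a^\pm$, so our hypothesis forces $\overline{C_a} = C_a$; that is, $C_a$ is a closed analytic subset of $\mathbb{C}^2$. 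It must moreover be unbounded, since $\mathbb{C}^2$ admits no compact positive-dimensional analytic subvariety.

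The key tool is the auxiliary function $h := G_a^+ + G_a^-$. This is continuous and strictly positive on $U_a^+ \cap U_a^-$, and its restriction to any complex curve in $U_a^+ \cap U_a^-$ is harmonic. From $\phi_{a,+} \sim x$ on $V_+$ and $\phi_{a,-} \sim y$ on $V_-$ one sees that $h(z) \to \infty$ as $|z| \to \infty$ in $\mathbb{C}^2$; in particular every sublevel set $\{h \leq M\}$ is bounded. Since $C_a$ is closed in $\mathbb{C}^2$, the intersection $\{h \leq M\} \cap C_a$ is then compact, so $h|_{C_a}$ is a proper function. I will also show $\inf_{C_a} h > 0$: otherwise a sequence $z_n \in C_a$ with $h(z_n) \to 0$ would, by properness, have a bounded subsequence converging to $z_\infty \in \overline{C_a} = C_a$ with $h(z_\infty) = 0$, forcing $z_\infty \in J_a^+ \cap J_a^-$ and contradicting $C_a \subset U_a^+ \cap U_a^-$.

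The contradiction now follows from the minimum principle. Since $h|_{C_a}$ is proper on $C_a$ and bounded below by a positive constant, it attains its infimum at some interior point $z_* \in C_a$ (after passing, if necessary, to the normalization so that we work on an honest connected Riemann surface). But a non-constant harmonic function on a connected Riemann surface cannot have an interior minimum, while $h|_{C_a}$ is non-constant because it tends to $\infty$ at infinity of $C_a$. This rules out the hypothesis, so $\partial C_a$ must meet $J_a^+ \cup J_a^-$. The main subtlety is verifying that in our setting the boundary points of a component of $\mathcal{C}_a$ can accumulate only on $J_a^+ \cup J_a^-$ (which reduces to $K_a^\pm = J_a^\pm$ for the hyperbolic perturbations considered), together with the use of the asymptotic estimates of $\phi_{a,\pm}$ to guarantee that $h$ is a genuine proper exhaustion on $C_a$.
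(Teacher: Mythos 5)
Your proof is correct and is a detailed expansion of the paper's one-line argument, which likewise uses the positivity and pluriharmonicity of $G_a^+ + G_a^-$ together with the minimum principle to rule out a component whose frontier misses $J_a^+\cup J_a^-$. The only (harmless) detour is your invocation of $K_a^\pm = J_a^\pm$: a frontier point of $C_a$ is by construction a limit of points of $U_a^+\cap U_a^-$ while itself lying outside $U_a^+\cap U_a^-$, hence it lies on $\partial U_a^+\cup\partial U_a^-=J_a^+\cup J_a^-$ automatically, with no hyperbolicity hypothesis needed.
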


\begin{proof} Consider $\left(G_a^+ + G_a^-\right)$. This function is pluriharmonic and strictly positive
in $U_a^+\cap U_a^-$. Therefore, $\inf \left(G_a^+ + G_a^-\right)$
cannot be attained at the interior point.
\end{proof}

\begin{lemma}\label{fundamentalJ} There exists $\delta$ such that for all $|a|<\delta$
$J_a^+\cap \Omega$ is a fundamental domain for $J_a^+\backslash
J_a$.
\end{lemma}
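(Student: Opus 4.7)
The plan is to show that for each $(x,y)\in J_a^+\setminus J_a$, the $f_a$-orbit $\{(x_n,y_n):=f_a^n(x,y)\}_{n\in\mathbb Z}$ meets $\Omega\cap J_a^+$ in exactly one point. The starting observation is a reformulation of membership in $\Omega$ along the orbit: from $f_a^{-1}(x,y)=(y,(p(y)-x)/a)$ we have $y_{n-1}=(p(y_n)-x_n)/a$, so $|p(y_n)-x_n|\geq|a|\alpha\Longleftrightarrow|y_{n-1}|\geq\alpha$. Since $G_a^+\equiv 0\leq r$ on $J_a^+$,
$$(x_n,y_n)\in\Omega\cap J_a^+\ \Longleftrightarrow\ (x_n,y_n)\in J_a^+,\ |y_n|\leq\alpha,\ |y_{n-1}|\geq\alpha,$$
so $\Omega\cap J_a^+$ records exactly those times at which the orbit transitions from the region $\{|y|>\alpha\}$ to the region $\{|y|\leq\alpha\}$.

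The monotonicity step is that the condition $|y_n|\leq\alpha$ is absorbing along forward orbits in $J_a^+$. If $(x_n,y_n)\in J_a^+\subset K_a^+$ has $|y_n|\leq\alpha$, it cannot lie in $V_-$ (which requires $|y|>\alpha$) nor in $V_+$ (which is disjoint from $K_a^+$ by Lemma~\ref{V+invariance}), so it lies in $W$, giving $|x_n|\leq\alpha$; hence $|y_{n+1}|=|x_n|\leq\alpha$, and iterating gives $|y_m|\leq\alpha$ for all $m\geq n$.

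I then need the two tail conditions. Tail (i): for $n\ll 0$, $|y_n|>\alpha$, which is immediate from $(x,y)\in U_a^-$ since the backward orbit eventually enters $V_-$. Tail (ii), the main obstacle, is that $|y_n|\leq\alpha$ for some $n$. Suppose otherwise; combined with the dichotomy above, the entire forward orbit lies in $V_-$, so $|y_{n+1}|=|x_n|<|y_n|$ strictly decreases to a limit $L\geq\alpha$. Passing to a subsequence $(x_{n_k},y_{n_k})\to(x^*,y^*)\in J_a^+$ we get $|x^*|=|y^*|=L$, using $|x_n|=|y_{n+1}|\to L$. Forward invariance of $J_a^+\subset K_a^+$ forces $|p(x^*)-ay^*|\leq L$ (else $f_a(x^*,y^*)\in V_+$). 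But since $|x^*|=L\geq\alpha$, inequality~(\ref{2}) gives
$$|p(x^*)-ay^*|\geq |p(x^*)|-|a||y^*|>(2R+1)L-|a|L=(2R+1-|a|)L>L,$$
provided $|a|<2R$, i.e., for all small $a$. This contradiction gives tail (ii).

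Combining the three ingredients, there is a unique smallest $N\in\mathbb Z$ with $|y_N|\leq\alpha$, and by minimality $|y_{N-1}|>\alpha$; hence $(x_N,y_N)\in\Omega\cap J_a^+$. For uniqueness, if $n<N$ then $|y_n|>\alpha$ and $(x_n,y_n)\notin\Omega$; if $n>N$ then $n-1\geq N$, so the monotonicity step forces $|y_{n-1}|\leq\alpha$, which gives $|p(y_n)-x_n|=|a||y_{n-1}|\leq|a|\alpha$, so again $(x_n,y_n)\notin\Omega$. Hence $\Omega\cap J_a^+$ meets each orbit of $f_a$ in $J_a^+\setminus J_a$ in exactly one point, which is the fundamental domain property.
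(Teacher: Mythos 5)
Your proof is correct in substance and takes a genuinely different route from the paper, which simply cites Hubbard--Oberste-Vorth II and asserts that the argument given there for perturbations of polynomials with connected Julia set carries over to the disconnected case. You instead give a direct dynamical argument: you translate membership in $\Omega\cap J_a^+$ into a ``first entrance time'' condition for $|y_n|\leq\alpha$ along the orbit, show that the region $\{|y|\leq\alpha\}$ is absorbing on $J_a^+$, show that the backward orbit eventually escapes it, and show that the forward orbit eventually enters it by contradiction using the expansion estimate~(\ref{2}). This is a self-contained and fairly elementary replacement of the citation, and the equivalence $|p(y_n)-x_n|\geq|a|\alpha\Longleftrightarrow|y_{n-1}|\geq\alpha$ is a nice clean way to organize the bookkeeping.

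Two small points deserve attention. First, Tail~(i) quietly uses $J_a^+\setminus J_a\subset U_a^-$; in general a point of $J_a^+\setminus J_a^-$ could in principle lie in $\mathrm{int}\,K_a^-$, so you should say explicitly that in the horseshoe regime ($p$ with disconnected Julia set, $a$ small) one has $J_a^\pm=K_a^\pm$ and $J_a=K_a$, so that $J_a^+\setminus J_a=K_a^+\setminus K_a^-\subset U_a^-$. Second, in Tail~(ii) the orbit need not lie entirely in $V_-$ (the three sets $V_+,V_-,W$ do not strictly cover $\mathbb C^2$; e.g.\ $|x|=|y|>\alpha$ is omitted), and ``strictly decreases'' should be ``is non-increasing'': from $(x_n,y_n)\notin V_+$ one only gets $|x_n|\leq|y_n|$, hence $|y_{n+1}|\leq|y_n|$. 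Your limit-point argument goes through unchanged with the weak monotonicity, so this is only a matter of phrasing. Similar boundary-case looseness appears in the uniqueness step (if $|y_{n-1}|=\alpha$ exactly, the point lands on $\partial\Omega$), but that is exactly the kind of boundary identification that the fundamental-domain machinery of Lemma~\ref{abstract_lemma} is designed to absorb, so it is not a real obstruction.
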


\begin{proof}\cite{HOVII} There it is prove for H\'{e}non mappings that
are perturbations of hyperbolic polynomials with connected Julia
set. In the connected case the proof is the same.
\end{proof}

\begin{lemma}\label{lem2} Suppose $|a|<\delta$ and $C_a$ is a component of the critical locus. Then
there exists an iterate of  $f_a^n(C_a)$  that intersects
$\Omega$.
\end{lemma}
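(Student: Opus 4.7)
The plan is to combine Lemmas~\ref{boundary} and~\ref{fundamentalJ}. By Lemma~\ref{boundary}, the closure $\overline{C_a}$ meets $J_a^+\cup J_a^-$ at some point $z$. The configuration is symmetric between $f_a$ and $f_a^{-1}$, and between $\mathcal{F}_a^+$ and $\mathcal{F}_a^-$ (both foliations are $f_a$-invariant, and the critical locus $\mathcal{C}_a$ is their common tangency set, hence $f_a(\mathcal{C}_a)=\mathcal{C}_a$), so I may assume $z\in J_a^+$; the case $z\in J_a^-$ is handled by backward iterates together with the obvious $J_a^-$-analogue of Lemma~\ref{fundamentalJ}.

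Assume first that $z\in J_a^+\setminus J_a$. Lemma~\ref{fundamentalJ} then produces an integer $n$ with $f_a^n(z)\in J_a^+\cap\Omega$. Since $f_a^n$ is a biholomorphism and $\mathcal{C}_a$ is invariant, $f_a^n(z)\in\overline{f_a^n(C_a)}$. Because $G_a^+(f_a^n(z))=0<r$, the first defining inequality of $\Omega$ holds strictly at $f_a^n(z)$. If the other two inequalities $|y|\leq\alpha$ and $|p(y)-x|\geq|a|\alpha$ are also strict at $f_a^n(z)$, then $f_a^n(z)$ lies in the $\mathbb{C}^2$-interior of $\Omega$, and any sufficiently small neighborhood meets the analytic curve $f_a^n(C_a)$ inside $\Omega$, giving the conclusion. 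Should $f_a^n(z)$ fall on one of the thin walls $|y|=\alpha$ or $|p(y)-x|=|a|\alpha$, I would either replace $n$ by $n\pm 1$ (these walls are not $f_a$-invariant) or select a different boundary point of $C_a$ in $J_a^+$, which is an abundant Cantor-like set in our setting.

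The remaining case $z\in J_a$ reduces to the previous one: the Julia set $J_a$ is nowhere dense in $J_a^+\cup J_a^-$, while $\overline{C_a}\cap(J_a^+\cup J_a^-)$ is a non-trivial closed set, so it cannot be entirely contained in $J_a$, and I pick instead a boundary point of $C_a$ in $(J_a^+\cup J_a^-)\setminus J_a$. The main obstacle I foresee is the interior-point step: the curve $f_a^n(C_a)$ only accumulates on $f_a^n(z)$ without containing it, so one really needs $f_a^n(z)$ in the $\mathbb{C}^2$-interior of $\Omega$ for the curve to actually enter $\Omega$. The non-invariance of the side walls of $\Omega$ under $f_a$ and the abundance of boundary points of $C_a$ in $J_a^+$ should make this tractable, but the geometric bookkeeping requires care.
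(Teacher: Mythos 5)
Your argument in the case $z\in J_a^+\setminus J_a$ matches the paper, but the other two cases contain genuine gaps, and the paper's proof avoids both with a single dynamical argument.

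\textbf{Gap 1: the ``obvious $J_a^-$-analogue of Lemma~\ref{fundamentalJ}''.} No such analogue is established, and the symmetry you invoke is false. Conjugating $f_a^{-1}$ by the coordinate swap $(x,y)\mapsto(y,x)$ (and a rescaling) turns it into a H\'{e}non map with Jacobian parameter $1/a$, not $a$. Lemma~\ref{fundamentalJ} (quoting \cite{HOVII}) is proved in the small-Jacobian regime where $f_a$ is a perturbation of a one-dimensional polynomial; its hypotheses do not apply to $f_a^{-1}$ when $|a|$ is small, since then $|1/a|$ is large. Moreover, the domain $\Omega$ was constructed with $G_a^+$ in the defining inequality, so there is no reason $J_a^-\cap\Omega$ should be a fundamental domain for $J_a^-\setminus J_a$.

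\textbf{Gap 2: the $z\in J_a$ case.} That $J_a$ is nowhere dense in $J_a^+\cup J_a^-$ does not prevent the closed set $\overline{C_a}\cap(J_a^+\cup J_a^-)$ from lying entirely inside $J_a$ — a nowhere-dense set can contain plenty of closed sets, including singletons. Since Lemma~\ref{boundary} only guarantees \emph{one} boundary point in $J_a^+\cup J_a^-$, you cannot just ``pick another one outside $J_a$.''

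\textbf{What the paper does instead.} If $z\in J_a^-$, take $z_n\in C_a$ with $z_n\to z$, so $G_a^-(z_n)\to 0$ and $G_a^+(z_n)$ is bounded. Using the functional equations $G_a^+\circ f_a^{-1}=\tfrac12 G_a^+$ and $(G_a^--\log|a|)\circ f_a^{-1}=2(G_a^--\log|a|)$, choose $k_n$ so that $1<G_a^-(f_a^{-k_n}(z_n))\leq 2$; then $k_n\to\infty$ and $G_a^+(f_a^{-k_n}(z_n))\to 0$. Passing to a convergent subsequence produces a limit point $w$ with $G_a^+(w)=0$ and $G_a^->0$, i.e.\ $w\in J_a^+\setminus J_a$, lying in the closure of backward iterates of $C_a$. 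Now Lemma~\ref{fundamentalJ} applies. This handles every $z\in J_a^-$ — in particular $z\in J_a$ — at once, with no symmetry assumption and no topological claim about $J_a$. I'd recommend adopting this backward-propagation argument rather than trying to repair the symmetry route. Your concern about the interior-point step (that $f_a^m(z)$ should lie in the $\mathbb C^2$-interior of $\Omega$) is a fair observation, but it is the same minor issue the paper glosses over and not where the real difficulty lies.
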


\begin{proof}
Lemma \ref{boundary} states that there exists $z\in (J_a^+ \cup
J_a^-)\cap (\partial C_a)$. Suppose $z\in J_a^-$. Take a sequence
of points $z_n\in C_a$, $z_n\to z$.

$G_a^-(z_n)\to 0$ as $n\to \infty$; $G_a^+$ is bounded.

For every $n$ there exists $k_n$ such that
$1<G_a^-(f_a^{-k_n}(z_n))\leq 2$. Then $k_n\to \infty$.
$$G_a^+(f_a^{-k_n}(z_n))\to 0.$$

\noindent Taking a subsequence one may assume $f^{-k_n}(z_n)\to
z$. $z\in J_a^+\backslash J_a$. Since by Lemma \ref{fundamentalJ}
$f_a^m(z)\in J_a^+\cap \Omega$, there exists an iterate of $C_a$
that intersects $\Omega$.
\end{proof}

Below we will prove that ${\cal C}_a\cap \left(\Omega\cup
\hat{\Omega}_1\right)$ is a fundamental domain of the critical
locus.

The next lemma is a variation of the Classical Poincar\'{e}'s
Polyhedron Theorem on the fundamental domain under the group
action (\cite{Maskit}, IV.H).

\begin{lemma}\label{abstract_lemma} Let $C$ be a Riemann surface. Let $f:C\to C$ be an automorphism.
Let $D\subset C$ be an open domain. Suppose
\begin{enumerate}
\item $f_a^n(D)\cap D=\emptyset$;
\item $\partial D\subset C$ consists of countably many smooth
curves $\gamma_i$;
\item $\gamma_i$ are being paired by the map $f$;
\item for any sequence $\{z_n\}$, $z_n\in D$, $f_a^n(z_n)$
does not have an accumulation point in $S$
\end{enumerate}
Then $D$ is a fundamental domain of $S$ for the map $f_a$.
\end{lemma}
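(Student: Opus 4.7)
The plan is to verify the two defining properties of a fundamental domain: that the iterates $\{f^n(D)\}_{n\in\mathbb{Z}}$ are pairwise disjoint, and that the union of their closures is all of $C$. Disjointness follows immediately from hypothesis (1): if $f^m(D)\cap f^n(D)\neq\emptyset$ for some $m\neq n$, apply $f^{-n}$ to contradict $f^{m-n}(D)\cap D=\emptyset$. The substantive content is therefore the covering statement $\widetilde D := \bigcup_{n\in\mathbb{Z}} f^n(\overline D) = C$, which I would prove by the classical connectedness argument: assuming $C$ connected, and since $\widetilde D$ is nonempty, it suffices to show $\widetilde D$ is both open and closed in $C$.

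For openness, since $f$ is a homeomorphism it is enough to verify the property at a point $p\in\partial D$. By (2) the boundary near $p$ is a smooth arc $\gamma_i$, so a small disk neighborhood $U$ of $p$ in $C$ splits by $\gamma_i$ into two components, one of which lies in $D$. By (3), $\gamma_i$ is paired with some $\gamma_j\subset\partial D$ via a power $f^m$ of $f$, i.e.\ $f^m(\gamma_i)=\gamma_j$; equivalently $\gamma_i\subset\partial f^{-m}(D)$. Since $D$ and $f^{-m}(D)$ are disjoint by (1), both touch $\gamma_i$ smoothly, and $U\setminus\gamma_i$ has only two components, the other component of $U\setminus\gamma_i$ must lie in $f^{-m}(D)$. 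Hence $U\subset\overline D\cup f^{-m}(\overline D)\subset\widetilde D$, proving openness.

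For closedness, suppose $w_k=f^{n_k}(z_k)\to w\in C$ with $z_k\in\overline D$. If the indices $n_k$ stay in a finite subset of $\mathbb{Z}$, a subsequence has constant index $n$ and $w\in f^n(\overline D)\subset\widetilde D$ by continuity. If $|n_k|\to\infty$, hypothesis (4), which expresses proper discontinuity of the $\mathbb{Z}$-action of $f$ on $C$, forbids an accumulation point in $C$, contradicting $w_k\to w$. Combining openness, closedness and connectedness gives $\widetilde D=C$, and together with the first paragraph this shows $D$ is a fundamental domain. The main technical subtlety is this divergent-index case: (4) is stated for sequences in $D$ rather than $\overline D$, so to apply it honestly I would first slightly perturb each $z_k$ off the boundary into $D$ while preserving convergence of $f^{n_k}(z_k)\to w$ — possible because $\partial D$ is a smooth $1$-submanifold by (2) and $f^{n_k}$ is a local homeomorphism — or, equivalently, observe that (4) as stated is really just a sequential formulation of proper discontinuity of the cyclic group action, which automatically extends to $\overline D$.
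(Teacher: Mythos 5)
Your proof is correct and takes a genuinely different route from the paper's. The paper constructs an abstract Riemann surface $S$ by gluing the iterates $f^n(D)$ along the paired boundary curves, observes that the natural map $i:S\to C$ is injective by (1), and then proves $i$ is proper: for a sequence $z_n\to\partial S$ with $i(z_n)\to z\in C$, writing $z_n=f^{k_n}(z_n')$ with $z_n'\in D$, bounded $k_n$ yields an accumulation point of $\{z_n'\}$ in $\overline D$ which is interior to $S$ by the gluing from (2),(3) (contradicting $z_n\to\partial S$), and unbounded $k_n$ contradicts (4); a proper injective local homeomorphism onto a connected surface is then a homeomorphism. You instead show directly that $\widetilde D=\bigcup_n f^n(\overline D)$ is clopen in $C$: openness at a boundary point uses (2),(3) to identify the other side of $\gamma_i$ with the adjacent translate $f^{-m}(D)$, and closedness splits into bounded versus divergent indices exactly as the paper's properness argument does. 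The ingredients and case split are the same, but you avoid the abstract gluing construction and the covering-map lemma, trading them for the more elementary "nonempty clopen in a connected space" argument. You also noticed a small gap the paper silently passes over — hypothesis (4) concerns sequences in $D$, not $\overline D$ — and your perturbation-into-$D$ fix (choosing $z_k'\in D$ with $d(f^{n_k}(z_k'),f^{n_k}(z_k))<1/k$) is the right repair; the alternative remark that (4) "automatically extends" is weaker and best dropped in favor of that explicit argument. One further caveat, shared by the paper: both proofs tacitly assume the boundary curves $\gamma_i$ are pairwise disjoint and locally finite, so that a small neighborhood of a boundary point meets only one $\gamma_i$ and is split by it into exactly two pieces; this holds in the application but is not literally forced by hypothesis (2) as stated.
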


\begin{proof} Denote by $S$ a Riemann surface obtained by gluing
the images $f^n(D)$ to $D$. The natural map $i:S\to C$ is
injective by (1). Let us prove that it is proper. If it is not,
then there exists a sequence of points $z_n\in S$ so that $z_n\to
\partial S$, $i(z_n)\to z\in C$. Take $z_n'\in D$ so that
$z_n=f^{k_n}(z_n')$. If there are infinitely many same $k_n$'s.
Then the sequence $\{z'_n\}$ has an accumulation point in $D$.
That contradicts (2) and (3).

Therefore, taking a subsequence one can assume $\{k_n\}$ increase.
This contradicts (4).
\end{proof}

\begin{lemma} The images of $\left(W\backslash\{|p(y)-x|\geq
a|\alpha|\}\right)\cup\{|y|<\epsilon\}$ under the map $f_a$ are
disjoint.
\end{lemma}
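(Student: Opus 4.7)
The strategy is to verify the disjointness hypothesis~(1) of the preceding abstract lemma, with $D:=\left(W\cap\{|p(y)-x|<|a|\alpha\}\right)\cup\{|y|<\epsilon\}$ playing the role of the ``collar'' along which $f_a$ pairs boundary pieces of the candidate fundamental domain $\Omega\cup\hat{\Omega}_1$. I would split $D=D_1\cup D_2$ with $D_1=W\cap\{|p(y)-x|<|a|\alpha\}$ the parabolic sleeve in $W$ and $D_2=\{|y|<\epsilon\}$ the thin slab, and compute explicitly: for $(x,y)\in D_2$, $f_a(x,y)=(p(x)-ay,x)=:(u,v)$ satisfies $u-p(v)=-ay$, so $|u-p(v)|<|a|\epsilon$; symmetrically $f_a^{-1}(u,v)=(v,(p(v)-u)/a)$ sends the $|p(v)-u|<|a|\alpha$ tube $D_1$ into the slab $\{|y|<\alpha\}$. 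Choosing $\epsilon<\alpha$, this captures exactly the $f_a$-pairing of the $|y|=\alpha$ boundary of $\Omega$ with the $|y|=\epsilon$ boundary of $\hat{\Omega}_1$, and of the two sides of the parabolic cut $|p(y)-x|=|a|\alpha$.

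I would dispose of the two ``easy'' escape regimes using Lemma~\ref{V+invariance} together with the functional equations $G_a^+\circ f_a=2G_a^+$ and $(G_a^--\log|a|)\circ f_a^{-1}=2(G_a^--\log|a|)$. Once an orbit enters $V_+$ it stays there, $G_a^+$ doubles at each step, and the H\'enon recursion $y_{n+1}=x_n$ forces $|y_{n+1}|=|x_n|>\alpha>\epsilon$; the orbit thus cannot re-enter $D_2$, and being in $V_+$ it is also disjoint from $D_1\subset W$. A symmetric argument handles backward iterates through $V_-$, using that $D_2\cap V_-=\emptyset$ since $V_-\subset\{|y|>\alpha\}$.

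The main obstacle is the residual regime in which an orbit lingers in $W$ for several steps and might revisit $D\cap W$. Here one chains the recursion $y_{n+1}=x_n$ with the thin-tube constraint $|p(y_n)-x_n|<|a|\alpha$ at two different times; combined with the escape estimate~(\ref{2}) this forces the intermediate $|y_n|$ out of $[-\alpha,\alpha]$ between any two putative visits, so the orbit must have passed through $V_+$, contradicting bounded-$G_a^+$ containment in $D$. Alternatively, one can invoke Lemma~\ref{lemma-ext-a-neighborhood} to see that, on the critical locus, $D_1$ is exactly identified with the $f_a$-image of the $D_2$-piece, so repeated returns would produce repeated $G_a^+$-levels on a single orbit. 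Pinning down the correct choice of $\epsilon$ relative to $\alpha$ and $|a|$ so that the tube containments of the first paragraph really do shrink past each other under iteration is the delicate technical point, and is what justifies the implicit smallness hypothesis on $\delta$.
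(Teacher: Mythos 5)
The lemma statement contains a sign typo: $W\setminus\{|p(y)-x|\geq|a|\alpha\}$ should read $W\setminus\{|p(y)-x|\leq|a|\alpha\}$, as is clear both from item (1) of the paper's own proof and from the subsequent lemma, which uses the inclusion $\Omega\cup\hat{\Omega}_1\subset\left(W\setminus\{|p(y)-x|\leq|a|\alpha\}\right)\cup\{|y|\leq\epsilon\}$. You took the literal reading, so your $D_1$ is the thin parabolic tube $W\cap\{|p(y)-x|<|a|\alpha\}$, which is the \emph{complement} of the intended set inside $W$. With that reading the claim is actually false; for instance with $p(y)=y^2-4$ and $\alpha=3$, the point $(2,\sqrt6)\in W$ lies on $C(p)$ and $f_a(2,\sqrt6)=(-a\sqrt6,\,2)$ again lies in $W$ at distance $|a|\sqrt6<|a|\alpha$ from $C(p)$, so the tube meets its own $f_a$-image. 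The delicate tuning of $\epsilon$, $\delta$ you defer to at the end therefore cannot close this gap.

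Even after correcting the set, the heart of the matter is missing from your proposal. The key observation is a one-line computation: if $(x,y)\in W$ and $(u,v)=f_a(x,y)=(p(x)-ay,\,x)\in W$, then $p(v)-u=ay$, hence $|p(v)-u|\leq|a|\alpha$. Combined with $f_a(W)\subset W\cup V_+$ and $f_a(V_+)\subset V_+$ (so any orbit re-entering $W$ must have stayed in $W$ throughout), this gives $f_a^n(W)\cap W\subset f_a(W)\cap W\subset\{|p(y)-x|\leq|a|\alpha\}$, which is disjoint from $W\cap\{|p(y)-x|>|a|\alpha\}$. That immediately dispatches the ``residual regime in which an orbit lingers in $W$'' that you flag as the main obstacle; your sketch of chaining the tube constraint at two different times does not produce a working argument and is in fact not needed. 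Your handling of orbits that enter $V_+$ is along the paper's lines, but note the lemma concerns only forward images, so the symmetric discussion of $V_-$, the functional equation for $G_a^-$, and the appeal to Lemma~\ref{lemma-ext-a-neighborhood} are all superfluous.
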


\begin{proof}
\begin{enumerate}
\item $$f_a^n(W\backslash \{|p(y)-x|\leq |a|\alpha\})\cap \left[W\backslash \{|p(y)-x|\leq
a|\alpha|\}\right]=\emptyset.$$

$f_a(W)\subset W\cup V_+$, $f_a(V_+)\subset V_+$, therefore,
$\left(f_a^n(W)\cap W\right)\subset \left(f_a(W)\cap
W\right)\subset \{|p(y)-x|\leq |a|\alpha\}$

\item $$f_a^n(W)\cap \{|y|<\epsilon\}\cap \{|x|\geq \alpha\}=\emptyset;$$

Let $(x,y)\in W$. Suppose $f_a^n(x,y)\in V_+$ and $n$ is the
smallest number such that this happens. Then $|p(x_n)-y_n|\leq
|a|\alpha$. Thus, $|y_n|>\epsilon$. For points in $V_+$,
$|y_{n+1}|> |y_n|$.

\item $$f_a^n(\{|y|<\epsilon\}\cap \{|x|>\alpha\})\cap \{|y|<\epsilon\}\cap \{|x|>\alpha\}=\emptyset.$$

Suppose $(x,y)\in \{|y|<\epsilon\}\cap \{|x|>\alpha\}$. Since
$|y|<\epsilon$, $f_a(x,y)$ belongs to $|a|\epsilon$-neighborhood
of parabola. Since $(x,y)\in V_+$, $f_a(x,y)\in V_+$. Therefore,
$|y_1|>\epsilon$. $|y_{n+1}|>|y_n|>\epsilon$, since $(x_n,y_n)\in
V_+$.

\item $$f_a^n(\{|y|<\epsilon\}\cap \{|x|>\alpha\})\cap
W=\emptyset.$$

This is true, since $\{|y|<\epsilon\}\cap \{|x|>\alpha\}\subset
V_+$, and $f_a(V_+)\subset V_+$.
\end{enumerate}

\end{proof}

\begin{lemma} There exists $\delta$ such that for all $|a|<\delta$ the critical locus in $\Omega\cup \hat{\Omega}_1$
forms a fundamental domain of ${\cal C}_a$.
\end{lemma}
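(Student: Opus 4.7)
The plan is to apply the abstract polyhedron Lemma \ref{abstract_lemma} to $C = \mathcal{C}_a$, $D = \mathcal{C}_a \cap (\Omega \cup \hat{\Omega}_1)$, and $f = f_a|_{\mathcal{C}_a}$, verifying each of its four hypotheses. Condition (1) is immediate: since the choice of $\alpha$ from Section \ref{bottcher_coordinates} forces $|x| \leq \alpha$ on $\{G_a^+ \leq r\} \cap \{|y| \leq \alpha\}$ (so $\Omega \subset W$), we have $\Omega \cup \hat{\Omega}_1 \subset (W \setminus \{|p(y)-x| < |a|\alpha\}) \cup \{|y| < \epsilon\}$, and the preceding disjointness-of-iterates lemma gives $f_a^n(D) \cap D = \emptyset$ for all nonzero $n$. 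Condition (2) is read off from Lemma \ref{description-omega} together with the $\hat{\Omega}_1$ lemma at the start of this section: every $\{G_a^+ = r/2^n\}$-circle ($n \geq 0$) is \emph{interior} to $D$, because the outer $D_2$-boundary in $\Omega_a^{\xi_n}$ coincides with one of the two $D_2$-holes of $\Omega_a^{\xi_{n-1}}$, and for $n=0$ it glues to the outer boundary of the $\hat{\Omega}_1$-piece. What remains on $\partial D$ is a countable union of smooth circles lying on $\{|y|=\alpha\}$ and on $\{|p(y)-x|=|a|\alpha\}$, plus the puncture at $(\infty,0)$.

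For condition (3) I exhibit the pairing by $f_a$ explicitly. The elementary identity $p(y')-x' = p(x) - (p(x) - ay) = ay$, where $(x',y') = f_a(x,y)$, shows that $f_a$ sends $\mathcal{C}_a \cap \{|y|=\alpha\}$ bijectively onto $\mathcal{C}_a \cap \{|p(y')-x'|=|a|\alpha\}$. For each $n \geq 1$, the outer $D_1$-boundary of $\Omega_a^{\xi_n}$ is carried by $f_a$ into the $D_1$-hole of $\Omega_a^{\xi_{n-1}}$ centered at $y' = \xi_n$, the two $D_1$-holes of $\Omega_a^{\xi_{n-1}}$ being naturally indexed by the two $p$-preimages of $\xi_{n-1}$; for $n=0$, the image lands in $\hat{\Omega}_1$ (since $G_a^+$ doubles past $r$) and pairs with its unique $\{|p(y)-x|=|a|\alpha\}$-hole. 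A binary count $2^n = 2 \cdot 2^{n-1}$ confirms that every boundary circle on each side is matched.

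Condition (4) uses the functional equations $G_a^+ \circ f_a = 2G_a^+$ and $G_a^- \circ f_a = \tfrac{1}{2}(G_a^- + \log|a|)$. For $z_n \in D$ and $k_n \to +\infty$: if $G_a^+(z_n)$ stays bounded below, then $G_a^+(f_a^{k_n} z_n) \to \infty$ and the image escapes in $\mathbb{C}^2$; if $G_a^+(z_n) \to 0$, then $z_n$ eventually lies in $\Omega$ (not in $\hat{\Omega}_1$, where $G_a^+ \geq r$), where $G_a^-$ is bounded above, and the second functional equation yields $G_a^-(f_a^{k_n} z_n) \to \log|a| < 0$, placing the would-be limit outside $U_a^-$ and hence outside $\mathcal{C}_a$. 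The case $k_n \to -\infty$ is symmetric with $G_a^+$ and $G_a^-$ interchanged. Finally, Lemma \ref{lem2} guarantees that each component of $\mathcal{C}_a$ has an $f_a$-iterate meeting $\Omega \subset D$, so the injection $i: S \hookrightarrow \mathcal{C}_a$ produced by the abstract lemma is also surjective, and $D$ is a genuine fundamental domain. I expect the main delicacy to be the bookkeeping for condition (3), where each $|y|=\alpha$ circle must be matched with the $|p(y)-x|=|a|\alpha$ hole indexed by the correct $p$-preimage branch.
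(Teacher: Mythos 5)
Your proposal is correct and follows the same structure as the paper's proof: reduce to Lemma \ref{abstract_lemma}, verify its four hypotheses using the earlier disjointness-of-iterates lemma for (1), Lemma \ref{description-omega} and the $\hat\Omega_1$ lemma for (2)--(3), and the Green functional equations for (4), then invoke Lemma \ref{lem2} for surjectivity. You are in fact somewhat more explicit than the paper in two places where it is terse --- the identity $p(y')-x'=ay$ exhibiting the pairing for (3), and the functional-equation case split for (4) --- and you correctly make explicit the final appeal to Lemma \ref{lem2} that the paper leaves implicit.
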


\begin{proof}

Denote by $D={\cal C}_a\cap \left(\Omega\cup
\hat{\Omega}_1\right)$. Let us check that the conditions of the
Lemma \ref{abstract_lemma} are satisfied. Since $\Omega\cup
\hat{\Omega}\subset \left(W\backslash \{|p(y)-x|\leq
|a|\alpha\}\right)\cup \{|y|\leq \epsilon\}$. All the forward
images of $D$ are disjoint from it. Therefore, (1) is satisfied.

The boundary of $D$ in each $\Omega_a^{\xi_k}$ consists of
vertical circles: $|y|=\alpha$ and $|p(y)-x|=|a|\alpha$. The
circles $|p(y)-x|=|a|\alpha$ are parametrized by $\xi'_{k+1}$,
$\xi''_{k+1}$,($p(\xi'_{k+1})=p(\xi''_{k+1})=\xi_k$), which stands
for the approximate value of $y$ on the circle.

There is also one horizontal circle $|p(y)-x|=|a|\alpha$ on a
perturbation of $y=0$.

$f_a$ maps $|y|=\alpha$ in $\Omega_a^{\xi_k}$ to
$|p(y)-x|=|a|\alpha$ in $\Omega_a^{p(\xi_k)}$, parametrized by
$\xi_k$

$f_a$ maps $|y|=\alpha$ on perturbation of $x=0$ to a horizontal
circle $|p(y)-x|=|a|\alpha$.

Therefore, boundary components of $D$ are being paired by $f_a$.
And condition (2) and (3) of Lemma \ref{abstract_lemma} are
satisfied.

Suppose there exists a sequence of $z_n\in D$ so that
$f_a^n(z_n)\to\partial S,$ $z_n\to z_*\in C$.

If $\{z_n\}$ has an accumulation point $z$ in $D$. Then
$f_a^{n}(z)\to z^*$. Which is impossible, since $f_a^{n}(z)\to
\infty$ in $\mathbb C^2$.

If $\{z_n\}$ does not have an accumulation point $z$ in $D$. Then
$z_n$ accumulate to $z\in J_a^+$. Therefore, $f_a^{k_n}(z')\to
z\in J_a$. That contradicts to $z\in C$.

So condition (4) is satisfied as well. Therefore, $D$ is a
fundamental domain of the critical locus ${\cal C}_a$.
\end{proof}

\begin{proof}[\it Proof of theorem \ref{theorem_main}.]
To obtain a description in terms of truncated spheres we do a
dynamical regluing. We fix some small $\epsilon$ and cut the
fundamental domain of ${\cal C}_a$ along the hypersurface
$|y|=\epsilon$. We call the connected component of the
perturbation of $y=0$ main component. The rest of components we
call handles: $H_{\xi_k}$ is a component in $\Omega_{\xi_k}$

The boundary of $H_{\xi_k}$ consists of four circles:

$|y|=\alpha,$ $|y|=\epsilon$ and two connected components of
$|p(y)-x|=|a|\alpha$, parametrized as previously by $\xi'_{k+1}$,
$\xi''_{k+1}$, where $p(\xi'_{k+1})=p(\xi''_{k+1})=\xi_k$.

We glue $H_{\xi_k}$ to the main component by the map $f_a^k$.
Under this procedure the boundary $|y|=\alpha$ of $H_{\xi_k}$ is
being glued to $|p(y)-x|=|a|\alpha$-boundary of $H_{p(\xi_k)}$,
parametrized by $\xi_k$. By ``generalized uniformization theorem",
it can be straighten to be a sphere.

The fundamental domain of the critical locus ${\cal C}_a$,
obtained after regluing, is a truncated sphere.

The preimages of $0$ under the map $p^k$ are parametrized by
$k$-strings of $0$ and $1$'s.

Let $\alpha_k$ be a $k$-string that parametrizes $\xi_k$.
$V_{\alpha_k}$ is the interior of $|y|=\epsilon$ obtained by
cutting $H_{\xi_k}$ from the main component. $U_{\alpha_k}$ is the
interior of $f_a^k (|y|=\epsilon)$ on $f_a^k(H_{\xi_k})$. The rest
of the boundary corresponds to $G_a^+=0$ and $G_a^-=0$. Therefore,
it is parametrized by two Cantor sets $\Sigma, \Omega$.

We show these are true Cantor sets by moduli counting.

\begin{lemma} Let $U_1\supset U_2\supset \dots U_n\supset \dots$
be a sequence of open domains, such that $U_i\backslash U_{i-1}$
is an annulus with moduli $M_i\geq M$. Then $\bigcap \bar{U}_i$ is
a point.
\end{lemma}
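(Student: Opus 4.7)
The plan is to argue by contradiction using Grötzsch's inequality together with a classical upper bound for the modulus of a condenser whose inner plate contains two distinct points. Set $K := \bigcap_i \bar{U}_i$, a nonempty nested intersection of closed sets, and suppose for contradiction that $K$ contains two distinct points $p$ and $q$. The goal is to use the lower bound $M_i \geq M$ infinitely often to force the condenser modulus of $(U_1, K)$ to be infinite, contradicting the two-point separation.

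First I would set $A_i$ to be the annular region lying between $\bar{U}_{i+1}$ and $U_i$ (reading the statement with the inclusion $U_{i+1}\subset U_i$ in mind), so that by hypothesis $\mathrm{mod}(A_i) = M_i \geq M$. The strict nesting $U_i \supset U_{i+1}$ makes the $A_i$ pairwise disjoint, and each $A_i$ separates $K$ (hence the pair $\{p,q\}$) from $\partial U_1$. Applying Grötzsch's inequality, in the form of super-additivity of modulus for disjoint essential sub-annuli of a common annular region, yields for every $N$
\[
\sum_{i=1}^{N} M_i \;\leq\; \mathrm{mod}(U_1, K),
\]
where $\mathrm{mod}(U_1, K)$ denotes the modulus of the condenser with outer plate $\partial U_1$ and inner plate $K$. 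Letting $N \to \infty$ forces $\mathrm{mod}(U_1, K) = +\infty$.

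Next I would invoke the classical Teichmüller-type estimate: because $K$ contains the two distinct points $p,q$, the extremal length of the family of curves in $U_1$ separating $K$ from $\partial U_1$ is bounded below by a positive constant depending only on $|p-q|$ and $\mathrm{diam}(U_1)$. Equivalently, $\mathrm{mod}(U_1, K)$ is bounded above by a finite constant. This contradicts the divergence obtained above, so $K$ cannot contain two distinct points, and therefore $K$ is a single point.

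The main obstacle is putting Grötzsch's inequality in the correct form for the ambient region, which is not literally an annulus when $K$ is merely assumed to be a compact intersection (and may be disconnected). I plan to handle this by framing the argument in terms of the condenser $(U_1, K)$ and extremal length of its separating curve family, rather than in terms of a single ambient annulus: each $A_i$ is essential for this curve family because it separates the compact set $K$ from the boundary $\partial U_1$, and disjoint essential annuli always satisfy the super-additivity bound for condenser modulus. Once this formulation is adopted, the Teichmüller bound is a standard fact and the contradiction is immediate.
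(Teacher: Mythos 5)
The paper does not actually prove this lemma; it is stated without proof inside the proof of Theorem \ref{theorem_main}, as a standard fact that follows from the Gr\"otzsch inequality, and is applied immediately to the nested thickenings of the level sets of $G_a^\pm$. Your proposal supplies precisely the classical argument the paper is leaning on, so there is nothing to compare against in the paper itself: you give a correct proof of an omitted step. You also correctly repaired the typo in the hypothesis --- given the nesting $U_1\supset U_2\supset\cdots$, the set $U_i\setminus U_{i-1}$ is empty, and the intended annulus is $U_{i-1}\setminus\bar U_i$. One small simplification is available: since each $U_i$ is a domain, each $\bar U_i$ is a continuum, so $K=\bigcap\bar U_i$ is a nested intersection of compact connected sets and hence itself a continuum. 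The caution you express about $K$ possibly being disconnected is therefore unnecessary; once $K$ contains two points, $U_1\setminus K$ is an honest ring domain, each $A_i$ is an essential sub-annulus of it, and the plain Gr\"otzsch super-additivity for nested annuli gives $\sum M_i\le\mathrm{mod}(U_1\setminus K)$ directly, with the finite Teichm\"uller upper bound producing the contradiction --- no need for the more general condenser/extremal-length reformulation, though it is of course also valid.
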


Take a point $\sigma\in \Sigma$. Let $M_1$ be a modulus of the
annulus $\{r-\lambda\leq G_a^+\leq r\}$ on $y=0$. Then $M_1$ is a
modulus of the annulus $\{\frac{r-\lambda}{2^n}\leq G_a^+\leq
\frac{r}{2^n}\}$. The main component project one-to-one to each of
this annulus. There is a sequence of connected component
$\{\frac{r-\lambda}{2^n}\leq G_a^+\leq \frac{r}{2^n}\}$ that bound
a hole parametrized by $\sigma$. Therefore, $\sigma$ is a point.
$\Sigma$ is a Cantor set.

Fix some small $\epsilon$. Let $M_1$ be a modulus of the annulus
$\{\alpha-\epsilon\leq|y|\leq\alpha\}$. All handles $H_{\xi_k}$
project one-to-one to this annulus. So by the same argument we get
that $\Omega$ is a Cantor set.
\end{proof}

\newpage

\section{List of standard notations}
We provide a list of notations here as a reference.\medskip

\begin{center}
\begin{tabular}{|l|l|p{11cm}|}\hline

{\bf Notation} & {\bf Section} & {\bf Meaning}\\ \hline

$f_a$ & \ref{introduction}& H\'{e}non mapping under
consideration\\ \hline

$a$ & \ref{introduction} & Jacobian of H\'{e}non mapping under
consideration\\ \hline

$p(x)$ & \ref{introduction} & polynomial used in the definition of
H\'{e}non mapping \\ \hline

$U_a^+, U_a^-$ & \ref{introduction} & set of points whose orbits
under forward (backward) iteration of $f_a$ escape to infinity\\
\hline

$K_a^+, K_a^-$ & \ref{introduction} & set of points whose orbits
under forward (backward) iteration of $f_a$ remain bounded\\
\hline

$J_a^+, J_a^-$ & \ref{introduction} & boundaries of $K_a^+,
K_a^-$\\
\hline

$J_a$ & \ref{introduction} & $J_a^+\cap J_a^-$\\
\hline

$G_a^+, G_a^-$ & \ref{introduction} & pluriharmonic functions that
measure the rate of escape to infinity under forward (backward)
iterates of $f_a$\\
\hline

$\alpha$ & \ref{bottcher_coordinates} & parameter used in the
definition of $V_+$, $V_-$\\
\hline

$V_+, V_-$ & \ref{bottcher_coordinates} & $\{|x|> \alpha, |x|>
|y|\}$, $\{|y|> \alpha, |y|> |x|\}$ -- regions which
describe the large scale behavior of the H\'{e}non map\\
\hline

$W$ & \ref{bottcher_coordinates} & $\{|x|\leq\alpha, |y|\leq\alpha\}$\\
\hline

$D_R$ & \ref{bottcher_coordinates} &  the disk of radius $R$ in
the parameter space, used to define $V_+$ and $V_-$\\
\hline

$\phi_{a,+}, \phi_{a,-}$ & \ref{bottcher_coordinates} &
holomorphic functions that semiconjugate dynamics in $V_+, V_-$ to
$z\to z^2$,
$z\to z^2/a$\\
\hline

$s_k^+, s_k^-$ & \ref{bottcher_coordinates} & auxillary functions,
used to study $\phi_{a,+}$ and $\phi_{a,-}$\\
\hline

$C(p)$ & \ref{bottcher_coordinates} & the curve $y=p(x)$, this is $J_0^-$\\
\hline

$G_p$ & \ref{strategy} & Green function for the map
$x\to p(x)$\\
\hline

$b_p$ & \ref{bottcher_coordinates} & B\"{o}ttcher coordinate for the map $x\to p(x)$\\
\hline

${\cal F}_{a}^+, {\cal F}_a^-$ & \ref{introduction} & foliations of $U_a^+$, $U_a^-$\\
\hline

${\cal C}_a$ & \ref{introduction}& the critical locus, the set of
tangencies between foliations ${\cal F}_a^+$ and ${\cal F}_a^-$\\
\hline


$(x_n,y_n)$ & \ref{bottcher_coordinates} & $(x_n,y_n)=f_a(x,y)$\\
\hline

${\cal D}_{k,+}$ & \ref{bottcher_coordinates} & the domain of
definition of $\phi_{a,+}^k$\\
\hline

${\cal D}_{k,-}$ & \ref{bottcher_coordinates} & the domain of
definition of $\phi_{a,-}^k$\\

\hline

$u$ && $u=p(y)-x$, measures the distance from a point $(x,y)$ to $C(p)$\\
\hline

$\Omega_a$ & \ref{strategy} & $\{G_a^+\leq r\}\cap\{|y|\leq
\alpha\}\cap \{|p(y)-x|<\alpha|a|\}$, the domain that does not
intersect with its images under $f_a$ and $f^{-1}_a$ \\
\hline

$\hat{\Omega}_1$ & \ref{final} & $\{(x,y)\in \mathbb C\mathbb
P^2|\quad |y|<\epsilon, |x|\geq \alpha\}$ \\\hline
\end{tabular}
\end{center}

\newpage

\end{document}